\newtheorem{theorem}{Theorem}[section]
\newtheorem{lemma}[theorem]{Lemma}
\newtheorem{remark}[theorem]{Remark}
\newtheorem{proposition}[theorem]{Proposition}
\title{On the percolative properties of the intersection of two independent interlacements}
\author{Zijie Zhuang \thanks{Schools of Mathematical Sciences, Peking University, Beijing, China, 100871, E-mail: zzj19981029@pku.edu.cn.}}
\date{October 2020}
\newcommand{\RN}[1]{%
  \textup{\uppercase\expandafter{\romannumeral#1}}%
}
\numberwithin{equation}{section}
\begin{document}

\maketitle

\begin{abstract}
    We prove the existence of non-trivial phase transitions for the intersection of two independent random interlacements and the complement of the intersection. Some asymptotic results about the phase curves are also obtained. Moreover, we show that at least one of these two sets percolates in high dimensions. 
\end{abstract}
\section{Introduction}

The model of random interlacements was first introduced by Sznitman in \cite{unique K} to clarify the local structure left by a simple random walk on a discrete torus running up to some time proportional to its volume. It has interesting percolative and geometric properties, and a lot of research has been done in this field, e.g., \cite{quenched invariance}, \cite{further invariance}, \cite{percolation for vacant set} and \cite{unique K}. 

More precisely, random interlacements are a Poisson point process whose ``points'' are doubly-infinite trajectories on $\mathbb{Z}^d$ ($d \geq 3$), with the intensity measure governed by a parameter $u>0$. We let $\mathcal{I}^u$ denote the set of vertices visited by at least one of these trajectories and call it the {\it interlacement set at level $u$}. We let $\mathcal{V}^u$ denote the complement of $\mathcal{I}^u$ and call it the {\it vacant set at level $u$}. We refer to Section \ref{section_2} for precise definitions. 

In this article, we will consider two independent interlacements $\mathcal{I}^{u_1}_1$, $\mathcal{I}^{u_2}_2$ with intensity parameters $u_1$, $u_2$, and their vacant sets $\mathcal{V}^{u_1}_1$, $\mathcal{V}^{u_2}_2$. Let $\mathcal{K}^{u_1,u_2}=\mathcal{I}^{u_1}_1 \cap \mathcal{I}^{u_2}_2$ be their intersection and $\mathcal{V}^{u_1,u_2}= \mathcal{V}^{u_1}_1\cup \mathcal{V}^{u_2}_2$ be the complement of the intersection. Superscripts will be omitted whenever no ambiguity arises. 

We now present our main results on the percolative properties of the intersection and its complement. First, both $\mathcal{K}$ and $\mathcal{V}$ have at most one infinite connected component and undergo a non-trivial phase transition in $u_1$ and $u_2$. We also obtain some results about the asymptotic behavior of the phase curves. The phase curve of $\mathcal{V}$ will tend to the lines $x=u^+$ and $y=u^+$, where $u^+$ is a parameter between the two percolative thresholds of interlacements $u_*$ and $u_{**}$ defined in Section \ref{section_2} (see Figure 1). The phase curve of $\mathcal{K}$ will tend to $x$-axis and also $y$-axis (see Figure 2).\footnote{Currently, we do not know whether this curve hits the coordinate axes for $d=3, 4$, see Claims $(2)$ and $(4)$ of Theorem \ref{1.2}.}

\begin{figure}[h]
\centering
\includegraphics[scale=0.73]{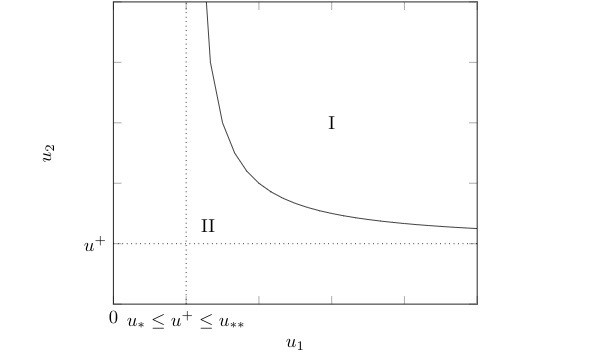}
\caption{(Phase diagram of $\mathcal{V}$.) Region \RN{1}: $\mathcal{V}$ does not percolate. Region \RN{2}: $\mathcal{V}$ percolates.}
\end{figure}

\begin{figure}[h]
\centering
\includegraphics[scale=0.73]{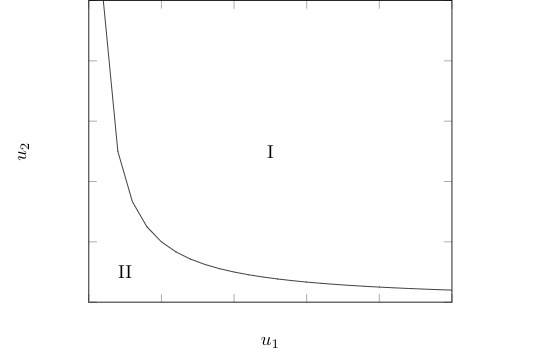}
\caption{(Phase diagram of $\mathcal{K}$.) Region \RN{1}: $\mathcal{K}$ percolates. Region \RN{2}: $\mathcal{K}$ does not percolate.}
\end{figure}

We also research the phase graph of $\mathcal{K}$ and $\mathcal{V}$ put together and consider the questions whether there is a phase where two infinite components coexist and whether there is a phase where neither of them exists. It follows from the above asymptotic analysis that there exists a certain region such that both $\mathcal{K}$ and $\mathcal{V}$ percolate. The second question is hard and depends on the dimension, e.g., Bernoulli site percolation on $\mathbb{Z}^d$. In low dimensions, it might be the case that the occupied vertices and vacant vertices wrap each other. We claim that in high dimensions at least one of $\mathcal{K}$ and $\mathcal{V}$ percolates through showing that the phase curve of $\mathcal{K}$ lies below $\{(x,y):x \geq 1,y \geq 1 \}$. In this case, the phase graph of $\mathcal{K}$ and $\mathcal{V}$ put together is as follows (Figure 3).

\begin{figure}[H]
\centering
\includegraphics[scale=0.5]{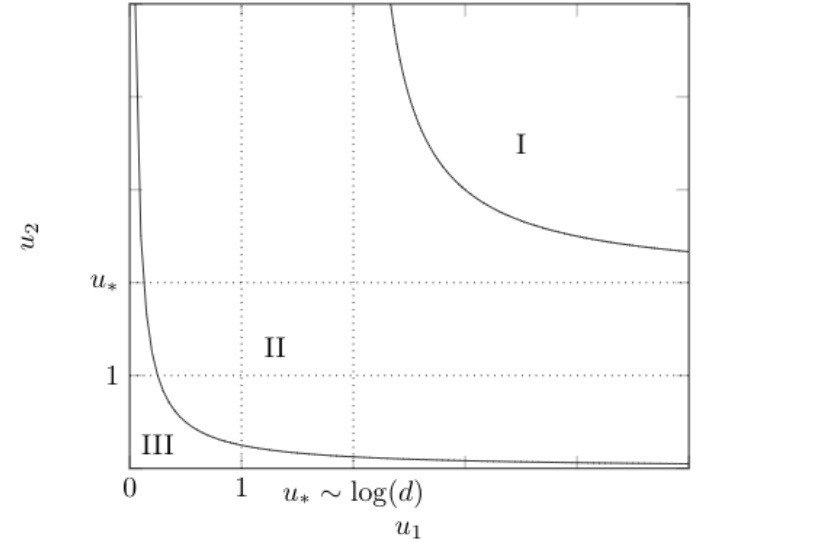}
\caption{(Phase diagram of $\mathcal{K}$ and $\mathcal{V}$ in high dimensions.) Region \RN{1}: $\mathcal{K}$ percolates but $\mathcal{V}$ does not percolate. Region \RN{2}: both $\mathcal{K}$ and $\mathcal{V}$ percolate. Region \RN{3}: $\mathcal{K}$ does not percolate but $\mathcal{V}$ percolates.}
\end{figure}

The motivation of this article comes from the study of random walks. This intersection is a good approximation of the intersection of two independent random walks on a torus running up to some time proportional to its volume, which has similar properties to the intersection of two independent simple random walks on $\mathbb{Z}^d$ conditional on intersecting many times \cite{large deviation intersect}, \cite{wiener sausages}. This work is the starting point of the study on the disconnection problem (see \cite{disconnection1}, \cite{disconnection2}) of the intersection of two independent random walks. 

Next, we will state our results rigorously and briefly explain the main ideas after each theorem. Let $B(x,r)$ be the $l^{\infty}$ ball centered at $x$ and of radius $r$. For a finite subset $K$ of $\mathbb{Z}^d$, let $\partial_i K$ be its inner boundary. First, we present the result about the percolative properties of $\mathcal{V}$.

\begin{theorem}[Percolative properties of $\mathcal{V}$]
\label{1.1}
\leavevmode
\begin{itemize}
\item[{\rm (1).}] The set $\mathcal{V}$ contains at most one infinite component a.s.
\item[{\rm (2).}]When $u_1<u_*$ or $u_2<u_*$, there is a.s.\ a unique infinite component in $\mathcal{V}$.
\item[{\rm (3).}] Given $u_1>u_{**}$, there exists $C=C(u_1,d)$ such that for all $u_2>C$, there are a.s.\ no infinite components in $\mathcal{V}$.
\item[{\rm (4).}] There exists a constant $u^+ \in [u_*, u_{**}]$ and a decreasing function $\Gamma:[u^+, + \infty) \rightarrow [u^+, + \infty]$ $($only $\Gamma(u^+)$ can be $+\infty)$ \footnote{Currently, we do not know whether $\Gamma$ is continuous.} such that $\mathcal{V}$ a.s.\ has a unique infinite component when \begin{itemize} 
\item[i.] $u_1<u^+$;
\item[ii.] $u_1\geq u^+$ and $u_2<\Gamma(u_1)$, \end{itemize}
and $\mathcal{V}$ a.s.\ has no infinite components when $u_1 \geq u^+$ and $u_2>\Gamma(u_1)$. 
\end{itemize}
\end{theorem}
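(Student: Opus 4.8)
The plan is to establish the four claims roughly in the order stated, building on standard tools from the theory of random interlacements (the FKG inequality, ergodicity under lattice translations, and the sprinkling/decoupling inequalities of Sznitman and Popov–Teixeira), together with a monotone-coupling argument in $u_1,u_2$.

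Claim (1) is the uniqueness of the infinite component. First I would check that the law of $\mathcal{V}^{u_1,u_2}$ is invariant and ergodic under the translations of $\mathbb{Z}^d$: each $\mathcal{V}^{u_i}_i$ is translation-invariant and the translation action on random interlacements is ergodic (even mixing), so the product is ergodic, and $\mathcal{V}$ is a measurable factor of the pair. Next I would verify the FKG (positive-association) property for $\mathcal{V}$: the indicator functions $\{x \in \mathcal{V}^{u_i}_i\}$ are decreasing in the interlacement configuration, which enjoys the FKG inequality, so $\{x \in \mathcal{V}^{u_1}_1\} \cup \{x\in \mathcal{V}^{u_2}_2\}$-events... more carefully, $\{x\in\mathcal V\}$ is a decreasing event in each coordinate, hence increasing events in $\mathcal V$ (unions of connection events) are positively correlated. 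With translation-invariance, ergodicity, and finite-energy-type insertion tolerance (flipping the status of a single vertex can be achieved by adding one short trajectory, which has positive conditional density), the Burton–Keane argument applies verbatim to rule out two or more infinite clusters. I would spell out the insertion-tolerance step with some care, since $\mathcal{V}$ is not a product measure over vertices.

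Claim (2): if $u_1<u_*$ then $\mathcal{V}^{u_1}_1$ already contains a unique infinite component by definition of $u_*$, and since $\mathcal{V}^{u_1}_1\subseteq\mathcal{V}$, the set $\mathcal V$ contains an infinite component; combined with (1) this is the unique one. The symmetric statement handles $u_2<u_*$.

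Claim (3): this is the genuinely substantive direction — showing $\mathcal V$ can fail to percolate. The strategy is a static renormalization / multi-scale argument. Fix $u_1>u_{**}$. At scale $L_0$, call a box \emph{good} for interlacement 1 if $\mathcal{I}^{u_1}_1$ disconnects the box from the boundary of the doubled box; because $u_1>u_{**}$, such a disconnection event has probability tending to $1$ as $L_0\to\infty$ (this is essentially the definition of $u_{**}$, or follows from the exponential decay of the radius of the vacant cluster above $u_{**}$). On such a box, in order for $\mathcal V=\mathcal V_1\cup\mathcal V_2$ to connect across, the piece of $\mathcal V_2$ inside must by itself cross the interlacement-1 "moat". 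Now choose $u_2=C$ large: by the same disconnection estimate for $\mathcal I^{u_2}_2$, or more simply because $P[x\in\mathcal V^{u_2}_2]=e^{-u_2/g(0)}\to 0$, the probability that $\mathcal V_2$ crosses a fixed box of scale comparable to $L_0$ is small; take $C$ large enough (depending on $u_1,d$ through $L_0$) that this probability is below the percolation threshold of the renormalized process. Then feed this into a Peierls-type argument on the renormalized lattice, using the decoupling inequalities (Sznitman's for $\mathcal I_1$, and either independence or decoupling for $\mathcal I_2$) to control the long-range dependence so that bad events at well-separated boxes are nearly independent. Conclude that a.s.\ there is no infinite crossing cluster, hence no infinite component in $\mathcal V$.

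Claim (4): assemble (2) and (3) into the phase diagram by monotonicity. I would first fix the monotone coupling: increasing $u_1$ or $u_2$ enlarges $\mathcal I_i$ and shrinks $\mathcal V$, so the event $\{\mathcal V\text{ percolates}\}$ is decreasing in each of $u_1,u_2$; by Kolmogorov's zero–one law (or ergodicity) its probability is $0$ or $1$. Define $\Gamma(u_1)=\sup\{u_2:\ \mathcal V^{u_1,u_2}\text{ percolates a.s.}\}$ (with $\Gamma(u_1)=0$ if the set is empty). Monotonicity in $u_1$ makes $\Gamma$ non-increasing. Claim (2) gives $\Gamma(u_1)=+\infty$ for $u_1<u_*$ (and symmetrically, the curve passes above the line $u_2<u_*$), while Claim (3) gives $\Gamma(u_1)<\infty$ for $u_1>u_{**}$. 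Set $u^+=\sup\{u_1:\Gamma(u_1)=+\infty\}$; then $u^+\in[u_*,u_{**}]$. That $\Gamma(u^+)$ may still be $+\infty$ but $\Gamma$ is finite to its right, and that $\Gamma$ maps into $[u^+,+\infty]$ (i.e.\ the curve does not drop below $u^+$) follows by applying the symmetric version of the argument with the roles of $u_1,u_2$ exchanged: if $u_2<u^+$ then by symmetry $\mathcal V$ percolates for all $u_1$, so $\Gamma(u_1)\ge u^+$ for every $u_1$. Finally, the borderline statements "percolates when $u_2<\Gamma(u_1)$, does not when $u_2>\Gamma(u_1)$" are immediate from the definition of $\Gamma$ as a supremum together with the $0$–$1$ law.

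The main obstacle is Claim (3): making the renormalization rigorous requires the right decoupling inequality for the \emph{union} $\mathcal V_1\cup\mathcal V_2$ at large scales. The cleanest route is to handle the two interlacements separately — use Sznitman's decoupling for $\mathcal I_1$ to push $u_1$ slightly above $u_{**}$ into a regime of stretched-exponential disconnection, and independently control $\mathcal V_2$ on a single renormalized box — but one must be careful that the "good box for $\mathcal I_1$" events and the "$\mathcal V_2$ does not cross" events at distant boxes can be simultaneously decoupled, which needs the two one-sided decoupling inequalities to be applied on a common hierarchy of scales. Quantifying the dependence of $C$ on $u_1$ and $d$ (through the scale $L_0$ at which $\mathcal I_1$ disconnection becomes overwhelmingly likely) is then bookkeeping, but the interplay of the two decoupling schemes is where the real work lies.
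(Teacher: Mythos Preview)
Your overall plan matches the paper's approach almost exactly: Burton--Keane for (1), the trivial inclusion $\mathcal V_1^{u_1}\subset\mathcal V$ for (2), a renormalization/decoupling argument with seed event ``$B(x,L_0)\overset{\mathcal V}{\longleftrightarrow}\partial_i B(x,2L_0)$'' for (3), and monotonicity plus symmetry for (4). For (3) the paper does precisely what you sketch, bounding the seed event by the union of $\{B(x,L_0)\overset{\mathcal V_1}{\longleftrightarrow}\partial_i B(x,2L_0)\}$ (small since $u_1>u_{**}$) and $\{\mathcal V_2^{u_2}\cap B(x,2L_0)\neq\emptyset\}$ (small for $u_2$ large), and then running both interlacements through a single joint decoupling inequality on the same hierarchy of scales.

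There is one genuine slip in your treatment of (1). You write that finite energy holds because ``flipping the status of a single vertex can be achieved by adding one short trajectory''. But adding a trajectory to either interlacement can only move a vertex \emph{out of} $\mathcal V=\mathcal V_1\cup\mathcal V_2$, never into it; and the Burton--Keane step that merges two infinite clusters, or creates a trifurcation, requires you to \emph{open} a path in $\mathcal V$, i.e.\ force certain vertices to be vacant in at least one interlacement. That cannot be done by insertion. The paper (following Teixeira's argument for a single vacant set) instead uses local \emph{rerouting} maps on the trajectory space: whenever a trajectory enters the box $K$, replace its excursion in $K$ by one that avoids a prescribed path $U$ while exiting at the same point. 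These maps push $\mathbb P$ forward to an absolutely continuous measure and guarantee that $U\setminus\partial_i K$ lies in $\mathcal V_1\cap\mathcal V_2\subset\mathcal V$, which is exactly the local surgery Burton--Keane needs. Your FKG remark is harmless but not used; the essential missing ingredient is this rerouting (rather than insertion) mechanism.
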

Claim $(1)$ is an elementary property of most percolation models on $\mathbb{Z}^d$ or more generally an amenable graph. The proof of it uses a variant of the Burton-Keane argument \cite{BK}, \cite{unique V}. Claim $(2)$ is immediate from the definition of $u_*$. For $(3)$, one can see $\mathcal{V}^{u_2}_2$ as a small perturbation when $u_2$ is large. Thus, Claim $(3)$ mainly says that the percolation of the vacant set of interlacements is stable under this fluctuation. The proof relies on the renormalization argument introduced in \cite{percolation for vacant set} and local properties of random interlacements. The renormalization argument builds on an induction along the renormalization scheme and provides us with the decoupling inequalities (see Proposition \ref{decoupling}). Thus, we only need to prove the ``triggers", i.e., some local inequalities in a finite box $B(0, 2L_0)$. Locally, with high probability $\mathcal{V}$ cannot have a large connected component, since with high probability $\mathcal{V}^{u_1}_1 $ cannot connect $B(0,L_0)$ with $\partial_i B(0,2L_0)$ and $\mathcal{V}^{u_2}_2$ is empty in $B(0,2L_0)$ by enlarging $u_2$. Finally, combining $(2)$ and $(3)$, we can get $(4)$ instantaneously.

Next, we will present our result about the percolative properties of $\mathcal{K}$. These properties are different from those in the original model, i.e., random interlacements.

\begin{theorem}[Percolative properties of $\mathcal{K}$]
\label{1.2}
\leavevmode
\begin{itemize}
\item[{\rm (1).}] The set $\mathcal{K}$ has at most one infinite component a.s.

\item[{\rm (2).}] Given $u_1>0$, there exists a constant $C=C(u_1,d)<\infty$ such that $\mathcal{K}$ has a unique infinite component a.s.\ for all $u_2>C$.

\item[{\rm (3).}] There exist a constant $c=c(d)>0$ such that for all $u_1,u_2<c$, there are a.s.\ no infinite components in $\mathcal{K}$.

\item[{\rm (4).}] Given $d \geq 5$ and $u_1>0$, there exists a constant $c=c(u_1,d)>0$ such that for all $u_2<c$, there are a.s.\ no infinite components in $\mathcal{K}$.
\end{itemize}
\end{theorem}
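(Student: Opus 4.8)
\section*{Proof proposal for Theorem \ref{1.2}}

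\emph{Claim (1).} This should follow from the same variant of the Burton--Keane argument invoked for Claim (1) of Theorem \ref{1.1}. The only input needed is that $\mathcal{K}$ is an ergodic (in fact, a translation-invariant and tail-trivial) random subset of $\mathbb{Z}^d$ with finite-energy-type insertion/deletion tolerance at the level of the pair of interlacement configurations. Since $\mathcal{K}=\mathcal{I}_1\cap\mathcal{I}_2$ is an increasing function of each of the two independent interlacement point processes, and each interlacement process is translation invariant and ergodic, the product is ergodic; the trifurcation-counting argument on the amenable graph $\mathbb{Z}^d$ then rules out two or more infinite components. I would simply cite the argument used for Theorem \ref{1.1}(1), noting that it applies verbatim to any such field.

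\emph{Claim (2).} The plan is a renormalization/decoupling argument analogous to the one sketched for Theorem \ref{1.1}(3), but now exploiting that $\mathcal{I}_2^{u_2}\to\mathbb{Z}^d$ as $u_2\to\infty$ in the sense that $\mathbf{P}[B(0,2L_0)\subset\mathcal{I}_2^{u_2}]\to 1$. First I would fix $u_1>0$ and recall (from \cite{unique K} or the local estimates in Section \ref{section_2}) that $\mathcal{I}_1^{u_1}$ a.s.\ has a unique infinite component and, more quantitatively, that the probability that $\mathcal{I}_1^{u_1}$ crosses an annulus $B(0,2L_0)\setminus B(0,L_0)$ tends to $1$ as $L_0\to\infty$; equivalently $\mathcal{I}_1^{u_1}$ is in the supercritical regime for a suitable sprinkled/renormalized crossing event. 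Then I would choose $L_0$ large so that this crossing ``trigger'' for $\mathcal{I}_1$ holds with probability close to $1$, and then choose $C=C(u_1,d,L_0)$ so large that for $u_2>C$ we have $\mathbf{P}[B(0,2L_0)\subset\mathcal{I}_2^{u_2}]$ exceeding the complementary threshold. On the event that both triggers occur, $\mathcal{K}$ crosses the annulus; feeding this into the decoupling inequalities of Proposition \ref{decoupling} (applied to the product field, using independence of the two interlacements and the decoupling for each one separately) gives a stochastic domination by supercritical site percolation on a renormalized lattice, hence a.s.\ an infinite component, which is unique by Claim (1). The main technical point is to verify that Proposition \ref{decoupling} can be applied to the intersection field: one decouples $\mathcal{I}_1$ at two well-separated boxes paying the usual error, and $\mathcal{I}_2$ contributes no error at all once we replace it by the deterministic full box inside each renormalization cell (or, more robustly, one applies the single-interlacement decoupling to each factor and multiplies).

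\emph{Claim (3).} Here I want a union bound / Peierls-type argument valid in all dimensions $d\ge 3$ for small $u_1,u_2$. The key estimate is a bound on $\mathbf{P}[x\in\mathcal{I}^u]=1-\mathbf{P}[x\in\mathcal{V}^u]=1-e^{-u/g(0)}\le u/g(0)$, where $g$ is the Green's function, so $\mathbf{P}[x\in\mathcal{K}]\le (u_1 u_2)/g(0)^2$. However, $\mathcal{K}$ is not a product (or even finitely dependent) field, so a naive Peierls bound over self-avoiding paths does not immediately close. The approach I would take is to use the decoupling inequalities of Proposition \ref{decoupling} at a single fixed (small) scale to compare $\mathcal{K}$ with a subcritical Bernoulli percolation: pick a renormalization scale $L_0$, and show that the probability that $\mathcal{K}$ crosses $B(0,2L_0)\setminus B(0,L_0)$ is at most $\mathbf{P}[\mathcal{I}_1 \text{ crosses}]\cdot(\text{something})$, and each factor is $\le C L_0^{d} (u_i/g(0))$ small by a first-moment bound on the number of vertices of $\mathcal{I}_i$ on a crossing path — wait, that is too lossy. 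Instead, the clean route is: for $u_i$ small, the probability that $\mathcal{I}_i^{u_i}$ connects $0$ to $\partial_i B(0,n)$ decays (this is the standard fact $u_i<u_*$, with a quantitative bound as $u_i\downarrow 0$), and intersecting with a second independent interlacement only shrinks the set; then a Peierls argument on $\mathcal{I}_1$ alone — which \emph{does} have good decoupling — shows $\mathcal{I}_1^{u_1}$ (hence $\mathcal{K}\subset\mathcal{I}_1^{u_1}$) has no infinite component once $u_1<u_*$. But that would only need $u_1$ small, not both; so Claim (3) as stated is \emph{weaker} than Claim (4) and follows immediately from ``$u_1<u_*\Rightarrow\mathcal{I}_1$ has no infinite cluster, and $\mathcal{K}\subset\mathcal{I}_1$'' together with $u_*>0$ (proved in \cite{unique K}). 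I would present it that way: take $c=u_*>0$.

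\emph{Claim (4).} This is the substantive statement and, I expect, the main obstacle. Fix $d\ge 5$ and $u_1>0$; we must kill all infinite components of $\mathcal{K}=\mathcal{I}_1^{u_1}\cap\mathcal{I}_2^{u_2}$ for $u_2$ below some $c(u_1,d)>0$, even though $\mathcal{I}_1^{u_1}$ itself may be deep in its supercritical regime. The plan is a Peierls argument \emph{along $\mathcal{I}_1$}: condition on $\mathcal{I}_1^{u_1}$, and bound the probability that $\mathcal{I}_2^{u_2}$ retains a long connected path inside the (random but fixed) set $\mathcal{I}_1^{u_1}$. The relevant structural fact, special to $d\ge 5$, is a bound on the \emph{two-point function of $\mathcal{I}_1$}, or more precisely on the expected number of self-avoiding paths in $\mathcal{I}_1^{u_1}$ of length $n$ from the origin: one expects $\mathbf{E}[\#\{\text{SAW of length }n\text{ in }\mathcal{I}_1^{u_1}\text{ from }0\}]\le (\mu(u_1,d))^{\,n}$ for some connective-constant-type $\mu(u_1,d)<\infty$, provided the interlacement two-point function $\mathbf{P}[x\in\mathcal{I}_1^{u_1}\mid 0\in\mathcal{I}_1^{u_1}]$ is summable along paths — and summability of $g(0,x)$, i.e.\ $\sum_x g(0,x)^{?}$, is exactly where $d\ge 5$ (rather than $d\ge 3$) would enter through the decay $g(0,x)\asymp |x|^{2-d}$. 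Given such a $\mu(u_1,d)$, one conditions on $\mathcal{I}_1$, lists the at most $\mu^n$ candidate length-$n$ paths, and for each the conditional probability that all its vertices lie in $\mathcal{I}_2^{u_2}$ is at most $(1-e^{-u_2/g(0)})^{n}\le (u_2/g(0))^{\,n}$ — but this last step is false as stated, because $\mathcal{I}_2$ is positively correlated along a path; the correct bound, via the decoupling inequality for a \emph{single} interlacement $\mathcal{I}_2$ (Proposition \ref{decoupling} specialized to one factor), is that the probability $\mathcal{I}_2^{u_2}$ contains a given connected set of $n$ vertices at mutual distances $\ge$ some scale is $\le (Cu_2)^{cn}$ up to an entropy factor absorbed into $\mu$. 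Choosing $u_2$ small so that $\mu(u_1,d)\cdot (C u_2)^{c}<1$ makes the expected number of length-$n$ paths in $\mathcal{K}$ from the origin summable in $n$, so by Borel--Cantelli the origin is a.s.\ not in an infinite $\mathcal{K}$-component, and translation invariance finishes. The crux — and the reason for the dimension restriction — is establishing the exponential bound on self-avoiding paths inside $\mathcal{I}_1^{u_1}$ with a constant $\mu(u_1,d)$ that does \emph{not} blow up; I would derive this from a careful iteration of the decoupling inequalities of Proposition \ref{decoupling} applied to $\mathcal{I}_1$ together with the summability of the Green's function, which is precisely what $d\ge 5$ buys.
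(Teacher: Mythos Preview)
Your treatment of Claim (3) contains a fundamental misconception that makes the argument fail. You write that ``$u_1<u_*\Rightarrow\mathcal{I}_1$ has no infinite cluster, and $\mathcal{K}\subset\mathcal{I}_1$'', and propose to take $c=u_*$. But the interlacement set $\mathcal{I}^u$ is almost surely an \emph{infinite connected set} for every $u>0$; this is (2.21) of \cite{unique K}. The parameter $u_*$ is the percolation threshold for the \emph{vacant} set $\mathcal{V}^u=(\mathcal{I}^u)^c$, not for $\mathcal{I}^u$. So the inclusion $\mathcal{K}\subset\mathcal{I}_1^{u_1}$ gives no information whatsoever, and Claim (3) is genuinely nontrivial. (Relatedly, Claim (3) is not weaker than Claim (4): it is the only statement covering $d=3,4$.) The paper's proof is quite different from anything you sketched: one takes a seed event ``$\partial_i B(x,L_0)\leftrightarrow\partial_i B(x,2L_0)$ in $\mathcal{K}$'', couples $M=u_1 L_0^{d-2}=u_2 L_0^{d-2}$, and shows that conditionally on having $O(M)$ walks in each interlacement, the probability that two independent walks intersect on $\partial_i B(0,L_0)$ tends to $0$ as $L_0\to\infty$ with $M$ fixed (Lemma \ref{4.5}). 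One first fixes $M$ large to absorb the decoupling error terms, then sends $L_0\to\infty$ (hence $u_1,u_2\to 0$). The $d=3$ case requires the delicate random-walk estimate of Proposition \ref{key_estimate}.

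For Claim (4) your SAW/Peierls approach is not the paper's, and as you yourself flag, the step ``$\mathbf{P}[\gamma\subset\mathcal{I}_2^{u_2}]\le (Cu_2)^{cn}$ for a nearest-neighbor path $\gamma$'' is problematic: a single trajectory in $\mathcal{I}_2$ can cover long stretches of $\gamma$, so there is no exponential decay in $|\gamma|$ without further geometric input, and controlling the connective constant of $\mathcal{I}_1^{u_1}$ for arbitrary $u_1>0$ is not obviously available. The paper instead exploits \emph{cut times} of simple random walk (which have positive density precisely when $d\ge 5$, see \cite{cut time}): conditioning on a bounded number of trajectories in $\mathcal{I}_1$, one argues that with high probability they do not mutually intersect, reduces to a single walk, and then shows that cut times whose location is vacant in $\mathcal{I}_2^{u_2}$ chop the walk into pieces of diameter $<L_0/2$, preventing any crossing of the annulus. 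This is where both the dimension restriction and the smallness of $u_2$ enter.

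Finally, for Claim (2) your outline is in the right spirit, but you have glossed over the point the paper singles out as the main difficulty: the natural local event (``$\mathcal{I}_1$ has a large connected component in the box and $\mathcal{V}_2$ is empty there'') is not monotone, so Proposition \ref{decoupling} cannot be applied to it directly. The paper resolves this by splitting into three monotone seed events $\overline{E}_x,\overline{F}_x,\overline{G}_x$ (existence of a large component, an upper bound on $|\mathcal{I}_1|$ in sub-boxes, and $\mathcal{V}_2\cap\text{box}=\emptyset$) and running the renormalization on each separately; see Proposition \ref{seed estimate} and the strong-connectivity input Proposition \ref{strong_c}.
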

 Claim (1) is elementary and its proof is the same as that of (1) in Theorem \ref{1.1}. For (2), one can see $\mathcal{V}^{u_2}_2$ as a small fluctuation when $u_2$ is large. Thus, Claim (2) mainly says that the percolation of the intersection $\mathcal{K}=\mathcal{I}^{u_1}_1 \backslash \mathcal{V}^{u_2}_2$ is stable under this fluctuation. The proof uses local properties of random interlacements and the renormalization argument. Locally, random interlacements are strongly connected meaning that with high probability all the vertices of $\mathcal{I}^u$ in $B(0,N)$ are connected in $B(0,2N)$. Meanwhile, $u_2$ can be taken large such that with high probability $\mathcal{V}^{u_2}_2$ is empty in $B(0,2N)$. Thus, with high probability, $\mathcal{K}$ has a large connected component in $B(0,2N)$. Then, through the renormalization argument, these local properties can be pushed to the global ones. The rigorous proof is a little bit harder since the above mentioned event isn't monotone and we cannot use the decoupling inequalities directly to it. For the result of (3), we consider the box of side length $2N$ and write $M=u_1N^{d-2}=u_2N^{d-2}$. First, pick a large $M$ to offset the error terms in the decoupling inequalities. In $\mathcal{I}^{u_1}_1 \cap B(x,2N)$ and $\mathcal{I}^{u_2}_2 \cap B(x,2N)$, with high probability, there are $O(M)$ independent random walks individually. Given $M$, one can take $N$ large and simultaneously $u_1$ and $u_2$ small such that only with small probability $\partial_i B(0,N)$ is connected to $\partial_i B(0,2N)$ by the intersection. Then, we can use the renormalization argument to push these local properties to the global ones. The rigorous proof will need some concrete calculations on simple random walks. Claim (4) is an improvement of Claim (3) for $d \geq 5$. Its proof uses cut times of random walks \cite{cut time}.

\medskip

By now, two natural questions arise: is there a phase where two infinite components coexist? Similarly, is there a phase where neither of them exists? Our results above also shed some light to these questions. By (2) in Theorem \ref{1.1} and (2) in Theorem \ref{1.2}, there exist choices of $u_1$ and $u_2$ such that both $\mathcal{K}$ and $\mathcal{V}$ have an infinite component. For the second question, we give an affirmative answer when the dimension is high.  Together with Theorem 0.1 in \cite{lower bound}, Claim (2) in Theorem \ref{1.1} and the following Theorem \ref{1.3}, we conclude that when the dimension is high, at least one of $\mathcal{K}$ and $\mathcal{V}$ has an infinite component. We summarize the discussion above into the following theorem.
\begin{theorem}
\label{1.4}
\leavevmode
\begin{itemize}
\item[{\rm (1).}] There exists a phase such that $\mathcal{K}$ and $\mathcal{V}$ both have a unique infinite component a.s.
\item[{\rm (2).}] There exists $D_2 <\infty$ such that for all $d> D_2$ and $u_1,u_2>0$, at least one of $\mathcal{K}$ and $\mathcal{V}$ has a unique infinite component a.s.
\end{itemize}
\end{theorem}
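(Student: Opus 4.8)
The plan is to obtain both claims as short bookkeeping consequences of the results already stated, together with the cited lower bound on $u_*$; no new probabilistic estimate is needed at this stage, since the substantive work is packaged into Theorems \ref{1.1}, \ref{1.2}, \ref{1.3} and into \cite{lower bound}. For Claim (1), I would fix any $u_1 \in (0, u_*)$. Then $\mathcal{V}^{u_1,u_2} \supseteq \mathcal{V}^{u_1}_1$, and $\mathcal{V}^{u_1}_1$ a.s.\ has an infinite component for every $u_2$ (it does not depend on $u_2$), so by Claim (2) of Theorem \ref{1.1} the set $\mathcal{V}$ a.s.\ has a unique infinite component for this $u_1$ and all $u_2$. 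Now apply Claim (2) of Theorem \ref{1.2} to this same $u_1$: there is $C = C(u_1,d) < \infty$ such that $\mathcal{K}$ a.s.\ has a unique infinite component whenever $u_2 > C$. Hence every pair $(u_1,u_2)$ with $u_1 < u_*$ and $u_2 > C(u_1,d)$ lies in a phase where both $\mathcal{K}$ and $\mathcal{V}$ have a unique infinite component, which is Claim (1); uniqueness of each is just Claim (1) of Theorems \ref{1.1} and \ref{1.2}.

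For Claim (2), I would first choose $D_2 < \infty$ so that for all $d > D_2$ two things hold: (i) $d$ exceeds the dimension threshold appearing in Theorem \ref{1.3}, so that $\mathcal{K}^{u_1,u_2}$ a.s.\ has a unique infinite component whenever $u_1 \ge 1$ and $u_2 \ge 1$ (invoking Theorem \ref{1.3} and, if needed, the monotonicity of $\mathcal{I}^u$ in $u$ to pass from $(1,1)$ to all $(u_1,u_2)$ with $u_1, u_2 \ge 1$); and (ii) $u_*(d) > 1$, which holds for all large $d$ by Theorem 0.1 of \cite{lower bound} (indeed $u_*(d) \to \infty$). Now fix any $d > D_2$ and any $u_1, u_2 > 0$, and split into two cases. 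If $\min(u_1,u_2) < u_*$, then by Claim (2) of Theorem \ref{1.1} the set $\mathcal{V}$ a.s.\ has a unique infinite component. Otherwise $u_1 \ge u_*$ and $u_2 \ge u_*$; since $u_* > 1$ this forces $u_1 \ge 1$ and $u_2 \ge 1$, so by (i) the set $\mathcal{K}$ a.s.\ has a unique infinite component. These two cases are exhaustive, so at least one of $\mathcal{K}$, $\mathcal{V}$ a.s.\ has a unique infinite component, which proves Claim (2).

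I expect no genuine obstacle inside this argument itself: it is a case analysis whose only delicate point is that the second case $\{\min(u_1,u_2) \ge u_*\}$ falls inside the region $\{u_1 \ge 1,\, u_2 \ge 1\}$ covered by Theorem \ref{1.3} exactly because $u_* > 1$, which is precisely why the dimension must be taken large. The real difficulty, external to Theorem \ref{1.4}, lies in Theorem \ref{1.3} — showing that in high dimension the phase curve of $\mathcal{K}$ lies below $\{(x,y): x \ge 1,\, y \ge 1\}$ — and in the high-dimensional lower bound $u_*(d) > 1$ borrowed from \cite{lower bound}; once those are in hand, the deduction above is immediate.
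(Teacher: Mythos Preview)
Your proposal is correct and follows essentially the same route as the paper: Claim~(1) is obtained by taking $u_1\in(0,u_*)$ and then $u_2$ large via Theorem~\ref{1.1}(2) and Theorem~\ref{1.2}(2), and Claim~(2) is the same case split $\min(u_1,u_2)<u_*$ versus $\min(u_1,u_2)\ge u_*>1$, using Theorem~\ref{1.1}(2), Theorem~\ref{1.3}, and the lower bound on $u_*(d)$ from \cite{lower bound}. The only cosmetic remark is that Theorem~\ref{1.3} already covers all $u_1,u_2>1$, so your appeal to monotonicity is unnecessary, and since $u_*>1$ is strict you land in that open region anyway.
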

\noindent A key ingredient of the proof of (2) in Theorem \ref{1.4} is the following Theorem \ref{1.3}.
\begin{theorem}
\label{1.3}
There exists a constant $D_1$ such that for all $d>D_1$ and $u_1,u_2>1$, there is a.s.\ a unique infinite component in $\mathcal{K}$. In other words, the phase curve of $\mathcal{K}$ lies below the region $\{(x,y):x \geq 1\mbox{ and } y \geq1\}$.
\end{theorem}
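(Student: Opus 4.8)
\noindent\emph{Proof strategy.} The plan is a multiscale renormalization seeded by a local estimate that exploits the high‑dimensional asymptotics of random interlacements, preceded by a monotone‑coupling reduction that makes the dimension threshold uniform in $u_1,u_2$.

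\medskip
\noindent\emph{Reduction.} Under the canonical monotone coupling one has $\mathcal I^{u}_i\subseteq\mathcal I^{u'}_i$ for $u\le u'$, hence $\mathcal K^{u_1,u_2}\supseteq\mathcal K^{1,1}$ for all $u_1,u_2\ge 1$. So it suffices to find a single $D_1$ such that, for $d>D_1$, the set $\mathcal K^{1,1}$ a.s.\ has an infinite component; uniqueness is then Claim (1) of Theorem \ref{1.2}. Note that, for a box $B$, the event $\{\mathcal K\text{ crosses }B\}$ is increasing in each of $\mathcal I_1,\mathcal I_2$ (enlarging either configuration enlarges $\mathcal K$), so — unlike in the proof of Claim (2) of Theorem \ref{1.2} — the decoupling inequalities of Proposition \ref{decoupling} apply to it directly. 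Since those inequalities carry a level‑sprinkling, I will run the renormalization at base levels $(1/2,1/2)$ and let them sprinkle up to $(1,1)$, the factor‑$2$ of room being more than enough.

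\medskip
\noindent\emph{Base estimate via high dimensions.} Fix an integer $k_0$ with $p_c^{\mathrm{site}}(\mathbb Z^{k_0})<(1-e^{-1/2})^2$; such $k_0$ exists because $p_c^{\mathrm{site}}(\mathbb Z^{k})\to 0$ as $k\to\infty$. Identify $\mathbb Z^{k_0}$ with the coordinate sublattice $\mathbb Z^{k_0}\times\{0\}^{d-k_0}\subseteq\mathbb Z^d$ and set $S=B(0,L_0)\cap(\mathbb Z^{k_0}\times\{0\}^{d-k_0})$, a \emph{fixed} finite set. The crux is a local ``Bernoullisation'': for fixed finite $S$, the law of $(\mathbbm{1}\{x\in\mathcal I^{u}\})_{x\in S}$ converges in total variation, as $d\to\infty$, to the i.i.d.\ Bernoulli$(1-e^{-u/g_d(0,0)})$ field, and $g_d(0,0)\to 1$. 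Indeed, in high dimensions $\mathrm{cap}(S)\to|S|$; a single interlacement trajectory meeting $S$ meets it in exactly one vertex with probability $\to 1$ (from any vertex of the fixed set $S$ the walk leaves $S$ at its next step and then never returns, with probability $\to 1$), and that vertex is asymptotically uniform on $S$; Poisson thinning then makes the occupation field \emph{exactly} i.i.d.\ Bernoulli in the limit, with total‑variation error $O(\mathrm{poly}(|S|)/d)$. Applying this to the two independent copies at level $1/2$, the occupation field of $\mathcal K^{1/2,1/2}$ on $S$ converges in total variation to i.i.d.\ Bernoulli of density $(1-e^{-1/(2g_d(0,0))})^2\to(1-e^{-1/2})^2>p_c^{\mathrm{site}}(\mathbb Z^{k_0})$, i.e.\ to a supercritical field on $\mathbb Z^{k_0}$, for which the probability of an open crossing of $S$ within the sublattice tends to $1$ as $L_0\to\infty$. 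Hence, given any $\varepsilon_0>0$, one first fixes $L_0$ large and then $D_1$ large so that for all $d>D_1$ the event that $\mathcal K^{1/2,1/2}$ crosses $S$ (a fortiori, crosses the $\mathbb Z^d$‑box $B(0,L_0)$) has probability at least $1-\varepsilon_0$. Here $k_0$, $L_0$ and $|S|$ are fixed independently of $d$.

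\medskip
\noindent\emph{Renormalization and conclusion.} With $\varepsilon_0$ small enough (depending on $k_0$ and the parameters of Proposition \ref{decoupling}, not on $L_0$), feed this estimate into the standard multiscale induction along a hierarchy of $k_0$‑dimensional boxes $S=S^{(0)}\subseteq S^{(1)}\subseteq\cdots$ of growing side‑length and fixed branching ratio: invoking the sprinkled decoupling at each step, the probability that a scale‑$k$ box fails to be crossed by $\mathcal K$ at the $k$‑th sprinkled level decays super‑polynomially in the scale, so a.s.\ there is an infinite nested family of crossed boxes whose crossings overlap and merge into an infinite component of $\mathcal K^{v,v}$ for some $v\le 1$. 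By the monotone coupling this is an infinite component of $\mathcal K^{u_1,u_2}$ for all $u_1,u_2>1$, unique by Claim (1) of Theorem \ref{1.2}.

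\medskip
\noindent\emph{Main obstacle.} The technical heart is the local Bernoullisation lemma together with arranging the quantifiers so that $D_1$ is one constant: one must (i) work inside a fixed low‑dimensional sublattice, since a genuine $\mathbb Z^d$‑box of any fixed radius has $\sim(2L_0+1)^{d}$ vertices and the total‑variation comparison would blow up with $d$, and (ii) reduce to a single pair of levels via the monotone coupling, so that the base scale $L_0$ — and hence the dimension threshold the Bernoullisation needs — does not deteriorate as $u_1,u_2\downarrow 1$. The one genuinely new computation is the quantitative total‑variation rate, essentially the estimate that a high‑dimensional simple random walk started on a fixed finite set revisits that set with probability $O(1/d)$; granting this, and that the decoupling of Proposition \ref{decoupling} remains effective at large $d$ (which it should be, since $g_d(x,y)\asymp|x-y|^{-(d-2)}$ decays faster as $d$ grows), the rest is the by‑now‑routine renormalization bookkeeping for interlacements.
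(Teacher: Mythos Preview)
Your local input is sound and genuinely different from the paper's: you Bernoullise $\mathcal K^{1/2,1/2}$ on a \emph{fixed} finite slab $S$ in a fixed $k_0$-dimensional sublattice via total-variation comparison as $d\to\infty$, whereas the paper works on the hypercube $\{0,1\}^d$ (a set that grows with $d$), proves \emph{stochastic domination} by Bernoulli$\bigl((1-e^{-c_1^2})^2\bigr)$ there via Lemma~\ref{dominate}, and then runs the Alon--Benjamini--Stacey isoperimetric argument to produce a \emph{ubiquitous} component in each hypercube connected to its four neighbours in a $2$-dimensional slab. The paper then cites Theorem~2.2 of \cite{lower bound} (a direct, sprinkling-free decoupling tailored to the high-$d$ regime with $d$-independent constants) rather than Proposition~\ref{decoupling}.

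The gap in your proposal is the passage from ``boxes are crossed'' to ``there is an infinite component''. A crossing of $S^{(k)}$ and a crossing of $S^{(k+1)}\supseteq S^{(k)}$ have no reason to meet; likewise, crossings of adjacent boxes in $\mathbb Z^{k_0}$ with $k_0\ge 3$ need not intersect (the $2$-dimensional topological trick is unavailable, and you cannot take $k_0=2$ since $(1-e^{-1/2})^2\approx 0.155<p_c^{\mathrm{site}}(\mathbb Z^2)$). Proposition~\ref{decoupling} does not bootstrap ``crossed at scale $k$'' to ``crossed at scale $k+1$''; it bounds the hierarchical event $G_{0,n}$, which is used to show that \emph{bad} vertices do not form long paths/circuits, and this only yields percolation if ``good'' carries a local connectivity guarantee linking neighbouring good boxes. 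That is exactly why the paper's seed event is not a crossing but the existence of a ubiquitous component \emph{together with} its connection to the neighbouring hypercubes' ubiquitous components, and why the proof of Proposition~\ref{percolate_K} splits its analogous non-monotone ``unique big cluster'' event into the monotone pieces $E_x,F_x,G_x$. Your scheme can be repaired along the same lines---take as seed the (suitably split) event that the $k_0$-dimensional box $S$ contains a cluster of density $>1/2$ touching all faces, so that adjacent good boxes must share it---but as written the renormalization step does not close. A secondary point to watch is $d$-uniformity: the constant $A(d,\epsilon)$ in Proposition~\ref{decoupling} is $d$-dependent, so you must either check it stays bounded (it does, morally because $g_d$ decays faster in $d$) or, as the paper does, invoke the $d$-uniform decoupling of \cite{lower bound}.
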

As a remark, we note that the lower bound $1$ here is not optimal and can be improved to $\frac{1}{d^{1/2-\epsilon}}$. However, the argument cannot be extended to low dimensions as it involves some asymptotic analysis.

The proof of this theorem contains two parts: a local analysis and a renormalization argument similar to those in the proof of the theorems above. Locally, in a hypercube $\{0,1\}^d$, random interlacements can stochastically dominate a Bernoulli site percolation. Thus, by the method in \cite{high dimension}, in high dimensions, in each hypercube with high probability there is a ubiquitous component (see Section \ref{section_5} for the definition) connected to the neighboring ones (which also exist with high probability). Then, by the renormalization argument, there will be an infinite component.

Next, we will explain how this article is organized. In Section \ref{section_2}, we introduce some notations, make a brief introduction to random interlacements, explain the renormalization argument which we will use repeatedly in this article and give some estimates on simple random walks. Section \ref{section_3} is devoted to the proof of Theorem \ref{1.1}. In Section \ref{section_4}, we prove Theorem $\ref{1.2}$. The phase diagram of $\mathcal{K}$ and $\mathcal{V}$ put together is discussed in Section \ref{section_5}. 

Finally, we explain the convention regarding constants in this work. All constants in this article are positive. Constants like $ c,C, \epsilon, \gamma$ may change from place to place, while constants with subscripts like $c_1,C_1,D_1$ are kept fixed through the article. The constants in Section \ref{section_3} and \ref{section_4} may depend on $d$ implicitly, while the constants in Section \ref{section_5} will not.
\section{Notation and useful results}
\label{section_2}
In this section, we introduce notation, review some basic properties of random interlacements together with the renormalization argument introduced in Section 2 of \cite{percolation for vacant set} and collect some estimates related to simple random walks.

For a real value $a$, we write $[a]$ for the largest integer $\leq a$. We consider the integer lattice $\mathbb{Z}^d$ with $d \geq 3$. Norms $| \cdot |_1$ and $| \cdot |_{\infty}$ represent the $l^1$-norms and $l^{\infty}$-norms on $\mathbb{Z}^d$. We call two vertices $x$ and $y$ *-neighbors if $| x-y |_{\infty}=1$ and nearest neighbors if $| x-y |_1=1$. We call a set $\pi=(y_1,y_2,...,y_k) \subset \mathbb{Z}^d$ a *-neighbor path if $y_i,y_{i+1}$ are *-neighbors for $1 \leq i \leq k-1$, and a nearest neighbor path if $y_i,y_{i+1}$ are nearest neighbors for $1 \leq i \leq k-1$. A path is simple if $y_i$s are all different from each other. Given $K,L,U$ subsets of $\mathbb{Z}^d$, we say $K$ and $L$ are connected by $U$ and write $K \overset{U}{\longleftrightarrow}L$, if there exists a nearest neighbor path with values in $U$ which starts in $K$ and ends in $L$. We denote by $B(x,N)=\{ y \in \mathbb{Z}^d:|x-y|_{\infty}\leq N \}$ the closed $l^{\infty} $ ball centered at $x$ and of radius $N$. For a finite subset $K$ of $\mathbb{Z}^d$, we write $\partial_i K=\{x \in K: x \mbox{ is a nearest neighbor of some point }y \notin K \}$ for its inner boundary. 

Here is some notation about discrete-time simple random walks. $P_x$ represents the discrete-time simple random walk $X$ started at $x$ on $\mathbb{Z}^d$. We write $P_{x,y}$ for two independent discrete-time simple random walks started at $x$ and $y$ respectively. Let $K$ be a finite subset of $\mathbb{Z}^d$. We write $\tau_K$ for the first time that $X$ hits $K$ and $\tau_K^+$ for the first positive time that $X$ hits $K$. We denote the equilibrium measure of $K$ by $e_K(x)=P_x\left[\tau_K^+=\infty \right]\mathbbm{1}_K(x)$ for $x \in \mathbb{Z}^d$, and the capacity of $K$ by $\mbox{cap} (K)=\sum_x e_K(x)$.

\subsection{Random interlacements and the renormalization argument}
First, we briefly introduce the random interlacements.

Let $W$ be the space of doubly-infinite nearest neighbor paths in $\mathbb{Z}^d$, and let $W^*$ be the quotient space of $W$ modulo time shift. $\pi$ is the quotient map from $W$ to $W^*$. By Chapter 5 of \cite{introduction}, we can define a Poisson point measure $\mu$ on $W^*$ with the following local property. Given $K$ a finite subset of $\mathbb{Z}^d$, we write $W^*_K$ for the paths in $W^*$ that pass through $K$ and $\mu_K$ for $\mu$ restricted to $W^*_K$. Then
\begin{equation}
\label{def_RI}
    \mu_K=\sum_{i=1}^{N_K}\delta_{\pi(X_i)},
\end{equation}
where $N_K$ is a random variable $\sim$ ${\rm Poisson}(u\cdot  {\rm cap}(K))$ and $X_i$ is a doubly-infinite path in which $X_i(0)$ is a random point in $K$ according to the equilibrium measure distribution. Conditional on $X_i(0)$, the positive side $\{X_i\}_{i \geq 0}$ is a simple random walk, and the opposite side $\{X_i\}_{i \leq 0}$ is a simple random walk conditional on $\{ \tau_K^+=\infty \}$ and independent of the positive one. Given $N_K$, all these $N_K$ paths are conditionally independent. The set of points occupied by at least one path is called the interlacement set at level $u$, denoted by $\mathcal{I}^u$. The complement of it is called the vacant set, denoted by $\mathcal{V}^u$. The graph induced by the edges visited by random interlacements on $\mathbb{Z}^d$ is denoted by $\Tilde{\mathcal{I}}^u$.

There is a more concise alternative definition of interlacements. The random interlacements $\mathcal{I}^u$ are a random subset of $\mathbb{Z}^d$ whose law is given by
\begin{equation}
\label{def_RI_2}
    P\left[\mathcal{I}^u \cap K=\emptyset \right]=e^{-u \cdot {\rm cap}(K)}\mbox{, for all finite subset }K \mbox{ of }\mathbb{Z}^d.
\end{equation}

Let $(\Omega_1,\mathcal{F}_1,P^{u_1})$ be the probability space on which $\mathcal{I}^{u_1}_1$ is defined (see (5.2.1) and (5.2.6) of \cite{introduction} for more details). Let $(\Omega_2,\mathcal{F}_2,P^{u_2})$ be the probability space on which $\mathcal{I}^{u_2}_2$ is defined. Finally, let $(\Omega,\mathcal{F},\mathbbm{P})=(\Omega_1 \times \Omega_2,\sigma (\mathcal{F}_1 \times \mathcal{F}_2),P^{u_1} \otimes P^{u_2})$ be the probability space on which $\mathcal{I}^{u_1}_1$ and $\mathcal{I}^{u_2}_2$ are jointly defined.

Random interlacements are a typical model of percolation with long-range correlation. It has been known that the interlacement set itself almost surely has a unique infinite component as shown in (2.21) of \cite{unique K}, while the vacant set undergoes a non-trivial phase transition in $u$ (see Theorem 4.3 of \cite{unique K}, Theorem 3.4 of \cite{percolation for vacant set} and Theorem 3.1 of \cite{soft local times}). There are two percolative thresholds of interlacements $u_*$ and $u_{**}$ that we will use in this paper:
\begin{equation}
\label{def_u*}
    u_*=\inf \left\{ u:P\left[0\overset{\mathcal{V}^u}{\longleftrightarrow}\infty\right]=0 \right\},
\end{equation}
\begin{equation}
\label{def_u**}
    u_{**}=\inf \left\{ u:\liminf_{L\rightarrow \infty}P\left[B(0,L) \overset{\mathcal{V}^u}{\longleftrightarrow} \partial_i B(0,2L) \right] = 0 \right\}.
\end{equation}

When $u$ is above $u_*$, there are a.s.\ no infinite clusters in the vacant set. When $u$ is above $u_{**}$, each component of the vacant set is exponentially small. There is another critical parameter $\overline{u}$ introduced in Theorem 1.1 of \cite{local property}. It is plausible, but unproven at the moment, that actually $\overline{u}=u_*=u_{**}$, which is one of the most important open problems in this field. In the context of the level-set percolation of Gaussian free field, a model which bears similar properties to random interlacements, the parallel problem has been solved recently in Theorem 1.1 of \cite{equality of parameters}.

Next, we will state the renormalization argument first introduced in Chapter 2 of \cite{percolation for vacant set}. The idea is to zoom in on a large box layer upon layer along the renormalization scheme. In each layer, we can decouple the configurations in two far apart boxes but with small errors. The version we present here is from Chapter 8 of \cite{introduction}. Let $L_0$ and $l_0$ be two positive integers chosen according to the context, and $L_n=L_0 \cdot l_0^n$.
We define the renormalized lattice graph $\mathbb{G}_n$ as
\begin{equation*}
    \mathbb{G}_n=L_n\mathbb{Z}^d=\left\{ L_nx\mbox{ : }x \in \mathbb{Z}^d \right\}.
\end{equation*}
For $x\in \mathbb{Z}^d$ and $n \geq 0$, let
\begin{equation*}
    \Lambda_{x,n}=\mathbb{G}_{n-1} \cap B(x,L_n)
\end{equation*}
be a renormalized box with side length $L_n$. We call an event $G_x=G_{x,0}$ seed event if it is measurable with respect to the configuration in $B(x,2L_0)$ and shift-invariant, i.e., $\psi \in G_x$ if and only if $\psi(\cdot -x) \in G_0$, where $x$ is a vertex of $\mathbb{Z}^d$. We also hope $G_x$ monotone. For $x\in \mathbb{Z}^d$ and $n \geq 1$, we write
\begin{equation}
\label{seed}
    G_{x,n}=\underset{x_1,x_2 \in \Lambda_{x,n};|x_1-x_2|_{\infty}>\frac{L_n}{100}}{\bigcup}G_{x_1,n-1}\cap G_{x_2,n-1}.
\end{equation}
$G_{x,n}$ means that there exists a dyadic tree whose leaves are separated apart and satisfy $G_x$.

A typical scenario where we will use this event is in the following claim. When there is a *-neighbor or nearest neighbor path connecting $0$ to $\partial_i B(0,L_n)$ such that every vertex of this path satisfies $G_x$, then $G_{x,n}$ happens. This claim can be proved by induction and it is used in Propositions \ref{nonpercolate_V}, \ref{percolate_K} and \ref{nonpercolate_K}. 

The following decoupling inequalities are a variant of Theorem 8.5 of \cite{introduction} and they are used repeatedly in our proofs. They follow from the idea of renormalization and the sprinkling technique in Proposition 3.1 of \cite{unique K}.
\begin{proposition}[Decoupling inequalities for two interlacements]
\label{decoupling}
For $d\geq3$ and $\epsilon>0$, there exists an integer $A=A(d,\epsilon)$ such that for all $n \geq 0$, $L_0 \geq 1$ and $l_0 \geq A$, the following two statements hold:

1.\ if $G_x$ is an increasing seed event, then for all ${u_1}^- \leq (1-\epsilon)u_1$ and  ${u_2}^- \leq (1-\epsilon)u_2$
\begin{equation}
\begin{split}
\label{decoup_increase}
    &\mathbbm{P}\left[\mathcal{K}^{{u_1}^-,{u_2}^-} \in G_{0,n}\right] \\
    \leq &(2l_0+1)^{d \cdot 2^{n+1}}\left[\mathbbm{P}\left[\mathcal{K}^{u_1,u_2} \in G_0\right] + \epsilon ({u_1}^-, L_0, l_0)+ \epsilon ({u_2}^-, L_0, l_0)\right] ^{2^n};
\end{split}
\end{equation}

2.\ if $G_x$ is a decreasing seed event, then for all ${u_1}^+ \geq (1+\epsilon)u_1$ and  ${u_2}^+ \geq (1+\epsilon)u_2$
\begin{equation}
\begin{split}
\label{decoup_decrease}
   & \mathbbm{P}\left[\mathcal{K}^{{u_1}^+,{u_2}^+} \in G_{0,n}\right]\\
    \leq &(2l_0+1)^{d \cdot 2^{n+1}}\left[ \mathbbm{P}[\mathcal{K}^{u_1,u_2} \in G_0]+ \epsilon ({u_1}, L_0, l_0)+ \epsilon ({u_2}, L_0, l_0)\right]^{2^n},
\end{split}
\end{equation}
where
\begin{equation}
\label{def_epsilon}
    \epsilon (u, L_0, l_0)= \frac{2e^{-uL_0^{d-2}l_0^{\frac{d-2}{2}}}}{1-e^{-uL_0^{d-2}l_0^{\frac{d-2}{2}}}}.
\end{equation}
\end{proposition}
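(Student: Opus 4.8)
The plan is to reduce the two--interlacement decoupling to the single--interlacement decoupling --- a variant of which is Theorem 8.5 of \cite{introduction} --- applied separately to $\mathcal{I}_1$ and $\mathcal{I}_2$, using their independence so that the two sprinkling errors add rather than interact. I will describe the increasing case \eqref{decoup_increase}; \eqref{decoup_decrease} is parallel once one observes that a seed event decreasing in $\mathcal{K}=\mathcal{I}_1\cap\mathcal{I}_2$ is increasing in each vacant set $\mathcal{V}_i$, so that one sprinkles $\mathcal{I}_i$ upward from level $u_i$ (shrinking $\mathcal{V}_i$) --- which is why the error in \eqref{decoup_decrease} is charged at the levels $u_i$ rather than $u_i^+$.

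\emph{Combinatorial unfolding.} Iterating \eqref{seed} expresses $G_{0,n}$ as a union, over rooted binary trees of depth $n$, of intersections of $2^n$ translated copies of $G_0$ indexed by the leaves; a union bound over the at most $|\Lambda_{x,k}|^2=(2l_0+1)^{2d}$ pairs at each of the $2^n-1$ internal nodes yields the prefactor $(2l_0+1)^{2d(2^n-1)}\le(2l_0+1)^{d\cdot 2^{n+1}}$. A straightforward induction shows that $G_{x,m}$ is measurable with respect to the configuration in $B(x,2L_m)$ (using $l_0\ge A$ so that $L_m+L_{m-1}+2L_0\le 2L_m$), so two leaf boxes whose ancestral paths first split at a scale-$k$ node are $l^\infty$-separated by at least $L_k/100-4L_{k-1}\ge L_{k-1}l_0/200$. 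It then remains to bound, for a fixed tree, $\mathbbm{P}[\mathcal{K}^{u_1^-,u_2^-}\in G_{\ell_1}\cap\dots\cap G_{\ell_{2^n}}]$ with the $G_{\ell_i}$ supported in these pairwise well-separated boxes.

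\emph{Scale-by-scale decoupling.} I would induct down the tree from the root. Since $G_0$ is increasing in $\mathcal{K}$ and $\mathcal{K}$ is coordinatewise increasing in $(\mathcal{I}_1,\mathcal{I}_2)$, every pending sub-event is increasing in $(\mathcal{I}_1,\mathcal{I}_2)$. At a scale-$k$ node, whose two pending sub-events live in regions of radius $\asymp L_{k-1}$ separated by $\asymp L_k$, apply the sprinkling coupling of Proposition 3.1 of \cite{unique K} first to $\mathcal{I}_1$ with $\mathcal{I}_2$ held fixed: off an event of probability $\delta_1^{(k)}\le 2\exp(-c\,u_1^- L_{k-1}^{d-2}l_0^{(d-2)/2})$, the restrictions of $\mathcal{I}_1^{u_1^-}$ to the two regions are dominated by independent copies at a slightly larger level; then do likewise for $\mathcal{I}_2$ with error $\delta_2^{(k)}$ written in terms of $u_2^-$. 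Because the two interlacements are decoupled in two successive conditionings, the node's errors add to $\delta_1^{(k)}+\delta_2^{(k)}$. The hypotheses $u_i^-\le(1-\epsilon)u_i$ leave enough room to choose (geometrically decreasing) sprinkling increments keeping every auxiliary level below $u_i$ along all branches at once --- this is where $l_0\ge A(d,\epsilon)$ enters --- and since $\sum_{k\ge1}2\exp(-c\,u_i^- L_{k-1}^{d-2}l_0^{(d-2)/2})$ is bounded by a constant times its first term, the error charged to each leaf is at most $\epsilon(u_1^-,L_0,l_0)+\epsilon(u_2^-,L_0,l_0)$ with $\epsilon(\cdot)$ as in \eqref{def_epsilon}. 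Multiplying out the now-independent leaves bounds the fixed-tree probability by $\big(\mathbbm{P}[\mathcal{K}^{u_1,u_2}\in G_0]+\epsilon(u_1^-,L_0,l_0)+\epsilon(u_2^-,L_0,l_0)\big)^{2^n}$ (an increasing event being more likely at higher levels), and combining with the prefactor gives \eqref{decoup_increase}.

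The main obstacle is the bookkeeping in the last step: arranging the auxiliary levels and the radii at which successive annuli are split so that the total sprinkling stays within the budget $\epsilon u_i$ on every root-to-leaf branch simultaneously while the per-scale errors telescope into the single closed form \eqref{def_epsilon}, and keeping the two independent interlacements' errors additive throughout rather than letting them compound with the super-exponential combinatorial factor. Since this is a careful but routine adaptation of the proofs of Theorem 8.5 of \cite{introduction} and Proposition 3.1 of \cite{unique K}, in writing it up I would recall those and spell out only the modifications needed for the intersection of two copies.
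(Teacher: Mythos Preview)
Your proposal is correct and follows essentially the same route as the paper: the paper's proof also consists in adapting Theorem~8.5 of \cite{introduction} by replacing the single-interlacement coupling (Theorem~7.9 there) with a coupling of two independent copies, so that the error term $\epsilon$ becomes the sum $\epsilon_1+\epsilon_2$, and noting that the remaining induction (in particular the analogue of (8.3.4) of \cite{introduction}) goes through since $a^m+b^m+c^m\le(a+b+c)^m$. Your write-up is in fact more detailed than the paper's, which simply points to the modifications needed in \cite{introduction}.
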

\begin{proof}
The proof is similar to Theorem 8.5 in \cite{introduction} despite that the claim here involves two independent interlacements. We need to change the coupling in Theorem 7.9 of \cite{introduction} to a coupling of two independent copies of the point measures there and the term $\epsilon$ there should be changed to $\epsilon_1+\epsilon_2$, where $\epsilon_1 = \epsilon({u_1}_-,{u_1}_+,S_1,S_2,U_1,U_2)$ and $\epsilon_2=\epsilon({u_2}_-,{u_2}_+,S_1,S_2,U_1,U_2)$. The term $\epsilon(u_-,n)$ in (8.1.9) and (8.1.10) of \cite{introduction} should be replaced by $\epsilon({u_1}_-,n)+\epsilon({u_2}_-,n)$. Equation (8.3.4) of \cite{introduction} is still true since $a^m+b^m+c^m \leq (a+b+c)^m$.
\end{proof}

\subsection{Estimates about SRW}
Here, we present some results about simple random walks on $\mathbb{Z}^d$ that will be used. For $x,y \in \mathbb{Z}^d$, the Green function is denoted by $G(x,y)=\sum_{n=0}^{\infty}P_x(X_n=y)$. The following lemma is very simple and we give a proof just for completeness. It is used in Section \ref{section_5} to prove that interlacements can dominate Bernoulli site percolation in a hypercube $\{0,1 \}^d$.
\begin{lemma}
\label{dominate}
There exists a constant $c_1>0$ such that for all $d \geq 3$,
\begin{equation}
\label{sum of green function}
    P_0\left[{\tau}_{\{0,1\} ^d}^+=\infty\right]>c_1.
\end{equation}
\end{lemma}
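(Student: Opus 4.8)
Write $A=\{0,1\}^d$ and let $T_A=\sum_{n\ge0}\mathbbm{1}[X_n\in A]$ be the total time the walk started at $0$ spends in $A$. I would first reduce the bound to a dimension–uniform estimate on $\mathbb{E}_0[T_A]$. The lattice symmetries preserving $A$ (coordinate permutations and the reflections $x_i\mapsto 1-x_i$) act transitively on the $2^d$ vertices of $A$, so the equilibrium measure of $A$ is uniform, $P_0[\tau_A^+=\infty]=\mathrm{cap}(A)/2^d$, and $\sum_{y\in A}G(x,y)=\mathbb{E}_x[T_A]=\mathbb{E}_0[T_A]$ for every $x\in A$, whence $\sum_{x,y\in A}G(x,y)=2^d\,\mathbb{E}_0[T_A]$. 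With the standard variational bound $\mathrm{cap}(A)\ge|A|^2/\sum_{x,y\in A}G(x,y)$ this gives $P_0[\tau_A^+=\infty]\ge 1/\mathbb{E}_0[T_A]$ (in fact an equality, also obtainable directly: every vertex of $A$ has exactly $d$ of its $2d$ neighbours in $A$, so the sojourns of the walk in $A$ are geometric of mean $2$, and an excursion decomposition with Wald's identity yields $\mathbb{E}_0[T_A]=1/P_0[\tau_A^+=\infty]$). So it is enough to bound $\mathbb{E}_0[T_A]=\sum_{z\in A}G(0,z)=\sum_{n\ge0}P_0[X_n\in A]$ by an absolute constant, uniformly in $d\ge3$.

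For $P_0[X_n\in A]$ I would use a coordinate decomposition: conditionally on the numbers $A_1,\dots,A_d$ of the first $n$ steps falling along each coordinate axis — a $\mathrm{Multinomial}(n;1/d,\dots,1/d)$ vector — the $d$ coordinate processes of $X$ are independent one–dimensional simple random walks, so $P_0[X_n\in A]=\mathbb{E}\!\big[\prod_{i=1}^d h(A_i)\big]$ with $h(a)=P[S_a\in\{0,1\}]$ for one–dimensional SRW $S$. Elementary estimates on the binomial distribution give $h(0)=1$ and $h(a)\le\min\{\tfrac12,a^{-1/2}\}$ for $a\ge1$, hence also $h(a)\le\sqrt2\,(a+1)^{-1/2}$ for all $a\ge0$. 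Multinomial counts are negatively associated, so $\mathbb{E}[\prod_i g(A_i)]\le\prod_i\mathbb{E}[g(A_i)]$ for non–increasing $g$; applying this with $h(a)\le\tfrac12$ gives $P_0[X_n\in A]\le\big(\tfrac12+\tfrac12(1-\tfrac1d)^n\big)^d\le\big(\tfrac{1+e^{-n/d}}{2}\big)^d$, and applying it with $h(a)\le\sqrt2\,(a+1)^{-1/2}$, together with Cauchy--Schwarz and $\mathbb{E}[(A_i+1)^{-1}]\le d/(n+1)$, gives $P_0[X_n\in A]\le\big(\tfrac{2d}{n+1}\big)^{d/2}$. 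Summing, the first estimate gives $P_0[X_n\in A]\le e^{-cn}$ for $n\le 2d$ (a universal $c>0$, since $u\mapsto u^{-1}\log\tfrac{1+e^{-u}}{2}$ is increasing on $(0,2]$ with negative supremum), while for $n>2d$ one has $2d/(n+1)<1$ and, comparing with $\int_{2d}^\infty x^{-d/2}\,dx$ (using $d\ge3$), $\sum_{n>2d}\big(\tfrac{2d}{n+1}\big)^{d/2}\le (2d)^{d/2}\!\int_{2d}^\infty x^{-d/2}dx=\tfrac{2d}{d/2-1}\le 12$. Hence $\mathbb{E}_0[T_A]\le C_0<\infty$ uniformly in $d$, and $P_0[\tau_A^+=\infty]\ge 1/C_0=:c_1>0$.

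The essential difficulty is \emph{the uniformity in $d$}: the naive bounds on $P_0[X_n\in A]$ are $d$–th powers of absolute constants and blow up. The fix is the two–regime split — for $n\lesssim d$ the walk has touched $\asymp n$ coordinates, each in $\{0,1\}$ with probability $\le\tfrac12$, giving $d$–uniform geometric decay in $n$; for $n\gtrsim d$ all coordinates have been visited $\asymp n/d$ times and the one–dimensional local central limit theorem forces the product to decay like $(d/n)^{d/2}$, summable with a $d$–uniform bound. The only point needing care is that the two estimates must overlap (so $2d/(n+1)<1$ takes effect before the $\tfrac12$–bound runs out), which is why one needs the sharp $a^{-1/2}$ bound on $h$; everything else is routine computation.
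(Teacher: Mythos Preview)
Your proof is correct and the overall reduction --- show $\sum_{x\in\{0,1\}^d}G(0,x)=\mathbb{E}_0[T_A]$ is bounded uniformly in $d$, then split the time variable into a small regime (where a per--coordinate ``probability $\le 1/2$'' bound gives geometric decay) and a large regime (where a local CLT bound $\sim (d/n)^{d/2}$ is summable) --- is exactly the skeleton of the paper's proof as well. The difference is in how the factorisation over coordinates is obtained. The paper passes to the continuous--time Green function representation $G(0,x)=\int_0^\infty e^{-u}\prod_i I_{x_i}(u/d)\,du$, so that the coordinates are \emph{exactly} independent and the sum becomes $d\int_0^\infty Z(u)^d\,du$ with $Z(u)=(I_0(u)+I_1(u))e^{-u}$; it then uses the Bessel--function bounds $Z(u)\le 1-Bu$ for small $u$ and $Z(u)\le u^{-1/2}$ for large $u$, which play precisely the role of your two bounds on $h$. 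Your route stays in discrete time and replaces exact independence by negative association of the multinomial counts; this costs an extra inequality but avoids the Bessel--function machinery entirely and keeps the argument purely probabilistic. Either way, the two regimes and the final constants match up (your $n\le 2d$ vs.\ $n>2d$ corresponds to the paper's split of the $u$--integral at a fixed $A$ after the substitution $u\mapsto u/d$), so the two proofs are really the same argument viewed through different lenses.
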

\begin{proof}
Thanks to symmetry and the strong Markov property, 
\begin{equation*}
    \sum_{x \in \{0,1\} ^d}G(0,x)= E_0\left[\sum_{i=0}^{\infty} \mathbbm{1}_{\left\{ X_i \in \{ 0,1 \} ^d \right\}} \right]=\frac{1}{ P_0\left[{\tau}_{\{ 0,1 \} ^d}^+=\infty \right]}.
\end{equation*}
Thus, it is sufficient to prove that
\begin{equation*}
    \sum_{x \in \{0,1\} ^d}G(0,x) < \frac{1}{c_1}.
\end{equation*}
By (2.10), p.243 in \cite{green function}, we have
\begin{equation*}
    G(0,x)=\int_{0}^{\infty}e^{-u} \prod_{i=1}^{d}I_{x_i}\left(\frac{u}{d}\right)du, \mbox{ for }x=(x_1,...,x_d) \in \mathbb{Z}^d,
\end{equation*}
where
\begin{equation*}
    I_n(u)=\frac{1}{\pi}\int_{0}^{\pi}e^{u\cos{\theta}}\cos{n\theta}d\theta, \mbox{ }u \in \mathbb{C}.
\end{equation*}
We get that
\begin{align*}
\sum_{x \in \{ 0,1\} ^d}G(0,x) &= \int_{0}^{\infty}e^{-u}\sum_{x \in \{ 0,1\} ^d} \prod_{i=1}^{d}I_{x_i}\left(\frac{u}{d}\right)du\\
&= d\int_{0}^{\infty}\left(\frac{I_0(u)+I_1(u)}{e^u}\right)^ddu.
\end{align*}
Denote $({I_0(u)+I_1(u)})/{e^u}$ by $Z(u)$. Then, $Z(0)=1$. By some easy calculations, we can take a constant $A \in (1,\infty)$ and $B=1/(4e^A) + 1/(4e^{2A})$ such that 
\begin{equation*}
    Z(u) \leq \frac{1}{\sqrt{u}} \mbox{, for all }u\geq A,
\end{equation*}
and
\begin{equation*}
    Z(u) \leq 1-Bu\mbox{, for all }u \leq A.
\end{equation*}
Therefore,
\begin{align*}
\sum_{x \in \{ 0,1 \} ^d}G(0,x) &= d\int_{0}^{A}(Z(u))^ddu + d\int_{A}^{\infty}(Z(u))^ddu \\
& \leq d\int_{0}^{A}(1-Bu)^ddu+d\int_{A}^{\infty}\left(\frac{1}{\sqrt{u}}\right)^ddu \\
& \leq \frac{1}{B}\frac{d}{d+1} +A^{-\frac{d}{2}+1}\frac{d}{\frac{d}{2}-1}.
\end{align*}
Note that $d \geq 3$, $A >1$ and $A, B$ are independent of $d$. Hence, there exists a constant $c_1>0$ independent of $d$ such that $\sum_{x \in \{ 0,1\} ^d}G(0,x) < 1/c_1$. This completes the proof of \eqref{sum of green function}.
\end{proof}
\begin{remark}
With more careful calculations, we can get that the left-hand side in \eqref{sum of green function} tends to 1/2 as $d$ tends to $\infty$. This coincides with the heuristic that whenever $X_i$ leaves $\{ 0,1\} ^d$ in high dimensions, it will not come back any more.
\end{remark}
The following notation is defined when $d=3$. Let $x=(x_1,x_2,x_3) \in \mathbb{Z}^3$. The disc centered at $x$ of radius $M$ is denoted by 
\begin{equation*}
    D(x,M)=\{ (y_1,y_2,y_3) : |y_1-x_1|,|y_2-x_2| \leq M, y_3=x_3 \}.
\end{equation*} We denote one quarter of the disc centered at $x$ of radius $M$ by 
\begin{equation*}
    D^+(x,M)=\{ (y_1,y_2,y_3) : x_1 \leq y_1 \leq x_1+M, x_2 \leq y_2 \leq x_2+M, y_3=x_3\}.
\end{equation*} The first exit time of a simple random walk is written by $\xi_m= \inf \{ i \geq 0: X_i \notin B(0,m) \}$. The following proposition is used to prove Claim (3) of Theorem \ref{1.2}, or equivalently Proposition \ref{nonpercolate_K}.
\begin{proposition}
\label{key_estimate}
For $d=3$, there exists $C_1>0$ such that for all $M \geq 1$,
\begin{equation}
\label{key_estimate_eq}
    \max_{x,y \in \partial_i B(0,2M)}P_{x,y}\left[X^1[0,\infty) \cap X^2[0,\infty) \cap \partial_i B(0,M) \neq \emptyset \right] < \frac{C_1}{\log (M)},
\end{equation}
where $X^1$ and $X^2$ are two independent simple random walks starting from $x$ and $y$.
\end{proposition}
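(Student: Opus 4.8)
For \eqref{key_estimate_eq}, the plan is to condition on the first walk, turn the two‑walk event into a single‑walk hitting problem for the trace of that walk on the sphere $\partial_i B(0,M)$, and estimate the relevant capacity by exploiting that the trace of a three‑dimensional walk on a codimension‑one sphere is ``one‑dimensional''.

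Fix $x,y\in\partial_i B(0,2M)$ and write $S=\partial_i B(0,M)$ and $A=X^1[0,\infty)\cap S$, a finite set since $S$ is finite. Conditionally on $X^1$, the event in \eqref{key_estimate_eq} is $\{X^2[0,\infty)\cap A\ne\emptyset\}$, whose probability, by the last‑exit decomposition for the transient walk $X^2$, equals $\sum_{z\in A}G(y,z)\,e_A(z)$. Hence
\begin{equation*}
P_y\big[X^2[0,\infty)\cap A\ne\emptyset\big]\ \le\ \Big(\max_{z\in S}G(y,z)\Big)\,\mathrm{cap}(A),
\end{equation*}
and since $|y|_\infty=2M$ and $|z|_\infty=M$ force $|y-z|_\infty\ge M$, the standard bound $G(a,b)\le C|a-b|_\infty^{-1}$ in $\mathbb Z^3$ gives $\max_{z\in S}G(y,z)\le C/M$. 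Taking expectations over $X^1$,
\begin{equation*}
\max_{x,y\in\partial_i B(0,2M)}P_{x,y}\big[X^1[0,\infty)\cap X^2[0,\infty)\cap S\ne\emptyset\big]\ \le\ \frac{C}{M}\ \sup_{|x|_\infty=2M}E_x\big[\mathrm{cap}(A)\big],
\end{equation*}
so it suffices to prove $E_x[\mathrm{cap}(A)]\le CM/\log M$ for every $x$ with $|x|_\infty=2M$.

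The elementary input for the capacity bound is a single‑scale estimate. For $1\le r\le M$, cover $S$ by $\asymp(M/r)^2$ boxes $Q$ of side $r$; for each one, $P_x[X^1\text{ hits }Q\cap S]\le(\max_{z\in S}G(x,z))\,\mathrm{cap}(Q\cap S)\le(C/M)\,\mathrm{cap}(Q)\le Cr/M$, using once more $G(x,\cdot)\le C/M$ on $S$ together with $\mathrm{cap}(Q)\le Cr$ for a side‑$r$ box in $\mathbb Z^3$. Thus the number $N_r$ of such boxes that meet $A$ satisfies $E_x[N_r]\le CM/r$, the quantitative form of ``$A$ is one‑dimensional at scale $r$''. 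To turn covering numbers into a capacity bound I would use the variational formula $\mathrm{cap}(A)=(\inf_\nu\mathcal E(\nu))^{-1}$, with $\mathcal E(\nu)=\sum_{z,w}\nu(z)G(z,w)\nu(w)$ over probability measures $\nu$ on $A$: from $G(z,w)\ge c\sum_{j\ge0}2^{-j}\mathbbm{1}\{|z-w|_\infty\le 2^j\}$ and a covering of $A$ by $N_{2^j}$ boxes of side $2^j$, one gets $\mathcal E(\nu)\ge c\sum_{j=0}^{\lceil\log_2 M\rceil}2^{-j}/N_{2^j}$ for every admissible $\nu$; consequently, if $N_{2^j}\le\Sigma\cdot CM/2^j$ for all $j$ with a single random factor $\Sigma\ge1$, then $\mathrm{cap}(A)\le C\Sigma M/\log M$.

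Everything thus reduces to a uniform one‑dimensionality statement, which I expect to be the main obstacle: there exists $\Sigma\ge1$, with $E_x[\Sigma]$ bounded uniformly in $M$ (ideally with uniform exponential tails), such that a.s.\ $A$ is covered by at most $\Sigma M/2^j$ boxes of side $2^j$ \emph{simultaneously} for all $0\le j\le\log_2 M$. A naive union bound over the $\asymp\log M$ scales would destroy precisely the factor we are trying to gain, so one must use the nested structure of the walk's visits to $S$. The approach I would take is a renewal/excursion decomposition: split $X^1$ into its successive excursions between the shell $\{M/2\le|x|_\infty\le 2M\}$ and the complement of the ball of radius $M$. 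By transience and standard escape estimates the number of such excursions is stochastically dominated by a geometric variable, hence has exponential tails, and within a single excursion one repeats the decomposition at the next dyadic scale. At scales $\le M$ this is essentially a flat problem, $S$ being then well approximated by a hyperplane, and this is exactly where the (quarter‑)discs $D(\cdot,M)$, $D^+(\cdot,M)$ and the exit times $\xi_m$ are the natural objects; the local fact feeding the recursion is that a three‑dimensional walk meeting a hyperplane occupies only $\le Cr$ of its sites within a window of side $r$, again with exponential tails. Carrying out this multi‑scale bookkeeping, and the hyperplane estimate it rests on, is the technically heaviest part; the remainder is the soft reduction above plus classical Green‑function and capacity facts for $\mathbb Z^3$.
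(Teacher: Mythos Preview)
Your reduction is clean and different from the paper's: conditioning on $X^1$, bounding the hitting probability by $(C/M)\,\mathrm{cap}(A)$, and then aiming for $E_x[\mathrm{cap}(A)]\le CM/\log M$ via the energy lower bound $\mathcal E(\nu)\ge c\sum_j 2^{-j}/N_{2^j}$ are all correct. The genuine gap is exactly the one you flag: you need $E[\Sigma]\le C$ for $\Sigma=\max_{0\le j\le \log_2 M} 2^jN_{2^j}/M$, while the single-scale bound only gives $E[2^jN_{2^j}/M]\le C$ for each $j$. A union bound or naive tail estimate over the $\log M$ scales costs precisely the factor you are trying to win, and your excursion sketch does not explain why the recursion terminates with a \emph{uniform} (in $M$) bound rather than accumulating logarithmic losses. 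This is not obviously fixable without substantial additional input; the nesting $N_{2r}\le N_r\le 8N_{2r}$ only gives $Y_{j+1}\in[\tfrac14 Y_j,\,2Y_j]$ and hence no control on the max over $\log M$ scales.

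The paper avoids this max-over-scales difficulty by a second-moment argument on the collision count $R_M=\sum_{i,j}\mathbbm{1}\{X^1_i=X^2_j\in\partial_iB(0,M)\}$. One checks $E[R_M]\le C$, and then shows that conditionally on the walks meeting on the sphere at a ``good'' time, $E[R_M\mid\cdot]\ge\epsilon\log M$; here ``good'' means the Green-weighted occupation $D_M=\sum_iG(0,X_i)\mathbbm{1}\{X_i\in D^+(0,M)\}$ is at least $\epsilon\log M$, which holds with probability $\ge 1-M^{-a}$ (this is Lemma~\ref{2.5}). The multi-scale step sits in that lemma: one splits $[0,\xi_M)$ into the $\asymp\log M$ annular pieces $[\xi_{2^lN_1},\xi_{2^{l+1}N_1})$, each of which contributes a bounded positive amount with probability $\ge\epsilon_1$ independently, and Hoeffding then gives $D_M\ge c\log M$ with stretched-exponential probability. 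The crucial difference from your scheme is that the paper needs a \emph{lower bound on a sum} of conditionally independent nonnegative contributions (amenable to Hoeffding), whereas you need an \emph{upper bound on a maximum} over scales, which is structurally harder and is where your argument currently stalls.
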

The order $\Omega(1/ \log(M))$ here is right. The proof is similar to Section 3.4 in \cite{intersections of random walks}. The expectation of the intersection in $\partial_i B(0,M)$ is $O(1)$. Intuitively, when $X^1$ and $X^2$ intersect in $\partial_i B(0,M)$, then they will have $O(\log(M))$ intersection points in $\partial_i B(0,M)$. Thus, the probability that $X^1$ and $X^2$ intersect in  $\partial_i B(0,M)$ is $\Omega(1 /\log(M))$. However, there are no natural stopping times, which makes the rigorous proof difficult. To prove this proposition, we will need two lemmas.
\begin{lemma}
\label{2.4}
For $d=3$, there exists $\epsilon>0$ and a positive integer $N$ such that for all $M \geq N$,
\begin{equation*}
    \min_{x \in B(0,M+1)}P_x\left[\sum_{i=0}^{\xi_{2M}-1} \mathbbm{1}_{ \left\{X_i \in D^+(0,2M) \right\}}> \epsilon M\right]> \epsilon.
\end{equation*}
\begin{proof}
Write $y$ for $(3M/2, 3M/2, 0)$. By comparing the simple random walk with Brownian motion, there exists $\mu>0$ such that for $M$ large
\begin{equation}
\label{2.4_1}
     \min_{x \in B(0,M+1)}P_x\left[\tau_{D(y,\frac{1}{4}M)}<\xi_{2M}\right]>\mu.
\end{equation}
Furthermore, we can prove the following inequality with some $\gamma>0$ and large $M$:
\begin{equation}
\label{2.4_2}
    P_0\left[\sum_{i=0}^{\xi_{M/4}} \mathbbm{1}_{  \left\{X_i \in D(0,\frac{1}{4}M) \right\}}> \gamma M\right] >\gamma.
\end{equation}
This inequality can be proved by considering the third coordinate. The movements in the third coordinate $x_3$ can be seen as a one-dimensional simple random walk $\{ Y_i \}_{i \geq 0}$ starting from $0$. By just calculating the first moment and second moment (first moment is of order $\sqrt{n}$ and second moment is of order $n$) and then using the Paley-Zygmund inequality, we can prove that there exists some $\eta>0$ such that for large $n$,
\begin{equation}
\label{2.4_2_1}
    P_0\left[\sum_{i=0}^{n} \mathbbm{1}_{\{Y_i=0\}} > \eta \sqrt{n}\right] > \eta.
\end{equation}
In addition, we can take some $c>0$ such that for large M,
\begin{equation}
\label{2.4_2_2}
P_0[Z]>1-\frac{\eta}{2},
\end{equation}
where $Z$ represents the event that $\xi_{M/4} >cM^2$ and in the first $cM^2$ moves there are at least $cM^2/4$ ones in the third coordinate.

If the event in \eqref{2.4_2_1} with $n=[cM^2/4]$ and $Z$ happen at the same time, then the event in \eqref{2.4_2} happens with $\gamma \leq  \eta \sqrt{c}/2 $. Note that with more than $\eta /2$ probability the event in \eqref{2.4_2_1} with $n=[cM^2/4]$ and $Z$ happen at the same time. Let $\gamma = \min \{ \eta /2, \eta \sqrt{c}/2  \}$ and this completes the proof of \eqref{2.4_2}.

By \eqref{2.4_1}, \eqref{2.4_2} and the strong Markov property, for any $x \in B(0,M+1)$ and large $M$,
\begin{align*}
&P_x\left[\sum_{i=0}^{\xi_{2M}-1} \mathbbm{1}_{ \left\{X_i \in D^+(0,2M) \right\}}> \gamma M\right] \\
\geq &P_x\left[\tau_{D(y,\frac{1}{4}M)}<\xi_{2M}\right] \cdot P_0\left[\sum_{i=0}^{\xi_{M/4}} \mathbbm{1}_{ \left\{X_i \in D(0,M/4) \right\}}> \gamma M\right] > \mu \cdot \gamma.
\end{align*}
In Lemma \ref{2.4}, take $\epsilon= \min \{ \gamma, \mu \gamma \}$ and $N$ large. We complete the proof.
\end{proof}
\end{lemma}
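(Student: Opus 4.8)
The plan is to decompose the event into two pieces joined by the strong Markov property. Set $y=(3M/2,3M/2,0)$; then the disc $D(y,M/4)$ lies inside $D^+(0,2M)$ and the $\ell^\infty$-ball $B(y,M/4)$ lies inside $B(0,2M)$, both with room to spare. First I would show that from any starting point the walk reaches this auxiliary disc before escaping: for some $\mu>0$, all $M$ large and all $x\in B(0,M+1)$, $P_x\big[\tau_{D(y,M/4)}<\xi_{2M}\big]\ge\mu$. Since $x$ sits at distance of order $M$ from $D(y,M/4)$ and also at distance of order $M$ from $\partial_i B(0,2M)$, and all the radii involved are comparable, this is a routine harmonic-measure bound, obtained by comparing the rescaled walk with Brownian motion (Donsker) and using that Brownian motion started in a fixed bounded set hits a fixed disc before leaving a fixed larger ball with probability bounded below.

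Next I would establish the local-time input: for some $\gamma>0$ and all $M$ large, a walk started at $0$ satisfies $P_0\big[\sum_{i=0}^{\xi_{M/4}}\mathbbm{1}\{X_i\in D(0,M/4)\}>\gamma M\big]\ge\gamma$. The point is that $\{x_3=0\}\cap B(0,M/4)=D(0,M/4)$, so what is being counted is the occupation time of $0$ by the third coordinate before the three-dimensional walk leaves $B(0,M/4)$. That coordinate evolves as a time-changed one-dimensional simple random walk $Y$ from $0$; with probability close to $1$ the exit time $\xi_{M/4}$ exceeds $cM^2$ and a constant fraction of the first $cM^2$ steps move the third coordinate, so $Y$ effectively runs for of order $M^2$ steps. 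The local time of a one-dimensional walk at the origin after $n$ steps has mean of order $\sqrt n$ and second moment of order $n$, so by Paley--Zygmund it exceeds $\eta\sqrt n$ with probability at least $\eta$ for some $\eta>0$; with $n$ of order $M^2$ this gives local time of order $M$ with positive probability, and intersecting with the high-probability event above proves the claim. Concatenating via the strong Markov property at $\tau_{D(y,M/4)}$: on the first event the walk restarts from some $z\in D(y,M/4)$, well inside $B(y,M/4)\subset B(0,2M)$; translating and applying the second claim started from $z$ (which changes only the constants) shows it then spends more than $\gamma M$ steps in $D(y,M/4)\subset D^+(0,2M)$ before leaving $B(y,M/4)$, hence before $\xi_{2M}$. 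Multiplying the two probabilities and taking $\epsilon=\min\{\gamma,\mu\gamma\}$ with $N$ large finishes the proof.

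The main obstacle is the second step: one needs a genuine \emph{lower} bound on an occupation time, which no expectation computation can give. The way around it is to isolate the recurrent one-dimensional coordinate, whose local time at $0$ concentrates well enough — second moment comparable to the square of the mean — for Paley--Zygmund to force it above a constant multiple of its mean with positive probability, while also ensuring that this coordinate actually runs for of order $M^2$ steps before the three-dimensional walk escapes $B(0,M/4)$. A minor secondary point is the geometric bookkeeping: verifying $D(y,M/4)\subset D^+(0,2M)$ and $B(y,M/4)\subset B(0,2M)$, and noting that restarting from an arbitrary point of $D(y,M/4)$ instead of its centre costs only bounded factors in the hitting and exit estimates.
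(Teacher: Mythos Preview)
Your proposal is correct and follows essentially the same approach as the paper: the same auxiliary point $y=(3M/2,3M/2,0)$, the same two-step decomposition via the strong Markov property at $\tau_{D(y,M/4)}$, the same Paley--Zygmund argument on the one-dimensional local time of the third coordinate, and even the same final choice $\epsilon=\min\{\gamma,\mu\gamma\}$. If anything, your geometric bookkeeping (verifying $D(y,M/4)\subset D^+(0,2M)$, $B(y,M/4)\subset B(0,2M)$, and that restarting from an arbitrary $z\in D(y,M/4)$ rather than $y$ only affects constants) is spelled out more carefully than in the paper.
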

We consider the random variable $D_M$ defined by
\begin{equation*}
    D_M=\sum_{i=0}^{\infty}G\left(0,X_i \right) \mathbbm{1}_{\left\{ X_i \in D^+(0,M) \right\}}.
\end{equation*}
\begin{lemma}
\label{2.5}
For $d=3$, there exist two constants $a,\epsilon > 0$ and and a positive integer $N$ such that for all $M \geq N$
\begin{equation*}
P_0\left[D_M \geq \epsilon \log (M)\right] \geq 1-\frac{1}{M^a}.
\end{equation*}
\end{lemma}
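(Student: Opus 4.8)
The plan is to accumulate the sum defining $D_M$ one dyadic scale at a time, invoking Lemma~\ref{2.4} at each scale and then a large-deviations estimate to guarantee that a positive proportion of the scales genuinely contribute. Fix $k_0$ with $2^{k_0-1}\geq N$, where $N$ is the threshold in Lemma~\ref{2.4}, and set $K=K(M)=\lfloor\log_2 M\rfloor$, so that $K-k_0$ is comparable to $\log M$ for $M$ large (enlarging $N$ so that $K\geq k_0$). For $k_0\leq k\leq K$ I look at the successive exit times $\xi_{2^{k-1}}$ and $\xi_{2^k}$. Since $d=3$ the walk is transient, these are finite a.s.; moreover $\xi_{2^{k-1}}\leq\xi_{2^k}$ because leaving $B(0,2^k)$ forces having already left $B(0,2^{k-1})$, and $X_{\xi_{2^{k-1}}}\in B(0,2^{k-1}+1)\subseteq B(0,2^k)$ since the steps are nearest-neighbor.

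Applying the strong Markov property at time $\xi_{2^{k-1}}$ and then Lemma~\ref{2.4} with its $M$ replaced by $2^{k-1}$ to the restarted walk (which starts in $B(0,2^{k-1}+1)$ and whose exit time from $B(0,2^k)$ equals $\xi_{2^k}-\xi_{2^{k-1}}$), I obtain that the event
\[
E_k:=\Big\{\ \#\big\{\, i\in[\xi_{2^{k-1}},\xi_{2^k})\ :\ X_i\in D^+(0,2^k)\,\big\}\ >\ \epsilon\,2^{k-1}\ \Big\}
\]
satisfies $P_0[E_k\mid\mathcal{F}_{\xi_{2^{k-1}}}]>\epsilon$ a.s., with $\epsilon$ the constant of Lemma~\ref{2.4}, and that $E_k$ is measurable with respect to the trajectory up to time $\xi_{2^k}$. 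Next I turn each $E_k$ into a fixed lower bound on the contribution to $D_M$. On $E_k$ every relevant index $i$ has $X_i\in D^+(0,2^k)\subseteq B(0,2^k)$, hence $G(0,X_i)\geq c(1+|X_i|_\infty)^{-1}\geq c'2^{-k}$ by the classical lower bound on the three-dimensional Green function; and $D^+(0,2^k)\subseteq D^+(0,M)$ since $2^k\leq M$, so all these indices are counted in $D_M$. Therefore the part of the sum $D_M$ coming from the window $[\xi_{2^{k-1}},\xi_{2^k})$ is at least $\epsilon\,2^{k-1}\cdot c'2^{-k}=\delta$ for a constant $\delta>0$, and because the windows $[\xi_{2^{k-1}},\xi_{2^k})$, $k_0\leq k\leq K$, are pairwise disjoint,
\[
D_M\ \geq\ \delta\sum_{k=k_0}^{K}\mathbbm{1}_{E_k}\ =:\ \delta\,\mathcal{N}.
\]

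Finally, since $P_0[E_k\mid\mathcal{F}_{\xi_{2^{k-1}}}]>\epsilon$ holds deterministically and $E_k\in\mathcal{F}_{\xi_{2^k}}$, the random variable $\mathcal{N}$ stochastically dominates a $\mathrm{Binomial}(K-k_0+1,\epsilon)$ variable; a Chernoff bound then gives $P_0\big[\mathcal{N}<\tfrac{\epsilon}{2}(K-k_0+1)\big]\leq e^{-c(K-k_0+1)}\leq M^{-a}$ for a suitable $a>0$ once $M$ is large, while on the complementary event $D_M\geq\tfrac{\delta\epsilon}{2}(K-k_0+1)\geq\epsilon_0\log M$; relabelling $\epsilon_0$ as $\epsilon$ yields the claim.

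I do not expect a genuine obstacle here. The only slightly delicate points are the bookkeeping of the nested exit times, so that the restarted walk lines up exactly with the hypothesis of Lemma~\ref{2.4}, and the (standard) observation that only a \emph{lower} bound on the Green function over $B(0,2^k)$ is needed rather than two-sided control; the dependence between the events $E_k$ across scales is harmless precisely because the one-step conditional estimate $P_0[E_k\mid\mathcal{F}_{\xi_{2^{k-1}}}]>\epsilon$ is uniform, which is all that the stochastic-domination step uses.
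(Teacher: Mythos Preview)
Your proof is correct and follows essentially the same strategy as the paper: decompose $[0,\xi_M)$ into dyadic annular shells via the exit times $\xi_{2^{k-1}},\xi_{2^k}$, apply Lemma~\ref{2.4} and the strong Markov property at each scale to get a uniform lower bound on the conditional probability that the shell contributes $\Omega(1)$ to $D_M$ (using $G(0,x)\gtrsim |x|_\infty^{-1}$), and finish with a large-deviations bound for the number of successful scales. Your phrasing of the last step via stochastic domination by a binomial is in fact slightly more accurate than the paper's, which simply calls the shell contributions ``independent'' before invoking Hoeffding.
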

\begin{proof}
Denote the $\epsilon$ and $N$ in Lemma \ref{2.4} by $\epsilon_1$ and $N_1$. We decompose $\left[0,\xi_{M}\right)$ into $k$ disjoint intervals $\left[\xi_{N_1}, \xi_{2N_1}\right) , \left[\xi_{2N_1}, \xi_{4N_1}\right), ..., \left[\xi_{2^{k-1}N_1}, \xi_{2^kN_1}\right)$, where $k=\left[\log_2(M/N_1)\right]$. Recall that $G(0,x) \geq C/|x|_{\infty}$ in $d=3$. Therefore, 
\begin{align*}
    D_M &=\sum_{i=0}^{\infty}G\left(0,X_i^1\right) \mathbbm{1}_{\left\{ X_i^1 \in D^+(0,M) \right\}}\\
    &\geq \sum_{l=0}^{k-1}\sum_{i=\xi_{2^lN_1}}^{\xi_{2^{l+1}N_1}-1}G\left(0,X_i^1\right) \mathbbm{1}_{\left\{ X_i^1 \in D^+(0,2^{l+1}N_1) \right\}}\\
    &\geq  \sum_{l=0}^{k-1}{\left( \sum_{i=\xi_{2^lN_1}}^{\xi_{2^{l+1}N_1}-1}\frac{C}{2^{l+1}N_1} \mathbbm{1}_{\left\{ X_i^1 \in D^+(0,2^{l+1}N_1) \right\} } \right)}.
\end{align*}
By the strong Markov property and Lemma \ref{2.4}, each of the $k$ terms above is independent and has more than $\epsilon_1$ probability to be more than $C/(2^{l+1}N_1) \cdot \epsilon_1 2^l N_1= C \epsilon_1 /2$. Thus, by the Hoeffding's inequality, for large $k$
\begin{equation*}
    P_0\left[D_M \geq \frac{C \epsilon_1^2}{4}k \right] \geq 1-e^{-ck}.
\end{equation*}
In Lemma \ref{2.5}, take $\epsilon <\frac{C\epsilon_1^2}{4}\log_2e$, $a < c\log_2e$ and $N$ large. We complete the proof.
\end{proof}
With the above two lemmas, we can complete the proof of Proposition \ref{key_estimate}.
\begin{proof}[Proof of Proposition \ref{key_estimate}]
We will use the $a$ and $\epsilon$ in Lemma \ref{2.5}. Take two points $x$ and $y$ in $\partial_i B(0,2M)$. The constants below are all independent of $x, y$ and $M$. Now, $X^1$ and $X^2$ below are two independent simple random walks started at $x$ and $y$. Define $R_M$ to be
\begin{equation*}
    R_M=\sum_{i=0}^{\infty}\sum_{j=0}^{\infty} \mathbbm{1}_{\left\{ X_i^1=X_j^2 \in \partial_i B(0,M) \right\}}.
\end{equation*}
An easy calculation shows that 
\begin{equation*}
    E_{x,y}\left[R_M\right] \leq C \mbox{, where $C$ is a constant independent of }x,y \mbox{ and }M.
\end{equation*}
Let $\zeta$ be the stopping time 
\begin{equation*}
    \zeta =\inf \left\{ i \geq 0 : X^1_i \in X^2[0,\infty) \cap \partial_i B(0,M) \right\}.
\end{equation*}
Define $\sigma$ as
\begin{equation*}
    \sigma = \inf \left\{ j \geq 0: X^1_{\zeta}=X^2_j \right\}.
\end{equation*}
For any vertex $z$ in $\partial_i B(0,M)$, we can find a $M \times M$ disc in $\partial_i B(0,M)$ with $z$ as one of its corners in a deterministic way. We write $D(z)$ for this disc. The time $j$ is called good if
\begin{equation*}
    D_{j,M}= \sum_{i=0}^{\infty}G \left(X_j^2,X_{j+i}^2\right) \mathbbm{1}_{\left\{ X_{j+i}^2 \in D(X_j^2) \right\} } \geq \epsilon \log(M)
\end{equation*}
and bad otherwise. By the strong Markov property applied to $X^1$,
\begin{equation*}
    E_{x,y}\left[R_M|\zeta < \infty , \sigma \mbox{ is good}\right] \geq \epsilon \log(M).
\end{equation*}
Therefore,
\begin{align*}
    P_{x,y}\left[\zeta < \infty , \sigma \mbox{ is good}\right] &\leq E_{x,y}\left[R_M\right] \left[E_{x,y} \left[R_M|\zeta < \infty , \sigma \mbox{ is good}\right]\right]^{-1}\\
    &\leq \frac{C}{\log(M)}.
\end{align*}
By Lemma \ref{2.5}, symmetry and the Markov property applied to $X^2$, we have $P_{x,y}[D_{j,M} < \epsilon \log (M)] \leq 1/M^a$ for large $M$. Thus, for large $M$
\begin{align*}
    P_{x,y}[\zeta <\infty, \sigma \mbox{ is bad}] &\leq
    \sum_{i=0}^{\infty}\sum_{j=0}^{\infty}P_{x,y}\left[X^1_i=X^2_j \in \partial_i B(0,M),D_{j,M}< \epsilon \log (M)\right]\\
    &=
    \sum_{i=0}^{\infty}\sum_{j=0}^{\infty}P_{x,y}\left[X^1_i=X^2_j \in \partial_i B(0,M)\right]P_{x,y}\left[D_{j,M} < \epsilon \log (M)\right]\\
    &\leq \frac{1}{M^a} E_{x,y}\left[R_M\right] \leq \frac{C}{M^a}.
\end{align*}
So,
\begin{equation*}
\begin{split}
    &P_{x,y}\left[X^1[0,\infty) \cap X^2[0,\infty) \cap \partial_i B(0,M) \neq \emptyset \right] =P_{x,y}[\zeta <\infty] \\
    =& P_{x,y}[\zeta <\infty, \sigma \mbox{ is good}]+ P_{x,y}[\zeta <\infty, \sigma \mbox{ is bad}] \leq \frac{C}{\log (M)}.
\end{split}
\end{equation*}
The above inequality holds when $M$ is large. Enlarge $C$ if necessary such that for any $M$, inequality \eqref{key_estimate_eq} holds.
\end{proof}
\section{Percolative properties of $\mathcal{V}$}
\label{section_3}
In this section, we consider the percolative properties of $\mathcal{V}$ and prove Theorem \ref{1.1} which is split into four parts, namely four propositions below.
\begin{proposition}
\label{unique_V}
There exists at most one infinite component in $\mathcal{V}$ a.s.
\end{proposition}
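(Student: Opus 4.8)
\emph{Proof strategy.} The plan is to run a Burton--Keane uniqueness argument, in the variant adapted to random interlacements in \cite{unique V}, taking advantage of the fact that $\mathcal V=\mathcal V^{u_1}_1\cup\mathcal V^{u_2}_2$. First record the soft ingredients: viewed as a random element of $\{0,1\}^{\mathbb Z^d}$, the law of $\mathcal V$ is invariant under the lattice translations and ergodic --- indeed mixing, since each occupation field $\mathcal I^{u_i}_i$ is mixing with respect to $\mathbb Z^d$, hence so is $\mathbbm{P}=P^{u_1}\otimes P^{u_2}$ under the diagonal action, and $\mathcal V$ is a translation--equivariant factor of it. Consequently the number $N$ of infinite connected components of $\mathcal V$ is $\mathbbm{P}$--a.s.\ a deterministic constant $k\in\{0,1,2,\dots\}\cup\{\infty\}$, and it remains to exclude $2\le k<\infty$ and $k=\infty$.

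The key local input is a finite--energy estimate. For a box $B=B(0,r)$ put $\mathcal E_B=\{\mathcal I^{u_1}_1\cap B=\emptyset\}$; on $\mathcal E_B$ the whole of $B$ is vacant already for $\mathcal V_1$, hence for $\mathcal V$. Decomposing the trajectory process $\mu$ of the first interlacement as $\mu=(\mu-\mu_B)+\mu_B$, the two summands are independent Poisson processes, the event $\mathcal E_B=\{\mu_B=0\}$ has probability $e^{-u_1\,\mathrm{cap}(B)}>0$, and it is independent of $\mu-\mu_B$ and of the second interlacement. What is actually needed is the conditional form of this fact: because the trajectories in $\mu_B$ also occupy sites outside $B$, one cannot simply condition on $\mathcal V$ restricted to $B^c$, and this is exactly where the finite--energy property of interlacements from \cite{unique V}, applied to $\mathcal I^{u_1}_1$, is used --- it provides a uniform lower bound $\delta=\delta(r,u_1,d)>0$ for the conditional probability of $\mathcal E_B$ given information that encodes the connectivity of $\mathcal V$ away from $B$ but is unaffected by the trajectories we delete. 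Two features make this convenient here: forcing $B$ to be $\mathcal V$--vacant only requires controlling one of the two interlacements, and deleting $\mu_B$ only adds vacant sites, so it can merge components of $\mathcal V$ but never split one.

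With this input the classical dichotomy runs as usual. If $2\le k<\infty$, choose $r$ so large that with positive probability all $k$ infinite components meet $B(0,r)$ --- possible because $\{N\ge k\}=\bigcup_{r}\{B(0,r)\text{ meets at least }k\text{ infinite components}\}$ and $N=k$ a.s.; on that event the finite--energy modification forces $B(0,r)$ vacant without destroying these components and merges them into one, yielding with positive probability an event on which $\mathcal V$ has at most $k-1$ infinite components, contradicting $N=k$ a.s. If $k=\infty$, one works with trifurcation boxes (boxes whose removal splits the infinite component through them into at least three infinite pieces): the same modification, applied to a box meeting at least three infinite components, shows a trifurcation box occurs with positive probability, hence by translation invariance with positive density; but a standard amenability count bounds the number of disjoint trifurcation boxes in $B(0,L)$ by $O(L^{d-1})$, which is incompatible with a positive density on $\mathbb Z^d$. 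Hence $k\le 1$, which is the assertion.

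The step I expect to be the main obstacle is making the finite--energy modification rigorous in the presence of the infinite--range correlations of interlacements --- one cannot naively condition on the configuration of $\mathcal V$ outside $B$ --- which is precisely the technical heart of \cite{unique V}; we import it rather than reprove it, observing that the union structure of $\mathcal V$ makes this step, if anything, easier than for a single vacant set.
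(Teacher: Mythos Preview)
Your overall architecture --- translation invariance and ergodicity force $N$ to be an a.s.\ constant, then rule out $2\le k<\infty$ and $k=\infty$ by a Burton--Keane argument adapted from \cite{unique V} --- is exactly that of the paper. The difference, and the gap, lies in the local modification you propose.

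You want to force $B$ vacant by realising the event $\mathcal E_B=\{\mu_B=0\}$, i.e.\ by \emph{deleting} all trajectories of the first interlacement that hit $B$. This is not what \cite{unique V} (nor the present paper) does: there the surgery is a map $\phi$ on trajectory space that rewrites each excursion of a trajectory \emph{inside} $K$ (to avoid a prescribed set $U$, or to fill $K\setminus U$) while leaving the trajectory unchanged outside $K$; absolute continuity of the push-forward then transfers probability. The crucial consequence is that $\mathcal V\setminus K$ is literally the same before and after the surgery, so the infinite components outside $K$ are frozen and the count is controlled.

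Under your deletion, the trajectories in $\mu_B$ visit infinitely many sites outside $B$, and removing them enlarges $\mathcal V$ there as well. Your monotonicity remark (``only adds vacant sites, so it can merge but never split'') is correct as far as it goes, but it does not prevent the birth of \emph{new} infinite components in $\mathcal V(T\omega)$ that did not exist in $\mathcal V(\omega)$: a finite cluster of $\mathcal V(\omega)$, once joined to the freshly vacated range of a deleted walk, can become infinite without touching $B$. Hence the step ``at most $k-1$ infinite components after modification'' is unjustified, and likewise the claim that $B$ becomes a trifurcation box (the three original components may reconnect outside $B$ through the newly vacated sites). Saying you will ``import'' the fix from \cite{unique V} does not close the gap, because that fix is precisely the inside-the-box rewriting you did not describe; the $\mathcal E_B$-conditioning picture and the $\phi$-map picture are genuinely different mechanisms. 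To repair the argument, replace the deletion by the trajectory-modification maps $\phi_1,\phi_2,\phi_3$ as in the paper: they keep $\mathcal V$ outside $K$ fixed, and the two-interlacement setting only requires applying the same $\phi$ to both Poisson processes.
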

\begin{proposition}
\label{existence_V}
If $u_1<u_*$ or $u_2<u_*$, then $\mathcal{V}$ a.s.\ has a unique infinite component.
\end{proposition}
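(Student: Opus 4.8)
\emph{Proof strategy.} The whole proof rests on monotonicity in the two interlacement sets. Since $\mathcal{V}=\mathcal{V}^{u_1}_1\cup\mathcal{V}^{u_2}_2\supseteq\mathcal{V}^{u_1}_1$, every nearest‑neighbor infinite connected subset of $\mathcal{V}^{u_1}_1$ is a nearest‑neighbor infinite connected subset of $\mathcal{V}$, hence lies in an infinite component of $\mathcal{V}$. So I would reduce the statement to the assertion that $\mathcal{V}^{u_1}_1$ \emph{alone} a.s.\ contains an infinite component whenever $u_1<u_*$; the case $u_2<u_*$ is handled by the symmetric argument, and the uniqueness half of the claim is exactly Proposition \ref{unique_V}.

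For the reduced assertion I would proceed as follows. By the definition \eqref{def_u*} of $u_*$, the hypothesis $u_1<u_*$ gives $P^{u_1}\bigl[0\overset{\mathcal{V}^{u_1}_1}{\longleftrightarrow}\infty\bigr]>0$. The event that $\mathcal{V}^{u_1}_1$ possesses some infinite component is measurable with respect to $\mathcal{F}_1$ and invariant under every lattice shift $\theta_x$, $x\in\mathbb{Z}^d$; since random interlacements are ergodic (in fact mixing) under these shifts, this event has $P^{u_1}$-probability $0$ or $1$, and the displayed positivity forces the value $1$. (Alternatively, one may simply invoke the well-known fact, recorded in the references cited in Section \ref{section_2}, that $\mathcal{V}^{u}$ a.s.\ percolates for every $u<u_*$.) Transferring this to the product space $(\Omega,\mathcal{F},\mathbbm{P})$ via the definition of $\mathbbm{P}=P^{u_1}\otimes P^{u_2}$, we conclude that $\mathbbm{P}$-a.s.\ $\mathcal{V}^{u_1}_1$, and therefore $\mathcal{V}$, has at least one infinite component. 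Combining with Proposition \ref{unique_V} then yields exactly one infinite component a.s.

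There is essentially no hard step here: the result is an immediate consequence of the definition of $u_*$, the trivial inclusion $\mathcal{V}\supseteq\mathcal{V}^{u_1}_1$, and the already-established uniqueness in Proposition \ref{unique_V}. The only point that deserves a word of care is the passage from ``the origin percolates with positive probability'' to ``an infinite component exists almost surely'', which requires the $0$–$1$ law coming from ergodicity of interlacements rather than positivity alone.
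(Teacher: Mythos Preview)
Your proposal is correct and follows essentially the same approach as the paper: the paper's proof is a one-liner observing that $\mathcal{V}=\mathcal{V}^{u_1}_1\cup\mathcal{V}^{u_2}_2$ and that $\mathcal{V}^u$ a.s.\ has an infinite cluster for $u<u_*$ by \eqref{def_u*}. Your write-up merely unpacks this, making explicit the $0$--$1$ law step via ergodicity and the appeal to Proposition~\ref{unique_V} for uniqueness, but there is no substantive difference.
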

\begin{proposition}
\label{nonpercolate_V}
Given $u_1>u_{**}$ there exists $C=C(u_1)$ such that for all $u_2>C$, there are a.s.\ no infinite components in $\mathcal{V}$.
\end{proposition}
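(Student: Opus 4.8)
The plan is to run the renormalisation scheme underlying Proposition~\ref{decoupling} with a \emph{decreasing} seed event that records a local crossing of $\mathcal{V}$, and to control the associated ``trigger'' probability by first taking $u_2$ large (so that $\mathcal{V}_2$ is empty in the seed box) and then taking $L_0$ large (so that $\mathcal{V}_1$, whose base level will exceed $u_{**}$, fails to cross the seed annulus). In this way the obstruction that $\mathcal{V}=\mathcal{V}_1\cup\mathcal{V}_2$ is a union — which prevents feeding the decoupling inequalities a single interlacement — is bypassed by absorbing $\mathcal{V}_2$ into the error term.

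Fix $\epsilon>0$ small enough that $\bar u_1:=u_1/(1+\epsilon)>u_{**}$, let $A=A(d,\epsilon)$ be the integer of Proposition~\ref{decoupling}, set $l_0:=A$, and put $\delta_0:=(2l_0+1)^{-2d}$. With $L_0$ still free, take as seed event
\[
  G_x=\bigl\{\,B(x,L_0)\overset{\mathcal{V}\cap B(x,2L_0)}{\longleftrightarrow}\partial_i B(x,2L_0)\,\bigr\},
\]
which is measurable with respect to $\mathcal{K}\cap B(x,2L_0)$, shift-invariant, and decreasing in $\mathcal{K}$ (equivalently increasing in $\mathcal{V}$), so that part~2 of Proposition~\ref{decoupling} applies to it. The point of this choice is the inclusion
\[
  \{\mathcal{K}^{\bar u_1,\bar u_2}\in G_0\}\ \subseteq\ \bigl\{\mathcal{V}^{\bar u_2}_2\cap B(0,2L_0)\neq\emptyset\bigr\}\ \cup\ \bigl\{B(0,L_0)\overset{\mathcal{V}^{\bar u_1}_1}{\longleftrightarrow}\partial_i B(0,2L_0)\bigr\},
\]
valid because on the complement of the first event $\mathcal{V}\cap B(0,2L_0)=\mathcal{V}^{\bar u_1}_1\cap B(0,2L_0)$, so any $\mathcal{V}$-crossing inside $B(0,2L_0)$ is already a $\mathcal{V}^{\bar u_1}_1$-crossing.

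Now fix the constants as follows. Since $\bar u_1>u_{**}$, definition \eqref{def_u**} allows choosing $L_0$ arbitrarily large with $\mathbbm{P}\bigl[B(0,L_0)\overset{\mathcal{V}^{\bar u_1}_1}{\longleftrightarrow}\partial_i B(0,2L_0)\bigr]<\delta_0/4$; as $\bar u_1,l_0$ are by now fixed and $\epsilon(u,L_0,l_0)\to0$ when $L_0\to\infty$, we may enlarge $L_0$ so that also $\epsilon(\bar u_1,L_0,l_0)<\delta_0/4$. Next, by \eqref{def_RI_2} and a union bound over the finitely many vertices of $B(0,2L_0)$, $\mathbbm{P}\bigl[\mathcal{V}^{u}_2\cap B(0,2L_0)\neq\emptyset\bigr]=\mathbbm{P}\bigl[B(0,2L_0)\not\subseteq\mathcal{I}^{u}_2\bigr]\to0$ and $\epsilon(u,L_0,l_0)\to0$ as $u\to\infty$, so one can fix $\bar u_2^{\,*}$ with both $<\delta_0/4$ for every $u\geq\bar u_2^{\,*}$; put $C=C(u_1):=(1+\epsilon)\bar u_2^{\,*}$. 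For $u_2>C$ set $\bar u_2:=u_2/(1+\epsilon)>\bar u_2^{\,*}$; combining the inclusion with the four bounds just obtained gives $q:=\mathbbm{P}[\mathcal{K}^{\bar u_1,\bar u_2}\in G_0]+\epsilon(\bar u_1,L_0,l_0)+\epsilon(\bar u_2,L_0,l_0)<\delta_0=(2l_0+1)^{-2d}$, and \eqref{decoup_decrease} applied with base levels $(\bar u_1,\bar u_2)$ and sprinkled levels $\bigl((1+\epsilon)\bar u_1,(1+\epsilon)\bar u_2\bigr)=(u_1,u_2)$ yields, for all $n\geq0$,
\[
  \mathbbm{P}[\mathcal{K}^{u_1,u_2}\in G_{0,n}]\ \leq\ (2l_0+1)^{d\cdot2^{n+1}}q^{2^n}=\bigl((2l_0+1)^{2d}q\bigr)^{2^n},
\]
which tends to $0$ as $n\to\infty$ since $(2l_0+1)^{2d}q<1$. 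Finally, to conclude the proposition it suffices, by translation invariance and a countable union over the starting vertex, to show $\mathbbm{P}[0\overset{\mathcal{V}}{\longleftrightarrow}\infty]=0$: given an infinite self-avoiding nearest-neighbour path $\gamma\subseteq\mathcal{V}$ emanating from $0$, every $x\in\gamma$ lies in $B(x,L_0)$ while $\gamma$, being infinite and connected, exits $B(x,2L_0)$, so $G_x$ holds along $\gamma$; as the initial segment of $\gamma$ up to its first exit from $B(0,L_n)$ is a path from $0$ to $\partial_i B(0,L_n)$ all of whose vertices satisfy $G_x$, the claim following \eqref{seed} forces $G_{0,n}$, whence $\mathbbm{P}[0\overset{\mathcal{V}}{\longleftrightarrow}\infty]\leq\inf_{n}\mathbbm{P}[\mathcal{K}^{u_1,u_2}\in G_{0,n}]=0$.

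I expect the only genuinely delicate point to be the local ``trigger'' estimate: for the union $\mathcal{V}=\mathcal{V}_1\cup\mathcal{V}_2$ the crossing event is controlled by neither interlacement on its own, so the decoupling inequalities cannot be applied to one of them directly. The inclusion above is what dissolves this — emptiness of $\mathcal{V}_2$ in the \emph{fixed} finite box $B(0,2L_0)$ is cheap once $u_2$ is large, and the remaining substantive input is exactly the defining property \eqref{def_u**} of $u_{**}$ for $\mathcal{V}_1$. Everything else is by-now standard renormalisation bookkeeping, parallel to its use in Section~2 of \cite{percolation for vacant set} and Chapter~8 of \cite{introduction}.
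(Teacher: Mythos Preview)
Your proof is correct and follows the same renormalisation strategy as the paper: the same decreasing seed event (a $\mathcal{V}$-crossing of the seed annulus), the same inclusion splitting the trigger into a $\mathcal{V}_1$-crossing term and a ``$\mathcal{V}_2$ nonempty in the box'' term, and the same appeal to the decoupling inequality \eqref{decoup_decrease} together with the path-implies-$G_{0,n}$ claim after \eqref{seed}.

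The one substantive difference is in how you bound the $\mathcal{V}_1$-crossing trigger. The paper invokes the stretched-exponential decay of Theorem~3.1 in \cite{soft local times} to get $\mathbbm{P}\bigl[B(0,L_0)\overset{\mathcal{V}_1^h}{\longleftrightarrow}\partial_iB(0,2L_0)\bigr]\leq Ce^{-L_0^c}$ for \emph{all} $L_0$, whereas you use only the defining property \eqref{def_u**} of $u_{**}$, which merely guarantees a subsequence of scales $L_0$ along which the crossing probability is below $\delta_0/4$. Since the other constraints on $L_0$ (namely $\epsilon(\bar u_1,L_0,l_0)<\delta_0/4$) hold for all sufficiently large $L_0$, picking one scale from that subsequence suffices, and your argument goes through. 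This is a modest but genuine simplification: you avoid importing the sharp decay result and work purely from the definition of $u_{**}$. The trade-off is that the paper's route immediately gives a quantitative stretched-exponential bound on $\mathbbm{P}[G_{0,n}]$ uniform in $L_0$, which feeds into the sharper connectivity estimates of Remark~\ref{exponential decay}; your version yields only the qualitative conclusion.
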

\begin{proposition}
\label{phase curve}
There exist a constant $u^+ \in [u_* ,u_{**}]$ and a decreasing function $\Gamma:[u^+, + \infty) \rightarrow [u^+, + \infty]$ $($only $\Gamma(u^+)$ can be $+\infty)$ such that $\mathcal{V}$ a.s.\ has a unique infinite component when \begin{itemize} \item[i.] $u_1<u^+$; \item[ii.] $u_1\geq u^+$ and $u_2<\Gamma(u_1)$,\end{itemize} and $\mathcal{V}$ a.s.\ has no infinite components when $u_1 \geq u^+$ and $u_2>\Gamma(u_1)$.
\end{proposition}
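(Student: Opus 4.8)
The plan is to deduce Proposition \ref{phase curve} from Propositions \ref{existence_V} and \ref{nonpercolate_V} by a monotonicity argument, using that percolation of $\mathcal{V}$ is a \emph{decreasing} event in the pair $(u_1,u_2)$. First I would invoke the standard monotone coupling of random interlacements: on a common probability space one realises, for each index $j\in\{1,2\}$, all the sets $(\mathcal{I}^u_j)_{u>0}$ so that $\mathcal{I}^u_j\subseteq\mathcal{I}^{u'}_j$ for $u\le u'$. Then $\mathcal{V}^{u_1,u_2}=\mathcal{V}^{u_1}_1\cup\mathcal{V}^{u_2}_2$ decreases in $(u_1,u_2)$, and since ``the vertex set contains an infinite connected component'' is an increasing event, $\{\mathcal{V}^{u_1,u_2}\text{ has an infinite component}\}$ decreases in $(u_1,u_2)$ in this coupling. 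In particular, if $\mathcal{V}^{u_1,u_2}$ has an infinite component a.s.\ and $u_1'\le u_1$, $u_2'\le u_2$, then so does $\mathcal{V}^{u_1',u_2'}$; and if $\mathcal{V}^{u_1,u_2}$ a.s.\ has no infinite component, the same holds for all larger parameters. I will also use that, by the ergodicity of the pair $(\mathcal{I}^{u_1}_1,\mathcal{I}^{u_2}_2)$ under lattice translations, the shift-invariant event $\{\mathcal{V}^{u_1,u_2}\text{ has an infinite component}\}$ has probability $0$ or $1$ for every fixed $(u_1,u_2)$; combined with Proposition \ref{unique_V} we may then say that $\mathcal{V}$ either \emph{percolates} (a.s.\ a unique infinite component) or does not. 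Finally I will use the obvious symmetry: the two interlacements are independent copies of the same object, so swapping their parameters does not change the law, i.e.\ $\mathcal{V}^{u_1,u_2}$ and $\mathcal{V}^{u_2,u_1}:=\mathcal{V}^{u_2}_1\cup\mathcal{V}^{u_1}_2$ have the same distribution.

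Next I would introduce $u^+$. Let $\mathcal{N}=\{(u_1,u_2):\mathcal{V}^{u_1,u_2}\text{ does not percolate}\}$; by the monotonicity above $\mathcal{N}$ is an up-set, and $\mathcal{N}\ne\emptyset$ by Proposition \ref{nonpercolate_V}. Set $u^+=\inf\{u_1\ge 0:\ (u_1,u_2)\in\mathcal{N}\text{ for some }u_2\ge 0\}$. By Proposition \ref{existence_V}, $\mathcal{V}^{u_1,u_2}$ percolates whenever $u_1<u_*$, so no such pair lies in $\mathcal{N}$ and $u^+\ge u_*$; by Proposition \ref{nonpercolate_V}, for every $u_1>u_{**}$ some pair $(u_1,u_2)$ lies in $\mathcal{N}$, so $u^+\le u_{**}$. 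Hence $u^+\in[u_*,u_{**}]$. Moreover, by the definition of $u^+$, if $u_1<u^+$ then $(u_1,u_2)\notin\mathcal{N}$ for every $u_2$, i.e.\ $\mathcal{V}$ percolates for all $u_2$; with Proposition \ref{unique_V} this is case (i).

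Then I would define, for $u_1\ge u^+$, the phase curve $\Gamma(u_1)=\sup\{u_2\ge 0:\ \mathcal{V}^{u_1,u_2}\text{ percolates}\}$, and verify the asserted properties. \emph{Range:} if $u_2<u^+$ then, by the definition of $u^+$, the pair $(u_2,u_1)\notin\mathcal{N}$, and by the symmetry $\mathcal{V}^{u_1,u_2}\overset{d}{=}\mathcal{V}^{u_2,u_1}$ we get that $\mathcal{V}^{u_1,u_2}$ percolates; hence $[0,u^+)\subseteq\{u_2:\mathcal{V}^{u_1,u_2}\text{ percolates}\}$ (this set is nonempty since $u^+\ge u_*>0$), so $\Gamma(u_1)\ge u^+$ and $\Gamma$ takes values in $[u^+,+\infty]$. \emph{Finiteness:} if $u_1>u^+$ then, since $\{a:\exists b,(a,b)\in\mathcal{N}\}$ is an up-set with infimum $u^+$, there is $u_2^0$ with $(u_1,u_2^0)\in\mathcal{N}$; by monotonicity $\mathcal{V}^{u_1,u_2}$ fails to percolate for all $u_2\ge u_2^0$, so $\Gamma(u_1)\le u_2^0<\infty$, whence only $\Gamma(u^+)$ can be $+\infty$. \emph{Monotonicity:} if $u^+\le u_1\le u_1'$ then $\{u_2:\mathcal{V}^{u_1',u_2}\text{ percolates}\}\subseteq\{u_2:\mathcal{V}^{u_1,u_2}\text{ percolates}\}$ by monotonicity, so $\Gamma(u_1')\le\Gamma(u_1)$ and $\Gamma$ is non-increasing. \emph{The two regimes:} for $u_1\ge u^+$ and $u_2<\Gamma(u_1)$, pick $u_2'\in(u_2,\Gamma(u_1))$ with $\mathcal{V}^{u_1,u_2'}$ percolating; since $u_2\le u_2'$, monotonicity gives that $\mathcal{V}^{u_1,u_2}$ percolates, with a unique infinite component by Proposition \ref{unique_V}, which is case (ii). For $u_1\ge u^+$ and $u_2>\Gamma(u_1)$, by the definition of the supremum $\mathcal{V}^{u_1,u_2}$ does not percolate. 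This would complete the proof.

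The argument is essentially soft and I do not anticipate a substantial obstacle; the two points requiring care are (a) producing a single coupling on which each family $(\mathcal{I}^u_j)_{u>0}$ is monotone in $u$, so that the comparisons are almost sure rather than merely in distribution, and (b) the zero-one law for $\{\mathcal{V}\text{ percolates}\}$, i.e.\ the ergodicity of the product of the two interlacements under translations, which is what makes $\Gamma$ an honest threshold (it rules out an intermediate regime in which $\mathcal{V}$ percolates with probability strictly between $0$ and $1$). Everything else is bookkeeping with up-sets and the symmetry $u_1\leftrightarrow u_2$.
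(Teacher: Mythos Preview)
Your proposal is correct and follows essentially the same route as the paper: define the threshold $\Gamma$ via monotonicity in $u_2$, set $u^+$ as the infimum of the first coordinate of the non-percolating region, bound $u^+\in[u_*,u_{**}]$ via Propositions \ref{existence_V} and \ref{nonpercolate_V}, and use the $u_1\leftrightarrow u_2$ symmetry to obtain $\Gamma\ge u^+$. The only cosmetic difference is that the paper takes $\Gamma(x)=\inf\{u:\mathbbm{P}[0\overset{\mathcal{V}^{x,u}}{\longleftrightarrow}\infty]=0\}$ whereas you take the supremum of the percolating $u_2$'s; under the $0$--$1$ law these coincide, and your write-up is simply more explicit about the monotone coupling and ergodicity that the paper leaves implicit.
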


The proof of Proposition \ref{unique_V} is an adaptation of the Burton-Keane argument (see Theorem 2 of \cite{BK}, Corollary 2.3 of \cite{unique K} and Theorem 1.1 of \cite{unique V}). The proof presented here is a streamlined version of that in Theorem 1.1 of \cite{unique V}. Also, the maps $\phi$ and $\Phi$ defined below are adapted from (2.23) to (2.24) in \cite{unique K} and (3.2), (3.4), (3.11) in \cite{unique V}. The idea is to change the situation in a finite box by local surgeries.

\begin{proof}[Proof of Proposition \ref{unique_V}]
Recall that random interlacements are translation-invariant and ergodic as shown in Theorem 2.1 of \cite{unique K}. It follows that the total number $N$ of infinite connected components of $\mathcal{V}$ is a.s.\ a constant, possibly infinite. The proof contains two parts. The first step is to argue that
\begin{equation*}
    \mbox{for }1 < k < \infty, \mbox{ }\mathbbm{P}[N=k]=0.
\end{equation*}
Suppose that in contrast for some $k \in (1, \infty)$, there are $k$ infinite clusters a.s. Then, there exists a large constant $M$ such that $\mathbbm{P}[A]>0$, where $A$ denotes the event that all the $k$ infinite connected components of $\mathcal{V}$ intersect the box $K=[-M,M]^d$.

We first prove that $\mathbbm{P}[A_1]=0$, where $A_1$ denotes the event that $\mathcal{V} \backslash K$ contains more than $k$ infinite connected components. We consider the following map $\phi_1:W^* \rightarrow W^*$. Recall that $W^*_K$ is the set of the paths in $W^*$ that intersect $K$. For a path $\omega$ in $W^*_K$ , whenever $\omega$ enters $K$, the map $\phi_1$ adds to $\omega$ a subpath that covers $K$ and leaves $K$ at that point. For the paths in $(W^*_K)^c$, the map $\phi_1$ is an identity map. $\phi_1$ can induce a natural map $\Phi_1 : \Omega \rightarrow \Omega$, where $\Omega$ is the configuration space of two independent interlacements defined in Section \ref{section_2}. The map $\Phi_1$ is defined as:
\begin{equation*}
    \Phi_1(\psi)=\sum_{i,j} \delta_{(\phi (\omega_i), \phi (\omega'_j))}\mbox{, for }\psi=\sum_{i,j} \delta_{(\omega_i,\omega'_j)} \in \Omega.
\end{equation*}
Note that $\Phi_1(A_1) \subset \{ N>k \}$ (because the situation outside $K$ is unchanged and $K$ is occupied) and $\Phi_1 \circ \mathbbm{P} $ is absolutely continuous with respect to $\mathbbm{P}$. Thus,
\begin{align*}
    \mathbbm{P}[A_1] &\leq \mathbbm{P}[\Phi_1^{-1}\{ N>k \}] \\
    &= \Phi_1 \circ \mathbbm{P}[N>k]=0.
\end{align*}

Therefore $\mathbbm{P}[A \backslash A_1]=\mathbbm{P}[A]>0$. If the event $A$ happens, then we can find two vertices $z_1$ and $z_2$ in $\partial_i K$ such that they are vacant and contained in two distinct infinite components of $\mathcal{V}$. Since the number of choices is finite, there exist $z_1$ and $z_2$ in $\partial_i K$ such that $\mathbbm{P}[B]>0$, where $B$ represents the event that $\{ A \backslash A_1,\mbox{vertices }z_1 \mbox{ and }z_2\mbox{ satisfy the above conditions} \}$. Choose a set $U \subset K$ containing a simple path joining $z_1$ and $z_2$. We also demand that $U \cap \partial_i K=\{z_1,z_2 \}$. Consider the following map $\phi_2:W^* \rightarrow W^*$. For a path $\omega$ in $W^*_K$, whenever $\omega$ enters $K$, the map $\phi_2$ replaces the subpath of $\omega$ until it leaves $K$ by a subpath that bypasses $U$ and comes out of $K$ at the same point as $\omega$ (If $\omega$ enters or leaves $K$ at $z_1$, then the subpath need not bypass $z_1$, the same for $z_2$). For the paths in $(W^*_K)^c$, the map $\phi_2$ is an identity map. Then we can define $\Phi_2$ induced by $\phi_2$ as before. Note that any path will not pass $U \backslash \{z_1,z_2 \}$ under $\phi_2$ and all the paths passing through $z_1$ or $z_2$ must pass $z_1$ or $z_2$ in the preimage of $\phi_2$. In addition, the situation outside $K$ remains unchanged. Thus, $\Phi_2 (B) \subset \{ N<k \}$, since $\mathcal{V} \backslash K$ contains at most $k$ infinite components and two of them are connected under the map $\Phi_2$. So, 
\begin{align*}
    \mathbbm{P}[B] &\leq \mathbbm{P}\left[\Phi_2^{-1}\{ N<k \}\right] \\
    &= \Phi_2 \circ \mathbbm{P}[N<k]=0.
\end{align*}
We get a contradiction. Hence, for $1 < k < \infty$, $\mathbbm{P}[N=k]=0$.

The second step is to reject that $\mathbbm{P}[N=\infty]>0$. We assume that on the contrary this happens. Then there exists $M$ such that $\mathbbm{P}[C]>0$, where $C$ denotes the event that at least $100^d$ distinct infinite components in $\mathcal{V}$ intersect $K=[-M,M]^d$. We can find three vacant vertices $y_1$,$y_2$ and $y_3$ in $\partial_i K$ such that they are at least distance 10 from each other and all the corners of the box, and belong to three distinct infinite components. Since there are finitely many choices, $\mathbbm{P}[D]>0$ for some $y_1$,$y_2$ and $y_3$ in $\partial_i K$, where $D$ represents the event that $C$ happens and $y_1,y_2,y_3$ satisfy the above conditions. We can find a subset $U \subset K$ such that (1). $U \backslash \{ 0 \}$ contains three disjoint simple paths from 0 to $y_1, y_2$ and $y_3$; (2). $U \cap \partial_i K= \{y_1,y_2, y_3 \}$.  We consider the following map $\phi_3:W^* \rightarrow W^*$. For a path $\omega$ in $W^*_K$, whenever $\omega$ enters $K$, the map $\phi_3$ replaces the subpath of $\omega$ until it leaves $K$ with a subpath that bypasses $U$, fills $K \backslash U$ and comes out of $K$ at the same point as $\omega$ (If $\omega$ enters or leaves $K$ at $y_1$, then the subpath need not bypass $y_1$, the same for $y_2$ and $y_3$). For the paths in $(W^*_K)^c$, the map $\phi_3$ is an identity map. We can define $\Phi_3$ induced by $\phi_3$ as before. Note that any path will not pass $U \backslash \{y_1,y_2,y_3 \}$ under $\phi_3$ and all the paths passing through $y_1, y_2$ or $y_3$ must pass it in the preimage. Besides, $K \backslash U$ is occupied and the situation outside $K$ has not been changed. Thus, in $\Phi_3(D)$, the vertex 0 is a trifurcation point meaning that 0 belongs to an infinite component of $\mathcal{V}$ which is split into three distinct components by deleting 0. By definition, $ \Phi_3 \circ \mathbbm{P} $ is absolutely continuous with respect to $ \mathbbm{P}$. This together with $0<\mathbbm{P}[D] \leq \Phi_3 \circ \mathbbm{P}[ \Phi_3(D)]$ implies that $\mathbbm{P}[\Phi_3(D) ]>0$. Thus, $\mathbbm{P}[0 \mbox{ is a trifurcation point}] \geq \mathbbm{P}[\Phi_3(D) ]>0$. This is impossible due to the Burton-Keane argument as shown in Theorem 2 of \cite{BK}.

In conclusion, either $N=0$ a.s.\ or $N=1$ a.s.
\end{proof}

\begin{proof}[Proof of Proposition \ref{existence_V}]
This is obvious since $\mathcal{V}=\mathcal{V}^{u_1}_1 \cup \mathcal{V}^{u_2}_2$ and $\mathcal{V}^u$ a.s.\ has an infinite cluster when $u<u_*$ by \eqref{def_u*}.
\end{proof}

\begin{proof}[Proof of Proposition \ref{nonpercolate_V}]
Take $h= (u_1+u_{**})/2>u_{**}$. We define our seed event $G_x$ as the event that $B(x,L_0)$ is connected with $\partial_i B(x,2L_0)$ in $\mathcal{V}$, where $L_0$ is an integer to be determined later. Then, $G_x$ is measurable with respect to the configuration in $B(x,2L_0)$, shift-invariant and decreasing. $G_x$ is contained in the union of the following two events: (1). in $\mathcal{V}^{u_1}_1$, the box $B(x,L_0)$ is connected to $\partial_i B(x,2L_0)$; (2). $\mathcal{V}^{u_2}_2 \cap B(x,2L_0) \neq \emptyset$. Since $h>u_{**}$, by Theorem 3.1 of \cite{soft local times} there exist constants $c,C>0$ depending on $h$ such that 
\begin{equation*}
    \mathbbm{P}\left[B(x,L_0) \overset{\mathcal{V}^h_1}{\longleftrightarrow} \partial_i B(x,2L_0)\right] \leq {C}e^{-{L_0}^{c}}.
\end{equation*}
Thus, by the above inequality and \eqref{def_RI_2},
\begin{align*}
\mathbbm{P}\left[\mathcal{K}^{h,u_2} \in G_x\right] &\leq \mathbbm{P}\left[B(x,L_0) \overset{\mathcal{V}^h_1}{\longleftrightarrow} \partial_i B(x,2L_0)\right] +\mathbbm{P}\left[\mathcal{V}^{u_2}_2 \cap B(x,2L_0) \neq \emptyset\right] \\
&\leq {C}e^{-{L_0}^{c}} + \sum_{y \in B(x,2L_0)} \mathbbm{P}\left[y \in \mathcal{V}^{u_2}_2\right] \\
&= {C}e^{-{L_0}^{c}} + (4L_0+1)^de^{-u_2 \cdot {\rm cap}(0)}.
\end{align*}
Next, we will use the decoupling inequalities, i.e., Proposition \ref{decoupling}. In Proposition \ref{decoupling}, take $\epsilon>0$ such that $u_1=(1+\epsilon)h$. In \eqref{decoup_decrease}, we take $u_1=h, u_1^+=u_1, u_2=g, u_2^+=(1+\epsilon)g$, where $g$ is a constant to be determined later. Therefore, for $l_0 \geq A$ ($A=A(d,\epsilon)$ is the integer in Proposition \ref{decoupling}),
\begin{equation}
\begin{split}
\label{3.11}
&\mathbbm{P}\left[\mathcal{K}^{u_1,(1+\epsilon)g} \in G_{0,n}\right]\\
\leq &(2l_0+1)^{d \cdot 2^{n+1}}\left[ \mathbbm{P}\left[\mathcal{K}^{h,g} \in G_0\right] + \epsilon (h, L_0, l_0) + \epsilon (g, L_0, l_0) \right]^{2^n} \\
\leq &(2l_0+1)^{d \cdot 2^{n+1}}\left[ Ce^{-{L_0}^{c}} + {(4L_0+1)}^d \cdot e^{-g \cdot {\rm cap} (0)}+\epsilon (h, L_0, l_0)+ \epsilon ({g}, L_0, l_0) \right]^{2^n}.   
\end{split}
\end{equation}
Recall that by \eqref{def_epsilon},
\begin{equation*}
    \epsilon (u, L_0, l_0)= \frac{2e^{-uL_0^{d-2}l_0^{\frac{d-2}{2}}}}{1-e^{-uL_0^{d-2}l_0^{\frac{d-2}{2}}}}.
\end{equation*}
Take $l_0=A$. There exists an integer $B$ such that for $L_0=g=B$ the right-hand side of \eqref{3.11} is smaller than $2^{-2^n}$, i.e.,
\begin{equation*}
    (2A+1)^{2d}\left(Ce^{-{B}^{c}} + {(4B+1)}^d \cdot e^{-B \cdot \rm{cap}(0)}+ \epsilon (h, B, A)+ \epsilon (B, B, A)\right) < \frac{1}{2}.
\end{equation*}

We claim that if 0 is connected to $\partial_i B(0,2L_n)$ in $\mathcal{V}$, then the event $G_{0,n}$ happens (see \eqref{seed} for the definition of $G_{0,n}$). We can prove this claim by induction. For $n=0$, this holds immediately. If for $n=k$ it holds, we consider the case $n=k+1$. For a simple path connecting $0$ to $\partial_i B(0,2L_{k+1})$, it must pass $\partial_i B(0,L_{k+1}/3)$ and $\partial_i B(0,2L_{k+1}/3)$. By the induction hypothesis, we can prove that the $L_k$ boxes first passed by the path in $\partial_i B(0,L_{k+1}/3)$ and $\partial_i  B(0,2L_{k+1}/3)$ satisfying $G_{x,k}$ and their distance is by definition larger than $L_{k+1}/100$. Hence, the claim holds for $n=k+1$. By induction, it holds for all $n$. Recall that $\mathcal{V}$ is decreasing in $u_2$. Thus, for $L_0=B$ and $q \geq (1+\epsilon)B$,
\begin{align*}
    \mathbbm{P}\left[0 \overset{\mathcal{V}^{u_1,q}}{\longleftrightarrow}\partial_i B(0,2L_n) \right]& \leq \mathbbm{P}\left[0 \overset{\mathcal{V}^{u_1,(1+\epsilon)B}}{\longleftrightarrow}\partial_i B(0,2L_n) \right] \\
    &\leq \mathbbm{P}\left[\mathcal{K}^{u_1,(1+\epsilon)B} \in G_{0,n}\right]\leq 2^{-2^n}.
\end{align*} 
As $n$ tends to $\infty$, the right-hand side tends to zero. Thus,
\begin{equation*}
    \mathbbm{P}\left[0 \overset{\mathcal{V}^{u_1,q}}{\longleftrightarrow}\infty \right]=0 \mbox{, for all }q \geq (1+\epsilon)B.
\end{equation*}
Take $C(u_1)=(1+\epsilon)B$ in Proposition \ref{nonpercolate_V} and the proof is completed.
\end{proof}
\begin{remark}
\label{exponential decay}
Although the connectivity function we obtain here has stretched exponential decay, by the method in Section 7 of \cite{soft local times} we can greatly improve the bound to exponential decay in $d \geq 4$ and exponential decay with a logarithmic correction in $d=3$. In other words, for $d \geq 4$, $u_1>u_{**}$ and $u_2>C(u_1)$, there exist two constants $c=c(u_1,u_2,d)$ and $C=C(u_1,u_2,d)$ such that
\begin{equation*}
    \mathbbm{P}\left[0 \overset{\mathcal{V}}{\longleftrightarrow} x\right] \leq C e^{-c |x|_1}.
\end{equation*}
For $d=3$, $u_1>u_{**}$, $u_2>C(u_1)$ and any $\epsilon>0$, there exist two constants $c=c(u_1,u_2,d,\epsilon)$ and $C=C(u_1,u_2,d,\epsilon)$ such that 
\begin{equation*}
    \mathbbm{P}\left[0 \overset{\mathcal{V}}{\longleftrightarrow} x\right] \leq C e^{-c \frac{|x|_1}{\log^{3+\epsilon}|x|_1}}.
\end{equation*}
The proof is similar to Section 7 in \cite{soft local times} despite that we have to change both $u_1$ and $u_2$ and produce two error terms in the sprinkling process. For the sake of brevity, we will not give a complete proof here.
\end{remark}
\begin{proof}[Proof of Proposition \ref{phase curve}]
Take $\Gamma(x)= \inf \{ u: \mathbbm{P}\left[0 \overset{\mathcal{V}^{x,u}}{\longleftrightarrow} \infty \right] =0 \}$. By monotonicity, $\Gamma(x)$ is a decreasing function. Let $u^+= \inf \{ x: \Gamma(x) <\infty \}$. By Propositions \ref{existence_V} and \ref{nonpercolate_V}, we have $u_* \leq u^+ \leq u_{**}$. It follows from the symmetry in $u_1$ and $u_2$ that $\Gamma(x) \geq u^+$ when $x \geq u^+$. This completes the proof.
\end{proof}
\begin{remark}
Here, $\Gamma$ is the phase curve of $\mathcal{V}$. We conjecture that it is continuous and strictly decreasing. A much more difficult problem is what happens to $\mathcal{V}$ on this curve, since we do not even know whether $u_*=u_{**}$ and what happens to $\mathcal{V}^{u_*}$.
\end{remark}

\section{Percolative properties of $\mathcal{K}$}
\label{section_4}
In this section, we study the percolative properties of $\mathcal{K}$ and prove Theorem \ref{1.2}. The original model, random interlacements, is almost surely connected and contains a unique infinite component as shown in Corollary 2.3 of \cite{unique K}. However, the intersection of two independent random interlacements is not connected (this can be easily proved) and may even have no infinite components. Theorem \ref{1.2} is split into four parts, namely the four propositions below.
\begin{proposition}
\label{unique_K}
There a.s.\ exists at most one infinite cluster in $\mathcal{K}$.
\end{proposition}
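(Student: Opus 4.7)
The plan is to mirror the Burton-Keane-style argument used in the proof of Proposition~\ref{unique_V}, adapting the local surgeries to control the intersection $\mathcal{K}=\mathcal{I}^{u_1}_1\cap\mathcal{I}^{u_2}_2$ rather than the union $\mathcal{V}$. Since each interlacement is translation-invariant and ergodic, so is the product law $\mathbbm{P}=P^{u_1}\otimes P^{u_2}$, and hence the number $N$ of infinite components of $\mathcal{K}$ is a.s.\ a deterministic element of $\{0,1,2,\ldots\}\cup\{\infty\}$. Proving the proposition reduces to excluding the cases $1<N<\infty$ and $N=\infty$.

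Suppose first that $\mathbbm{P}[N=k]=1$ for some $1<k<\infty$. Choose $M$ so that $\mathbbm{P}[A]>0$, where $A$ is the event that every one of the $k$ infinite components of $\mathcal{K}$ meets $K=[-M,M]^d$. By finiteness of choices, one can fix on a positive-probability subevent two vertices $z_1,z_2\in\partial_iK\cap\mathcal{K}$ in distinct infinite components of $\mathcal{K}$, and let $U\subset K$ be a simple nearest-neighbor path joining $z_1$ to $z_2$ with $U\cap\partial_iK=\{z_1,z_2\}$. Define $\phi:W^*\to W^*$ trajectory-wise: for each trajectory in $W^*_K$, at its first entry into $K$ insert a finite excursion that covers $U$ and returns to the entry point before continuing with the original path; leave all trajectories in $(W^*_K)^c$ unchanged. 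Let $\Phi:\Omega\to\Omega$ be induced by $\phi$ applied in both independent interlacements. Because $\mathcal{K}\cap K\neq\emptyset$ on $A$, both interlacements contain at least one trajectory in $W^*_K$, so $U\subset\mathcal{I}_{1,\mathrm{new}}\cap\mathcal{I}_{2,\mathrm{new}}=\mathcal{K}_{\mathrm{new}}$ after the surgery, while both interlacements are left unchanged outside $K$. Since $\Phi$ only adds vertices to each $\mathcal{I}_i$ (and hence to $\mathcal{K}$), the two infinite components containing $z_1$ and $z_2$ are merged via $U$, so $N_{\mathrm{new}}\leq k-1$. Combined with $\Phi\circ\mathbbm{P}\ll\mathbbm{P}$, argued exactly as in Proposition~\ref{unique_V}, this yields $\mathbbm{P}[A]=0$, a contradiction.

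For $N=\infty$, use the trifurcation step: choose $M$ so that with positive probability at least $100^d$ infinite components of $\mathcal{K}$ meet $K$, select $y_1,y_2,y_3\in\partial_iK\cap\mathcal{K}$ in three distinct infinite components at pairwise distance $\geq 10$ and away from the corners, and let $U\subset K$ consist of three disjoint simple paths from $0$ to $y_1,y_2,y_3$ with $U\cap\partial_iK=\{y_1,y_2,y_3\}$. The analogous insertion surgery gives $U\subset\mathcal{K}_{\mathrm{new}}$, so $0$ becomes a trifurcation point of $\mathcal{K}_{\mathrm{new}}$; translation invariance and the classical Burton-Keane bound \cite{BK} then produce the contradiction.

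The principal technical obstacle is the measure-theoretic step, namely verifying $\Phi\circ\mathbbm{P}\ll\mathbbm{P}$ when the modifications are carried out in two independent interlacement processes simultaneously. This transfers from \cite{unique V} and Proposition~\ref{unique_V} almost verbatim, since the insertion acts independently in each of the two interlacements and is supported on $W^*_K$. A minor bookkeeping point in the trifurcation step is to arrange the excursions so that the vertices added to $\mathcal{K}_{\mathrm{new}}\cap K$ (from entry/exit points on $\partial_iK$ and excursion detours common to both interlacements) do not link two different $y_i$'s while avoiding $0$; this is automatic because $y_1,y_2,y_3$ already lie in distinct infinite components of $\mathcal{K}_{\mathrm{old}}$, and the excursion detours in the two interlacements can be chosen interlacement-specific so as not to create new intersections outside $U$.
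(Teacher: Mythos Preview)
Your Burton--Keane strategy is correct and matches the paper's, but the paper exploits a simplification you miss: since $\mathcal{K}$ is \emph{increasing} in both interlacements, the surgery can simply insert an excursion that fills the entire box $K$ in each interlacement (the paper's map $\phi_4$). This yields $K\subset\mathcal{K}_{\mathrm{new}}$ outright, which in the first step merges all $k$ components touching $K$ into one, and in the second step makes $0$ an $M$-trifurcation point (removing $B(0,M)=K$ leaves $\mathcal{K}_{\mathrm{new}}\setminus K=\mathcal{K}_{\mathrm{old}}\setminus K$, in which the three original infinite components are still separated). No path $U$, no detours, no bookkeeping. Your first step is also correct, just unnecessarily elaborate.

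Your trifurcation step, however, has a genuine gap. You assert that the excursion detours can be chosen ``interlacement-specific so as not to create new intersections outside $U$'', but this only controls the overlap of the two \emph{added} excursion families with each other; it does not control their overlap with the pre-existing random sets $\mathcal{I}_{1,\mathrm{old}}$ and $\mathcal{I}_{2,\mathrm{old}}$. Concretely, the route in $\mathcal{I}_1$ from an entry point $w\in\partial_iK$ to $U$ may pass through vertices of $\mathcal{I}_{2,\mathrm{old}}$, producing new $\mathcal{K}$-vertices outside $U$ that you cannot prescribe deterministically. Even if one could arrange $\mathcal{K}_{\mathrm{new}}\setminus\mathcal{K}_{\mathrm{old}}=U$ exactly, a finite component of $\mathcal{K}_{\mathrm{old}}$ sitting inside $K$ and adjacent to two different arms $\pi_i,\pi_j$ of $U$ would link $y_i$ to $y_j$ in $\mathcal{K}_{\mathrm{new}}\setminus\{0\}$, destroying the trifurcation; the sentence ``this is automatic because $y_1,y_2,y_3$ already lie in distinct infinite components of $\mathcal{K}_{\mathrm{old}}$'' does not exclude this. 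The clean fix is exactly the paper's: fill all of $K$ and use the box itself as the trifurcation object.
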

\begin{proposition}
\label{percolate_K}
Given $u_1>0$, there exists a constant $C=C(u_1)$ such that $\mathcal{K}$ a.s.\ has a unique infinite component for $u_2>C$.
\end{proposition}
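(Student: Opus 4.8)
The plan is to combine the uniqueness already granted by Proposition \ref{unique_K} with an \emph{existence} statement for the infinite component of $\mathcal{K}$, obtained by running the renormalization scheme of Section \ref{section_2} with a suitable increasing seed event: the same machinery as in the proof of Proposition \ref{nonpercolate_V}, but now used to \emph{produce} a crossing rather than to forbid one.

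Two local inputs enter. First, random interlacements are locally strongly connected: for fixed $u_1>0$, with probability tending to $1$ as $L_0\to\infty$, every vertex of $\mathcal{I}^{u_1}_1\cap B(x,2L_0)$ lies in a single nearest-neighbor cluster of $\mathcal{I}^{u_1}_1$ contained in a fixed dilate of $B(x,2L_0)$, and this cluster meets every sub-box of side length $L_0/100$ of $B(x,2L_0)$ (the chance of missing a given such sub-box $Q$ is $e^{-u_1\,\mathrm{cap}(Q)}\le e^{-cL_0^{d-2}}$ by \eqref{def_RI_2}, and there are only finitely many of them). Second, again by \eqref{def_RI_2},
\[
\mathbbm{P}\left[\mathcal{V}^{u_2}_2\cap B(x,2L_0)\neq\emptyset\right]\le\sum_{y\in B(x,2L_0)}\mathbbm{P}\left[y\in\mathcal{V}^{u_2}_2\right]=(4L_0+1)^d e^{-u_2\,\mathrm{cap}(0)},
\]
which for fixed $L_0$ is as small as we wish once $u_2$ is large. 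On the intersection of these two high-probability events one has $\mathcal{K}\cap B(x,2L_0)=\mathcal{I}^{u_1}_1\cap B(x,2L_0)$, which in particular contains a ``ubiquitous'' cluster. I would therefore take the increasing seed event $G_x$ to be the event that $\mathcal{K}\cap B(x,2L_0)$ contains a cluster meeting every sub-box of side length $L_0/100$ of $B(x,2L_0)$; the two estimates above give $\mathbbm{P}[\mathcal{K}^{u_1',u_2'}\in G_0]\to 1$ as $L_0\to\infty$, uniformly over $u_1'\ge u_1/2$ and over large $u_2'$.

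I would then feed the decreasing seed event $G_x^c$ into the decoupling inequality \eqref{decoup_decrease} with base parameters $\bigl(u_1/(1+\epsilon),\,u_2/(1+\epsilon)\bigr)$ (admissible because $u_1/(1+\epsilon)>0$ and $u_2/(1+\epsilon)$ can still be taken large). Choosing first $\epsilon$, then $l_0=A(d,\epsilon)$, then $L_0$ large, and finally $u_2=C(u_1)$ large, so that $\mathbbm{P}[\mathcal{K}^{u_1/(1+\epsilon),\,u_2/(1+\epsilon)}\in G_0^c]$ together with the two error terms $\epsilon(u_1/(1+\epsilon),L_0,l_0)$ and $\epsilon(u_2/(1+\epsilon),L_0,l_0)$ is smaller than $(2l_0+1)^{-2d}$, the right-hand side of \eqref{decoup_decrease} decays to $0$ as $n\to\infty$. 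By the claim recalled after \eqref{seed}, a $*$-path of $G^c$-boxes joining $0$ to $\partial_i B(0,L_n)$ forces $(G^c)_{0,n}$, so almost surely the $*$-cluster of $G^c$-boxes through any fixed box is finite; a standard topological fact about $\mathbb{Z}^d$ then produces an infinite nearest-neighbor cluster $\mathcal{G}_\infty$ of $G$-boxes. Along $\mathcal{G}_\infty$ one glues the corresponding ubiquitous clusters of $\mathcal{K}$ into a single infinite component: two neighboring renormalized boxes overlap in a region containing many sub-boxes which both ubiquitous clusters must reach, and --- using that on the underlying high-probability events $\mathcal{K}$ agrees locally with $\mathcal{I}^{u_1}_1$, whose infinite component is unique by \cite{unique K} --- these two clusters lie in the same component of $\mathcal{K}$. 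This yields an infinite component of $\mathcal{K}$ a.s., unique by Proposition \ref{unique_K}.

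The step I expect to be the main obstacle is exactly the non-monotonicity flagged in the introduction. The property genuinely needed for the gluing --- that $\mathcal{K}\cap B(x,2L_0)$ be a \emph{single} ubiquitous cluster --- is not an increasing event, so it cannot be supplied to \eqref{decoup_decrease} directly. The device above is to split it into the increasing event $G_x$ (merely ``a ubiquitous cluster exists''), which carries the renormalization, and the genuine strong-connectivity statement, retained only as a high-probability input and combined with the uniqueness of the infinite component of $\mathcal{I}^{u_1}_1$ and the crude bound on $\mathcal{V}^{u_2}_2$ to drive the gluing. Checking cleanly that neighboring $G$-boxes along $\mathcal{G}_\infty$ really do glue --- rather than merely each containing some large $\mathcal{K}$-cluster --- is where the care lies; the rest (choice of constants, bookkeeping the two error terms in \eqref{decoup_decrease}, and the topology of $\mathbb{Z}^d$) is routine.
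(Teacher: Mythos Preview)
Your overall architecture is right, and you correctly locate the difficulty in the gluing step. But the fix you sketch for it does not work, and this is a genuine gap.

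Concretely: your seed event $G_x$ only asserts the \emph{existence} of a $\mathcal{K}$-cluster meeting every $L_0/100$-sub-box of $B(x,2L_0)$. Two neighboring $G$-boxes may each contain such a cluster, and both clusters may visit the same sub-boxes in the overlap, yet be disjoint (a sub-box has many vertices). Your proposed remedy invokes two auxiliary facts: (i) the high-probability events ``$\mathcal{V}^{u_2}_2\cap B(x,2L_0)=\emptyset$ and $\mathcal{I}^{u_1}_1$ is locally strongly connected'', and (ii) the global uniqueness of the infinite cluster of $\mathcal{I}^{u_1}_1$. Neither helps. For (i): these events are \emph{not} part of $G_x$, and the renormalization/duality argument only tells you that the infinite path $\mathcal{G}_\infty$ consists of $G$-boxes --- you have no control over whether the auxiliary events hold along $\mathcal{G}_\infty$. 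Correlations between boxes are precisely what the decoupling machinery is there to handle, so you cannot appeal to ``high probability at each box'' separately without running the scheme on these events as well. For (ii): knowing that two local $\mathcal{K}$-clusters lie in the same component of $\mathcal{I}^{u_1}_1$ (which is all of $\mathcal{I}^{u_1}_1$, since it is connected) says nothing about whether they lie in the same component of $\mathcal{K}$; the connecting $\mathcal{I}^{u_1}_1$-path may pass through $\mathcal{V}^{u_2}_2$ far from either box.

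The paper resolves this by doing what you hint at but stop short of: it splits the non-monotone ``good box'' condition into \emph{three} monotone seed events and runs the decoupling scheme on each separately. These are $E_x$ (each $L_0$-sub-cube of $x+[0,2L_0)^d$ carries an $\tilde{\mathcal{I}}^{u_1}_1$-component of size $\ge \tfrac34 m(u_1)L_0^d$, and all of them are connected in the $2L_0$-cube; increasing), $F_x$ (each $L_0$-sub-cube has at most $\tfrac54 m(u_1)L_0^d$ occupied vertices; decreasing), and $G_x$ ($\mathcal{V}^{u_2}_2\cap(x+[0,2L_0)^d)=\emptyset$; increasing). The pair $E_x\cap F_x$ forces, by a counting argument, \emph{uniqueness} of the large component in each sub-cube, and $G_x$ then makes this $\mathcal{I}^{u_1}_1$-component a $\mathcal{K}$-component; together these render the gluing of neighboring good boxes \emph{deterministic}. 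The bound on a $*$-path of bad boxes becomes a union of three terms (one per seed event), combined via Lemma~5.2 of \cite{quenched noise}. The missing idea in your proposal is precisely this extra \emph{decreasing} seed event $F_x$ (or some substitute) pinning down uniqueness of the local macroscopic component; without it, the renormalization only produces an infinite family of large local $\mathcal{K}$-clusters with no mechanism to link them.
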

\begin{proposition}
\label{nonpercolate_K}
There exists a constant $c=c(d)$ such that for all $u_1,u_2<c$, there are a.s.\ no infinite components in $\mathcal{K}$.
\end{proposition}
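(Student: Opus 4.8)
The plan is to run the renormalization argument of Proposition \ref{decoupling} with an \emph{increasing} seed event, after first establishing the right local estimate. Since $\mathcal{K} = \mathcal{I}_1 \cap \mathcal{I}_2$ is increasing in both $u_1$ and $u_2$, the natural choice is $G_x = \{B(x,L_0) \overset{\mathcal{K}}{\longleftrightarrow} \partial_i B(x,2L_0)\}$, which is increasing, shift-invariant and $B(x,2L_0)$-measurable. By the path-to-tree claim from Section \ref{section_2} (the same claim invoked in Proposition \ref{nonpercolate_V}), if $0 \overset{\mathcal{K}}{\longleftrightarrow} \partial_i B(0,2L_n)$ then $G_{0,n}$ occurs, so it suffices to make $\mathbbm{P}[\mathcal{K}^{u_1,u_2} \in G_0]$ plus the two error terms $\epsilon(u_1^-,L_0,l_0)$, $\epsilon(u_2^-,L_0,l_0)$ small enough (below the threshold making the bracket in \eqref{decoup_increase} less than $\tfrac12$, say, after fixing $l_0 = A(d,\epsilon)$), uniformly as $u_1, u_2 \to 0$.

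The core of the argument is the local estimate: for a suitable choice of the parameters we need
\[
\mathbbm{P}\left[B(0,L_0) \overset{\mathcal{I}_1^{u_1} \cap \mathcal{I}_2^{u_2}}{\longleftrightarrow} \partial_i B(0,2L_0)\right] \longrightarrow 0
\]
in an appropriate regime. Here is where the scaling $M = u_1 L_0^{d-2} = u_2 L_0^{d-2}$ enters. First fix $M$ large: conditionally on the number of trajectories of $\mathcal{I}_i$ that enter $B(0,2L_0)$, this number is $\mathrm{Poisson}(u_i\,\mathrm{cap}(B(0,2L_0))) = \mathrm{Poisson}(\Theta(M))$, so with probability close to $1$ there are $O(M)$ such excursions in each of $\mathcal{I}_1$ and $\mathcal{I}_2$ (the deficit probability is controlled by a Poisson tail in $M$, independent of $L_0$). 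On this event, $\mathcal{I}_i \cap B(0,2L_0)$ is contained in the union of $O(M)$ simple-random-walk traces. For $\mathcal{K}$ to connect $B(0,L_0)$ to $\partial_i B(0,2L_0)$, some trace from the $\mathcal{I}_1$-family must intersect some trace from the $\mathcal{I}_2$-family near a point at scale $L_0$ that also lies on a crossing path; by a union bound over the $O(M^2)$ pairs, this is bounded by $O(M^2)$ times the probability that two independent walks started in $B(0,2L_0)$ intersect on (a neighborhood of) $\partial_i B(0,L_0)$. For $d \geq 4$ that pairwise intersection probability decays polynomially in $L_0$ (two independent SRW traces in $\mathbb{Z}^d$, $d\ge 4$, meet near a sphere of radius $L_0$ with probability $O(L_0^{-(d-4)}\cdot L_0^{-?})$ — in any case $o(1)$ in $L_0$ for $d\ge 5$, and for $d=4$ with a logarithmic loss it still decays); for $d=3$ one uses precisely Proposition \ref{key_estimate}, which gives $O(1/\log L_0)$ per pair. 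In every case: fix $M$ first, then take $L_0$ large enough that $M^2$ times the pairwise bound is small; having fixed $M$ and $L_0$, the constraint $u_i = M/L_0^{d-2}$ forces $u_1, u_2$ to be the small constant $c = c(d)$ we are after. One must also check the error terms $\epsilon(u_i^-, L_0, l_0) = \epsilon(\Theta(M/l_0^{?}) \cdots)$ — but by \eqref{def_epsilon} these are of order $e^{-u_i L_0^{d-2} l_0^{(d-2)/2}} = e^{-\Theta(M\, l_0^{(d-2)/2})}$, which is small precisely because $M$ was taken large, so the ordering "$M$ first, then $L_0$" is consistent.

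The main obstacle is the local estimate, specifically making rigorous the claim that a $\mathcal{K}$-crossing of the annulus $B(0,2L_0)\setminus B(0,L_0)$ forces two of the $O(M)$ excursions (one from each interlacement) to intersect \emph{on or near} $\partial_i B(0,L_0)$, and then bounding that pairwise intersection probability uniformly over the (random, equilibrium-distributed) starting points — this is exactly the content requiring "concrete calculations on simple random walks" alluded to after Theorem \ref{1.2}, and in $d=3$ it is the reason Proposition \ref{key_estimate} (with its delicate $1/\log M$ order) was proved. A secondary technical point is that the seed event here, though increasing, is being used for a \emph{non-percolation} statement, so unlike the proof of Proposition \ref{percolate_K} we do get to apply \eqref{decoup_increase} directly; one just has to be careful that the sprinkling goes the right way, i.e.\ we prove the bound at levels $u_i^- < u_i$ and then the conclusion "$\mathcal{K}$ does not percolate" holds for the original $u_1, u_2$ as small positive constants, absorbing the factor $(1-\epsilon)$ into the definition of $c(d)$. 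Once the local bound is in hand, summing $\mathbbm{P}[0 \overset{\mathcal{K}}{\longleftrightarrow} \partial_i B(0,2L_n)] \leq (2l_0+1)^{d\cdot 2^{n+1}} 2^{-2^n} \to 0$ gives $\mathbbm{P}[0 \overset{\mathcal{K}}{\longleftrightarrow}\infty] = 0$, and translation invariance finishes the proof.
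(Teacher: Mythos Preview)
Your proposal is correct and follows essentially the same route as the paper: the same increasing seed event $G_x$, the same scaling $M = u_i L_0^{d-2}$, the same order of choosing parameters ($M$ large first so that the Poisson tail and the decoupling errors $\epsilon(u_i,L_0,l_0) \asymp e^{-cM l_0^{(d-2)/2}}$ are small, then $L_0$ large to kill the conditional crossing probability), and the same appeal to Proposition \ref{key_estimate} in $d=3$. One minor point: for $d\geq 4$ the paper does not argue via pairwise trajectory intersection but simply observes that a $\mathcal{K}$-crossing forces some vertex of $\partial_i B(0,L_0)$ to lie in both interlacements, and bounds this by $C L_0^{d-1}\cdot (CM/L_0^{d-2})^2 = C M^2 L_0^{3-d}$; this first-moment bound already decays for $d=4$, so your hedging there is unnecessary.
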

\begin{proposition}
\label{larger than 5}
Given $d \geq 5$ and $u_1>0$, there exists a constant $c=c(u_1,d)>0$ such that for all $u_2<c$, there are a.s.\ no infinite components in $\mathcal{K}$.
\end{proposition}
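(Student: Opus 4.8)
We outline a proof of Proposition~\ref{larger than 5}. The plan is to run the renormalization scheme once more with an increasing seed event, and to estimate its probability by conditioning on the second interlacement; the hypothesis $d\ge5$ enters through the transience of two walk ranges (i.e.\ $\sum_xG(0,x)^2<\infty$) and through Lawler's cut-time estimates \cite{cut time}. Fix $u_1>0$, set $\epsilon=\tfrac12$ and $l_0=A(d,\tfrac12)$ as in Proposition~\ref{decoupling}, and take the increasing, shift-invariant seed event $G_x=\{B(x,L_0)\overset{\mathcal{K}}{\longleftrightarrow}\partial_iB(x,2L_0)\}$, exactly as in the proof of Proposition~\ref{nonpercolate_V}. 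Given a target level $u_2$, choose $L_0=L_0(u_2)$ with $u_2L_0^{d-2}\in[M,2M]$, where $M=M(d)$ is a large constant fixed beforehand so that $\epsilon(u_2,L_0,l_0)$ of \eqref{def_epsilon} stays below $(2l_0+1)^{-2d}/6$ (possible since, on that range of $L_0$, this quantity is at most an expression in $M,l_0$ that tends to $0$ as $M\to\infty$); note also that $\epsilon(u_1,L_0,l_0)\to0$ as $u_2\to0$, since then $L_0\to\infty$. By the claim following \eqref{seed} (the same induction as in the proof of Proposition~\ref{nonpercolate_V}), $\{0\overset{\mathcal{K}}{\longleftrightarrow}\partial_iB(0,2L_n)\}\subseteq G_{0,n}$, so by \eqref{decoup_increase} it suffices to show
\[
  \mathbbm{P}\bigl[\mathcal{K}^{2u_1,2u_2}\in G_0\bigr]\longrightarrow0\qquad(u_2\to0):
\]
then for $u_2$ below some $c=c(u_1)$ the bracket in \eqref{decoup_increase} falls below $(2l_0+1)^{-2d}/2$, whence $\mathbbm{P}[0\overset{\mathcal{K}}{\longleftrightarrow}\partial_iB(0,2L_n)]\le2^{-2^n}\to0$.

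To bound the seed probability, condition on the trace $\omega_2$ of $\mathcal{I}^{2u_2}_2$ inside $B(0,2L_0)$: by \eqref{def_RI} it is the union of the ranges of $N\sim\mathrm{Poisson}\bigl(2u_2\,\mathrm{cap}(B(0,2L_0))\bigr)$ doubly-infinite walk trajectories $Y_1,\dots,Y_N$, with $\mathbb{E}N\asymp M$. The decisive $d\ge5$ estimate is the first-moment bound
\[
  \mathbb{E}\Bigl[\,\textstyle\sum_{i\ne j}\bigl|\mathrm{range}(Y_i)\cap\mathrm{range}(Y_j)\cap B(0,2L_0)\bigr|\,\Bigr]\ \asymp\ M^2\,L_0^{4-d}\ \longrightarrow\ 0 ,
\]
which is precisely the finiteness of $\sum_xG(0,x)^2$ for $d\ge5$ and fails for $d=3,4$. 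Hence on an event $\mathcal{G}_2$ of $\omega_2$-probability $\to1$ as $u_2\to0$ we may assume: distinct trajectory ranges are pairwise at $\ell^\infty$-distance $\ge2$ inside $B(0,2L_0)$; and, by \cite{cut time} (valid for $d\ge5$), every excursion of every $Y_i$ crossing the annulus $B(0,2L_0)\setminus B(0,L_0)$, being a walk segment of at least $L_0$ steps, carries at least $cL_0$ cut points with a spatial gap of scale $D$, of which at least $c'L_0$ are pairwise at $\ell^\infty$-distance $\ge D$, where $D=D(u_1)$ is fixed below. On $\mathcal{G}_2$ the connected components of $\mathcal{I}^{2u_2}_2\cap B(0,2L_0)$ are separated from one another, so a $\mathcal{K}$-crossing of the annulus stays inside one crossing excursion's range and must pass through each of its $\ge c'L_0$ separated cut points $z$ (deleting $z$ disconnects that range inside $B(0,2L_0)$, even up to nearest-neighbour adjacency); all such $z$ lie in $\mathcal{K}\subseteq\mathcal{I}^{2u_1}_1$.

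It remains to bound, given $\omega_2\in\mathcal{G}_2$, the $\omega_1$-probability that a $\omega_2$-measurable set $Z$ of $D$-separated vertices with $|Z|\ge c'L_0$ is entirely occupied by $\mathcal{I}^{2u_1}_1$. For $u_1$ fixed and $D=D(u_1)$ large, the occupation indicators over $D$-separated vertices of a single interlacement are $\rho$-close to independent with $\rho=\rho(u_1)<1$ — from \eqref{def_RI_2} together with a sprinkled single-interlacement decoupling — so $\mathbbm{P}[Z\subseteq\mathcal{I}^{2u_1}_1]\le\rho^{|Z|}\le e^{-c(u_1)L_0}$. Summing in expectation over the Poisson-many trajectories and their annulus-crossing excursions (whose expected total number is $\asymp M$) gives $\mathbbm{P}[\mathcal{K}^{2u_1,2u_2}\in G_0]\le\mathbbm{P}[\mathcal{G}_2^{\mathrm c}]+CM\,e^{-c(u_1)L_0}\to0$ as $u_2\to0$, since $L_0\to\infty$. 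This would complete the argument.

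The main obstacle, I expect, is the verification of $\mathcal{G}_2$: controlling, uniformly in the large constant $M$ and then in the limit $L_0\to\infty$, the event that the trajectories of $\mathcal{I}^{2u_2}_2$ inside $B(0,2L_0)$ have pairwise disjoint, non-adjacent ranges \emph{and} that each annulus-crossing excursion contributes linearly many robust, well-separated cut points. This is exactly where $d\ge5$ is indispensable, and where the first-moment computation must be carried out with care near $\partial B(0,2L_0)$, where the equilibrium entrance measure concentrates and the crude bound is weakest. A lesser but still genuine point is that the rate $c(u_1)$ in the quasi-independence estimate degenerates as $u_1\to\infty$ (when $\mathcal{I}^{2u_1}_1$ nearly fills $\mathbb{Z}^d$); this is harmless for fixed $u_1$, but it dictates the order of the choices: $D=D(u_1)$ first, then $M$ large, then $u_2\to0$.
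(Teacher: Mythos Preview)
Your approach follows the same strategy as the paper: condition on the sparse interlacement, use the non-intersection of walk ranges in $d\ge5$ to reduce to a single trajectory, and then invoke cut times to show that the range, intersected with the other interlacement, cannot cross the annulus. The paper's presentation differs only in packaging: it reduces explicitly to one simple random walk $X$ started on $\partial_iB(0,L_0)$, introduces ``bad times'' (cut times $n$ with $X(n)\notin\mathcal{I}^{u'}$), and shows that in every window of $\epsilon L_0^2$ steps there is a bad time, so the range is chopped into pieces of diameter $<L_0/2$. The paper's coverage bound is the weaker stretched-exponential $\mathbbm{P}[\{z_1,\dots,z_M\}\subset\mathcal{I}^{u'}]\le Ce^{-cM^{(d-2)/d}}$, applied to $M=L_0$ consecutive distinct points of the walk; this is already enough for the union bound over $O(L_0^2)$ windows.

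There is one genuine weak spot in your write-up. The inequality $\mathbbm{P}[Z\subset\mathcal{I}^{2u_1}_1]\le\rho^{|Z|}$ for $D$-separated $Z$ does not follow from \eqref{def_RI_2} plus ``a sprinkled single-interlacement decoupling'': the events $\{z\in\mathcal{I}^u\}$ are increasing, so FKG gives only the \emph{lower} bound $\ge p_0^{|Z|}$, and the sprinkled decoupling for increasing events reads $\mathbbm{P}^{u^-}[A_1\cap A_2]\le\mathbbm{P}^{u}[A_1]\mathbbm{P}^{u}[A_2]+\text{err}$, i.e.\ it bounds the probability at a \emph{smaller} intensity on the left, not at the fixed level $2u_1$ you need. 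One can salvage the step (e.g.\ by splitting the Poisson process into trajectories hitting exactly one $z_i$, which are independent across $i$, versus a small remainder in $d\ge5$), but this requires a separate argument; alternatively, the paper's stretched-exponential bound suffices. A smaller point, shared with the paper: a cut time only guarantees $X[0,n]\cap X(n,\infty)=\emptyset$, not that past and future are nearest-neighbour non-adjacent, so deleting $X(n)$ need not disconnect the range as a graph; one really needs cut points where past and future are at $\ell^\infty$-distance $\ge2$, which still have positive density in $d\ge5$. Finally, be careful with the reduction to ``one crossing excursion'': excursions of the \emph{same} trajectory inside $B(0,2L_0)$ can touch or be adjacent, so the correct unit is the full trajectory's range in the box (as in the paper), not a single excursion.
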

\begin{proof}[Proof of Proposition \ref{unique_K}]
We use the same method as in the proof of Proposition \ref{unique_V}. We denote by $N$ the number of infinite components. Then, $N$ is a constant a.s., possibly infinite. The proof contains two parts.

The first step is to prove that $\mathbbm{P}[2 \leq N <\infty]=0$. We assume that on the contrary there exists an integer $k \in (1, +\infty)$ such that $\mathbbm{P}[N=k]=1$. Then, there exists $M$ such that $\mathbbm{P}[A]>0$, where $A$ denotes the event that all the $k$ infinite components of $\mathcal{K}$ intersect $K=[-M,M]^d$. We introduce a map $\phi_4:W^* \rightarrow W^*$, which is an identity map on $(W^*_K)^c$. For $\omega \in W^*_K$, the map $\phi_4$ adds a subpath that fills the box $K$ when the first time $\omega$ enters $K$. The map $\phi_4$ induces a map $\Phi_4:\Omega \rightarrow \Omega$ and $\Phi_4 \circ \mathbbm{P}$ is absolutely continuous with respect to $\mathbbm{P}$. Note that in $\Phi_4(A)$ there is exactly one infinite component in $\mathcal{K}$, since the situation outside $K$ is unchanged and all the points in $K$ are occupied. Thus, $\mathbbm{P} [N=1] \geq \mathbbm{P} [\Phi_4(A)] >0$, which is a contradiction. Therefore, $\mathbbm{P} [2 \leq N <\infty]=0$. 

The second step is to prove that $\mathbbm{P}[N=\infty]=0$. We assume that in contrast $\mathbbm{P}[N=\infty]=1$. We can find $K=[-M,M]^d$ and $x_1,x_2,x_3 \in \partial_i K$ such that $\mathbbm{P}[B]>0$ where $B$ represents the event that $K$ intersects three distinct infinite components of $\mathcal{K}$ at $x_1,x_2$ and $x_3$. We consider the maps $\phi_4$ and $\Phi_4$ as before. Then, $\mathbbm{P}[\Phi_4(B)] >0$. In $\Phi_4(B)$, the vertex $0$ is a $M$-trifurcation point (meaning that $B(0,M)$ intersects an infinite component and this infinite component is split into at least three disjoint clusters if we close all the vertices in $B(0,M)$). Thus, the vertex $0$ has a positive probability to be a $M$-trifurcation point. By the Burton-Keane argument, this is impossible. Hence, $\mathbbm{P}[N = \infty]=0$.

In conclusion, either $N$ equals 0 a.s.\ or $N$ equals 1 a.s.
\end{proof}
The proof of Proposition \ref{percolate_K} is similar to that of Theorem 1 in \cite{quenched noise}. We will use the strong connectivity property of random interlacements as shown in Lemma 3.1 of \cite{quenched noise}, which says that all the occupied vertices in a finite box are connected in a slightly larger box with high probability. The idea of this proposition is that in a $[-N,N]^d$ box, every component has larger than $1-\exp(-N^\delta)$ probability to have capacity larger than $N^{(d-2)(1-\delta)}$. Thus, the simple random walks started from them have larger than $1-\exp(-N^\epsilon)$ probability to intersect in a slightly larger box $[-(1+\epsilon)N,(1+\epsilon)N]^d$. The rigorous statement is as follows.

\begin{proposition}
\label{strong_c}
Let $d\geq3$, $\epsilon>0$ and $u>0$. There exist constants $c=c(d,u, \epsilon)>0$ and $C=C(d,u,\epsilon)>0$ such that for all $R \geq 1$,
\begin{equation}
    P\left[\bigcap_{x,y \in \mathcal{I}^u \cap B(0,R)}{x \longleftrightarrow y\mbox{ in }\Tilde{\mathcal{I}}^u} \cap B(0,(1+\epsilon)R)\right] \geq 1-C\exp(-cR^{1/6}),
\end{equation}
where $\Tilde{\mathcal{I}}^u$ denotes the graph induced by the edges visited by random interlacements on $\mathbb{Z}^d$.
\end{proposition}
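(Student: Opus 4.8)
The plan is to follow the proof of Lemma 3.1 of \cite{quenched noise}. Since the event in the proposition is \emph{not} monotone in $u$ (enlarging $u$ adds edges to $\Tilde{\mathcal{I}}^u$, but it also creates new occupied vertices that must be connected), the decoupling inequalities of Proposition \ref{decoupling} are not directly applicable, and one argues by hand, combining standard simple random walk estimates with a union bound. Write $\rho=(1+\epsilon)R$. I would reduce the statement to the two assertions below, each of which holds off an event of probability at most $C\exp(-cR^{1/2})$; this is stronger than, hence implies, the bound claimed (the exponent $1/6$ is not optimal).

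\textbf{Step 1 (every occupied vertex of $B(0,R)$ sits in a macroscopic connected piece of $\Tilde{\mathcal{I}}^u\cap B(0,\rho)$).} Fix $x\in\mathcal{I}^u\cap B(0,R)$ and a trajectory $\gamma$ of the interlacement visiting $x$. By the local description of random interlacements the part of $\gamma$ preceding its first visit to $B(0,R)$ is conditioned to avoid $B(0,R)$, so $x$ is visited only at nonnegative times, and by the strong Markov property the trajectory restarted at $x$ is an ordinary simple random walk $X$. Since $x$ has $l^{\infty}$-distance at least $\tfrac{\epsilon}{2}R$ from $\partial B(0,\rho)$, the exit-time bound $P_x[X\mbox{ exits }B(0,\rho)\mbox{ within }n\mbox{ steps}]\le C\exp(-c(\epsilon R)^2/n)$ with $n=R^{3/2}$ shows that, with probability at least $1-C\exp(-c_\epsilon R^{1/2})$, the walk stays in $B(0,\rho)$ and visits there a connected set of at least $cR^{3/2}$ vertices (using transience in $d\ge3$ and the standard lower bound on the size of a random walk range); by the known lower bounds on the capacity of a random walk range (with harmless logarithmic corrections when $d=3,4$) this set $S_x\subseteq\Tilde{\mathcal{I}}^u\cap B(0,\rho)$ satisfies ${\rm cap}(S_x)\ge R^{1/2}$ for $R$ large. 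A union bound over the at most $CR^{d}$ choices of $x$ gives Step 1.

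\textbf{Step 2 (macroscopic connected pieces are unique).} The number of trajectories of $\mathcal{I}^u$ meeting $B(0,R)$ is Poisson with mean of order $u\,{\rm cap}(B(0,R))\asymp uR^{d-2}$, so it lies in $[c_1R^{d-2},C_1R^{d-2}]$ off an event of probability $\exp(-cR^{d-2})$. For any connected $\mathcal{C}\subseteq\Tilde{\mathcal{I}}^u\cap B(0,\rho)$ with ${\rm cap}(\mathcal{C})\ge R^{1/2}$, a simple random walk run until it leaves $B(0,\rho)$ hits $\mathcal{C}$ with probability at least $c\,{\rm cap}(\mathcal{C})/{\rm cap}(B(0,\rho))$, of order $R^{5/2-d}$; hence a fixed such $\mathcal{C}$ is avoided by all of the trajectories with probability at most $(1-cR^{5/2-d})^{c_1R^{d-2}}\le\exp(-cR^{1/2})$. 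Carrying this out over the at most $CR^{d}$ candidate clusters, and using the argument of Step 1 to see that every trajectory meeting $B(0,R)$ itself carries a macroscopic connected piece, one concludes that off an event of probability at most $C\exp(-cR^{1/2})$ all macroscopic pieces of $\Tilde{\mathcal{I}}^u\cap B(0,\rho)$ --- in particular all of the sets $S_x$ from Step 1 --- lie in a single cluster. Combining Steps 1 and 2 proves the proposition.

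\textbf{Expected main obstacle.} The delicate point is Step 2: the cluster $\mathcal{C}$ one wants to hit is itself a function of the interlacement, so the trajectories cannot simply be treated as independent of it. The standard way around this is a sprinkling argument --- read the macroscopic pieces off $\mathcal{I}^{(1-\delta)u}$ and glue them using the independent extra trajectories of $\mathcal{I}^u\setminus\mathcal{I}^{(1-\delta)u}$ --- together with the bookkeeping required because a trajectory meets $B(0,\rho)$ in a union of excursions rather than in a single connected arc. The remaining ingredients (exit-time tails, lower bounds on the size and capacity of a random walk range, hitting probabilities via the last-exit decomposition, Poisson concentration, polynomial union bounds) are classical and routine.
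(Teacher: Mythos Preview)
The paper does not prove this proposition: it is simply quoted as Lemma~3.1 of \cite{quenched noise} and used as a black box. Your plan to reproduce the proof of that lemma is therefore exactly what the paper implicitly does, and your two-step sketch (each occupied vertex lies on a trajectory excursion of macroscopic capacity inside $B(0,(1+\epsilon)R)$; then glue all such pieces by sprinkling, using that an independent batch of trajectories hits any fixed set of capacity $\ge R^{1/2}$ with probability $1-\exp(-cR^{1/2})$) matches the strategy of \cite{quenched noise}. You have also correctly isolated the only genuine difficulty, namely the dependence between the pieces $\mathcal{C}$ to be hit and the trajectories doing the hitting, and the standard cure via a two-level decomposition $\mathcal{I}^{(1-\delta)u}\subset\mathcal{I}^u$.

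One small inaccuracy worth fixing when you write it out: the sentence ``$x$ is visited only at nonnegative times'' is not literally what the local description says. The anchoring is at the first entrance point $X_0\in\partial_i B(0,R)$, not at $x$; what you actually need is that, by the strong Markov property applied at the first hitting time of $x$ on the positive side, the walk from $x$ onward is an unconditioned simple random walk, which is enough for your exit-time and range/capacity estimates. Also, in Step~2 the trajectories you want to use for gluing visit $B(0,(1+\epsilon)R)$ in several disjoint excursions rather than a single arc, so the hitting-probability lower bound should be phrased for a single excursion (or for the walk stopped on leaving $B(0,(1+\epsilon)R)$); you already flag this bookkeeping issue. With these cosmetic corrections your outline goes through and yields the bound with exponent $1/2$, hence a fortiori with $1/6$.
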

To prove Proposition \ref{percolate_K}, we first need to define seed events and prove estimates about them. Unfortunately, the above event is not monotone. Thus, it cannot be defined as the seed event directly. We will separate it into two monotone events. There will be three seed events: $\overline{E}_x,\overline{F}_x$ and $\overline{G}_x$. When we choose appropriate constants, each of them has small probability to happen.

$E_x$ is defined as the intersection of the following two events: $(1).$ for all $e \in \{0,1\}^d$, the graph $\Tilde{\mathcal{I}}^{u_1}_1 \cap (x+eL_0+[0,L_0)^d)$ contains a connected component with at least $3m({u_1}){L_0}^d/4$ vertices, where $m({u_1})=\mathbbm{P}[0 \in \mathcal{I}^{u_1}_1]=1-e^{-{u_1} \cdot \rm{cap} (0)}$; (2).\ all of the above $2^d$ components are connected in the graph $\Tilde{\mathcal{I}}^{u_1} \cap (x+[0,2L_0)^d)$. The event $F_x$ is defined as: for all $e \in \{0,1\}^d$, ${\mathcal{I}}^{u_1}_1 \cap (x+eL_0+[0,L_0)^d)$ contains at most $5m({u_1}){L_0}^d/4$ vertices. $G_x$ is defined as: $\mathcal{V}^{u_2}_2 \cap (x+[0,2L_0)^d)  = \emptyset$. The events $E_x$ and $G_x$ are increasing and $F_x$ is decreasing. Thus, the complements $\overline{E}_x$, $\overline{G}_x$ are decreasing and $\overline{F}_x$ is increasing. Furthermore, $\overline{E}_x, \overline{G}_x$ and $\overline{F}_x$ are measurable with respect to the configuration in $B(x,2L_0)$ and shift-invariant. Let $\overline{E}_x, \overline{G}_x, \overline{F}_x$ be our seed events. We need the following proposition to prove Proposition \ref{percolate_K}.

\begin{proposition}
\label{seed estimate} 
Given $u_1>0$, there exist integers $l_0,L_0$ and a constant $C=C(u_1)>0$ such that for all $u_2>C$,
\begin{equation}
\label{84}
    \mathbbm{P}\left[\mathcal{K}^{u_1,u_2} \in \overline{E}_{0,n}\right],\mathbbm{P}\left[\mathcal{K}^{u_1,u_2} \in \overline{F}_{0,n}\right],\mathbbm{P}\left[\mathcal{K}^{u_1,u_2} \in \overline{G}_{0,n}\right] \leq 2^{-2^n}.
\end{equation}
\end{proposition}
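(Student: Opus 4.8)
The plan is to bound each of the three quantities $\mathbbm{P}[\mathcal{K}^{u_1,u_2}\in\overline E_0]$, $\mathbbm{P}[\mathcal{K}^{u_1,u_2}\in\overline F_0]$, $\mathbbm{P}[\mathcal{K}^{u_1,u_2}\in\overline G_0]$ directly for a suitable fixed choice of $L_0$, then feed them into the decoupling inequality \eqref{decoup_increase} for the increasing seed events $\overline F_x$, and into \eqref{decoup_decrease} for the decreasing seed events $\overline E_x,\overline G_x$, and finally absorb the combinatorial prefactor $(2l_0+1)^{d\cdot 2^{n+1}}$ by making each single-box probability small enough. Concretely: fix $\epsilon$ (say $\epsilon=\tfrac12$) and let $A=A(d,\epsilon)$ and $l_0=A$ be as in Proposition \ref{decoupling}. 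It suffices to choose $L_0$ (and then a threshold $C=C(u_1)$ for $u_2$) so that
\[
(2l_0+1)^{2d}\Big(\mathbbm{P}[\mathcal{K}^{u_1,u_2}\in\overline H_0]+\epsilon(u_1^-,L_0,l_0)+\epsilon(u_2^-,L_0,l_0)\Big)<\tfrac12
\]
for each of $\overline H\in\{\overline E,\overline F,\overline G\}$ (with the appropriate sprinkled parameters in place of $u_1,u_2$); then \eqref{84} follows from Proposition \ref{decoupling} with $n$ arbitrary, exactly as in the proof of Proposition \ref{nonpercolate_V}.

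The three single-box estimates are handled separately. For $\overline G_0$: since $\overline G_0=\{\mathcal{V}^{u_2}_2\cap(x+[0,2L_0)^d)\neq\emptyset\}$, a union bound over the $(2L_0)^d$ vertices and \eqref{def_RI_2} give $\mathbbm{P}[\mathcal{K}^{u_1,u_2}\in\overline G_0]\le (2L_0)^d e^{-u_2\cdot\mathrm{cap}(0)}$, which is forced below any target by taking $u_2$ large (this is where the constant $C(u_1)$ enters — it also has to dominate the thresholds coming from the other two events, and the $\epsilon(u_2^-,L_0,l_0)$ error term, which likewise decays as $u_2\to\infty$ once $L_0$ is fixed). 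For $\overline F_0$: this is a purely first-interlacement event; $\mathcal{I}^{u_1}_1\cap(x+eL_0+[0,L_0)^d)$ has expected size $m(u_1)L_0^d$, and $\overline F_0$ asks that one of the $2^d$ sub-boxes has size exceeding $\tfrac54 m(u_1)L_0^d$. One bounds $\mathbbm{P}[\mathcal{K}^{u_1,u_2}\in\overline F_0]$ by a union over $e\in\{0,1\}^d$ of a large-deviation estimate for $|\mathcal{I}^{u_1}_1\cap(\text{sub-box})|$; the cleanest route is a Chebyshev/second-moment bound using the known fact that $\mathrm{Var}(|\mathcal{I}^{u_1}_1\cap B|)=O(|B|)$ in $d\ge3$ (correlations of $\{x\in\mathcal I^{u_1}\}$ decay like $|x-y|^{-(d-2)}$ and are summable against one fixed vertex in $d\ge3$), so the deviation probability is $O(L_0^{-d})$ and can be made $<\delta$ by choosing $L_0$ large. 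For $\overline E_0$: this too depends only on $\mathcal{I}^{u_1}_1$; the event $E_0$ holds once (a) each of the $2^d$ sub-boxes contains an $\mathcal{I}^{u_1}_1$-cluster of at least $\tfrac34 m(u_1)L_0^d$ vertices, and (b) these are joined inside $x+[0,2L_0)^d$. Claim (a) together with (b) is implied by the strong connectivity statement Proposition \ref{strong_c} (applied at scale $R\asymp L_0$, with $\epsilon$ small, inside $x+[0,2L_0)^d$): on the strong-connectivity event all occupied vertices of a sub-box lie in one cluster in a slightly larger box, so the largest cluster has at least $|\mathcal I^{u_1}_1\cap(\text{sub-box})|$ vertices, which is $\ge\tfrac34 m(u_1)L_0^d$ off an event of probability $O(L_0^{-d})$ by the same second-moment bound as above; and all $2^d$ clusters merge in $x+[0,2L_0)^d$. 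Hence $\mathbbm{P}[\mathcal{K}^{u_1,u_2}\in\overline E_0]\le C\exp(-cL_0^{1/6})+O(L_0^{-d})$, which $\to0$ as $L_0\to\infty$.

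Putting it together: first choose $L_0$ so large that $\mathbbm{P}[\mathcal{K}\in\overline E_0]$, $\mathbbm{P}[\mathcal{K}\in\overline F_0]$, and $(2L_0)^d$-independent parts are all below $\tfrac{1}{4}(2l_0+1)^{-2d}$ and $\epsilon(u_1^-,L_0,l_0)$ is likewise small (note $u_1^-\asymp u_1$ and $u_1$ is fixed, so this is a matter of enlarging $L_0$); then choose $C=C(u_1)$ so that for all $u_2>C$ both $(2L_0)^d e^{-u_2\cdot\mathrm{cap}(0)}$ and $\epsilon(u_2^-,L_0,l_0)$ are below $\tfrac{1}{4}(2l_0+1)^{-2d}$ as well. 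With these choices the displayed bound above holds for each seed event, and Proposition \ref{decoupling} yields \eqref{84}. The main obstacle is the $\overline E_0$ estimate, since it is the one place where a genuine structural input — the strong connectivity of $\mathcal{I}^{u_1}_1$ in a box (Proposition \ref{strong_c}) — is needed rather than a routine union bound or moment computation; care is required to check that the "large cluster in each sub-box and all sub-clusters merged" description really is implied by a single application of strong connectivity at the right scale, and that the threshold in $u_2$ does not accidentally depend on quantities that still move after $L_0$ is frozen.
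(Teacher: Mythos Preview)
Your approach is essentially the paper's: fix $\epsilon=\tfrac12$ and $l_0=A$, make each single-box seed probability small by enlarging $L_0$ (and then $u_2$ for $\overline G_0$), and feed into Proposition~\ref{decoupling}. The only difference is that the paper obtains the single-box bounds for $\overline E_0$ and $\overline F_0$ by quoting a spatial ergodic theorem for $L_0^{-d}|\mathcal I^{u_1}_1\cap[0,L_0)^d|$ (together with Proposition~\ref{strong_c}, and citing Lemma~4.2 of \cite{quenched noise} for the details), whereas you propose a second-moment computation.

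One correction to that computation: your claim that $\mathrm{Var}\bigl(|\mathcal I^{u_1}_1\cap B|\bigr)=O(|B|)$ because the correlations are ``summable against one fixed vertex in $d\ge 3$'' is false. The covariance of $\mathbbm 1\{x\in\mathcal I^{u_1}\}$ and $\mathbbm 1\{y\in\mathcal I^{u_1}\}$ does decay like $|x-y|^{-(d-2)}$, but $\sum_{1\le |z|_\infty\le L}|z|_\infty^{-(d-2)}\asymp L^2$ in \emph{every} dimension $d\ge 3$ (the summand $r^{-(d-2)}$ against $\asymp r^{d-1}$ points at radius $r$ gives $\sum_{r\le L} r\asymp L^2$), so the variance is of order $L_0^{d+2}$, not $L_0^d$. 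Chebyshev then yields a deviation probability of order $L_0^{-(d-2)}$, not $L_0^{-d}$. This still tends to $0$, so your argument survives with the corrected rate; the paper's ergodic-theorem route simply avoids the computation.
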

\noindent See \eqref{seed} for the definition of $\overline{E}_{0,n}$. One note is that here $\overline{E}_{0,n}$ is the hierarchical event defined by $\overline{E}_x$, not the complement of $E_{0,n}$.

\begin{proof}[Proof of Proposition \ref{seed estimate}]
Take $\epsilon=1/2$ in Proposition \ref{decoupling}. Let $l_0=A$ ($A$ is the integer in Proposition \ref{decoupling}). We begin with $\overline{E}_{0,n}$. By an appropriate ergodic theorem, e.g., Theorem 8.6.9.\ in \cite{ergodic},
\begin{equation}
\label{ergodic}
    \lim_{L_0 \rightarrow \infty}\left[\left|\frac{1}{L_0^d} | \mathcal{I}^u \cap [0,L_0)^d |-m(u)\right|> \delta \right] = 0, \mbox{ for all } \delta>0.
\end{equation}

By Proposition \ref{strong_c} and \eqref{ergodic}, we can prove that for fixed $u_1$ and $u_2$, 
\begin{equation}
\label{85}
    \lim_{L_0 \rightarrow \infty}\mathbbm{P}\left[\mathcal{K}^{u_1,u_2} \in \overline{E}_x\right]=0.
\end{equation}
For more details, one can see Lemma 4.2 in \cite{quenched noise}. Inserting $u_1^+=u_1, u_2^+=u_2, u_1=2u_1/3, u_2=2u_2/3$ into \eqref{decoup_decrease}, we get that
\begin{equation*}
\begin{split}
    &\mathbbm{P}\left[\mathcal{K}^{u_1,u_2} \in \overline{E}_{0,n}\right] \\
    \leq &(2A+1)^{d \cdot 2^{n+1}}\left[\mathbbm{P}\left[\mathcal{K}^{\frac{2}{3}u_1,\frac{2}{3}u_2} \in \overline{E}_0\right]+ \epsilon \left(\frac{2}{3}u_1, L_0, A\right)+ \epsilon \left(\frac{2}{3}u_2, L_0, A\right) \right]^{2^n}.
\end{split}
\end{equation*}
By \eqref{def_epsilon} and \eqref{85}, there exists $\Gamma=\Gamma(u_1,u_2)>0$ such that for $L_0>\Gamma$,
\begin{equation*}
    (2A+1)^{2d}\left(\mathbbm{P}\left[\mathcal{K}^{\frac{2}{3}u_1,\frac{2}{3}u_2} \in \overline{E}_0\right]+ \epsilon \left(\frac{2}{3}u_1, L_0, A\right)+ \epsilon \left(\frac{2}{3}u_2, L_0, A\right)\right) < \frac{1}{2}.
\end{equation*}
Combining the above two inequalities, we have for $L_0>\Gamma$,
\begin{equation*}
    \mathbbm{P}\left[\mathcal{K}^{u_1,u_2} \in \overline{E}_{0,n}\right] \leq 2^{-2^n}.
\end{equation*}
Note that $\overline{E}_{0,n}$ is independent of $\mathcal{I}^{u_2}_2$. Thus, we can let $\Gamma$ independent of $u_2$.

For $\overline{F}_{0,n}$, by \eqref{ergodic}, $\lim_{L_0 \rightarrow \infty}\mathbbm{P}\left[ \mathcal{K}^{u_1,u_2} \in \overline{F}_x\right]=0$. Inserting $u_1^-=u_1,u_2^-=u_2,u_1=2u_1,u_2=2u_2$ into \eqref{decoup_increase}, we get that 
\begin{equation*}
\begin{split}
   & \mathbbm{P}\left[\mathcal{K}^{u_1,u_2} \in \overline{F}_{0,n}\right] \\
    \leq &(2A+1)^{d \cdot 2^{n+1}}\left[\mathbbm{P}\left[\mathcal{K}^{2u_1,2u_2} \in \overline{F}_0\right]+ \epsilon (u_1, L_0, A)+ \epsilon (u_2, L_0, A) \right]^{2^n}.
\end{split}
\end{equation*}
Thus, we can take $\Lambda=\Lambda(u_1,u_2)$ such that for $L_0 \geq \Lambda$,
\begin{equation*}
    \mathbbm{P}\left[\mathcal{K}^{u_1,u_2} \in \overline{F}_{0,n}\right] \leq 2^{-2^n}.
\end{equation*}
Note that $\overline{F}_{0,n}$ is independent of $\mathcal{I}^{u_2}_2$. So, we can let $\Lambda$ independent of $u_2$.

For $\overline{G}_{0,n}$, by \eqref{def_RI_2},
\begin{align*}
    \mathbbm{P}\left[\mathcal{K}^{u_1,u_2} \in \overline{G}_x\right] &= \mathbbm{P}\left[\mathcal{V}^{u_2}_2 \cap (x+[0,2L_0)^d) \neq \emptyset\right]\\
    &\leq \sum_{y \in (x+[0,2L_0)^d)} \mathbbm{P}\left[y \in \mathcal{V}^{u_2}_2\right] = (2L_0)^d e^{-u_2 \cdot \rm{cap} (0)}.
\end{align*}
Inserting $u_1^+=u_1, u_2^+=u_2, u_1=2u_1/3, u_2=2u_2/3$ into \eqref{decoup_decrease}, we get that
\begin{equation}
\label{haha3}
\begin{split}
&\mathbbm{P}\left[\mathcal{K}^{u_1,u_2} \in \overline{G}_{0,n}\right] \\
\leq &(2A+1)^{d \cdot 2^{n+1}} \left[\mathbbm{P}\left[\mathcal{K}^{\frac{2}{3}u_1,\frac{2}{3}u_2} \in \overline{G}_0\right]+ \epsilon \left(\frac{2}{3}u_1, L_0, A\right)+ \epsilon \left(\frac{2}{3}u_2, L_0, A\right) \right]^{2^n}\\
\leq &(2A+1)^{d \cdot 2^{n+1}} \left[(2L_0)^d e^{-\frac{2}{3}u_2 \cdot \rm{cap} (0)}+ \epsilon \left(\frac{2}{3}u_1, L_0, A\right)+ \epsilon \left(\frac{2}{3}u_2, L_0, A\right) \right]^{2^n}.
\end{split}
\end{equation}
There exists an integer $H>\min \{ \Gamma, \Lambda \}$ such that 
\begin{equation*}
    (2A+1)^{2d}\left((2H)^d e^{-\frac{2}{3}H \cdot \rm{cap} (0)}+ \epsilon \left(\frac{2}{3}u_1, H, A\right)+ \epsilon \left(\frac{2}{3}H, H, A\right)\right) < \frac{1}{2}.
\end{equation*}
Take $L_0=H$ and $C(u_1)=H$ in Proposition \ref{seed estimate}. Note that the left-hand side of \eqref{haha3} is decreasing in $u_2$ when $L_0$ is fixed. Thus, for $L_0=H$ and $u_2>C(u_1)$, we have $P\left[\mathcal{K}^{u_1,u_2} \in \overline{G}_{0,n}\right] \leq 2^{-2^n}$. This completes the proof of \eqref{84}.
\end{proof}
\begin{proof}[Proof of Proposition \ref{percolate_K}]
Take the $l_0$, $L_0$ and $C(u_1)$ in Proposition \ref{seed estimate}. We call a vertex $x$ good if $E_x \cap F_x \cap G_x$ occurs, otherwise bad. We claim that if there exists a nearest neighbor path of good vertices with infinite length, then there exists an infinite component of $\mathcal{K}$ along this path. By $G_x$, along this path $\mathcal{V}^{u_2}_2$ is empty. It follows from $E_x$ and $F_x$ that there is only one component with more than $3m(u_1)L_0^d/4$ vertices in each $L_0$ box along the path and all these components are connected with the neighboring ones. Therefore, there is an infinite component in $\mathcal{K}$. If there is no nearest neighbor paths of good vertices with infinite length, then by the dual argument there are infinitely many *-neighbor circuits of bad vertices surrounding $0$.  

By an induction argument (see Lemma 5.2 of \cite{quenched noise}),
\begin{equation*}
\begin{split}
&\mathbbm{P}\left[x \mbox{ is connected to } \partial_i B(x,2L_n) \mbox{ by a *-neighbor path of bad vertices}\right]\\
\leq &4^d \left[\mathbbm{P}\left[\overline{E}_{0,n}\right]+\mathbbm{P}\left[\overline{F}_{0,n}\right]+\mathbbm{P}\left[\overline{G}_{0,n}\right] \right] \leq 12^d  \cdot 2^{-2^n}.
\end{split}
\end{equation*}
Therefore,
\begin{equation*}
    \sum_{x \in \mathbb{Z}^2 \times \{0 \}^{d-2}}\mathbbm{P}\left[Z_x\right] < \infty,
\end{equation*}
where $Z_x$ represents the event that $x$ is passed by a *-neighbor circuit of bad vertices surrounding $0$. Hence, there are finitely many such circuits. Therefore, an infinitely long path of good vertices exists a.s., which means that $\mathcal{K}$ percolates a.s.\ when $u_2>C(u_1)$.
\end{proof}
\begin{proof}[Proof of Proposition \ref{nonpercolate_K}]
Given $K$ a finite subset of $\mathbb{Z}^d$, we write $N^1_K$ and $N^2_K$ for the number of paths passing through $K$ in $\mathcal{I}^{u_1}_1$ and $\mathcal{I}^{u_2}_2$, i.e., $N_K$ in \eqref{def_RI}. We need the following lemma.
\begin{lemma}
\label{4.5}
For $d \geq 3 $, $u_1,u_2>0$ and any integer $M \geq 1$, we have
\begin{equation}
    \lim_{L_0 \to \infty}\mathbbm{P}\left[\partial_i B(0,L_0) \overset{\mathcal{I}^{u_1}_1 \cap \mathcal{I}^{u_2}_2}{\longleftrightarrow} \partial_i B(0,2L_0)|N^1_{B(0,2L_0)}=N^2_{B(0,2L_0)}=M\right]=0.
\end{equation}
\end{lemma}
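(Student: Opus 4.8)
The plan is to condition further on the trajectories and reduce to an estimate about a bounded number of simple random walks issued from $\partial_i B(0,2L_0)$. By the local description \eqref{def_RI}, conditionally on $\{N^1_{B(0,2L_0)}=M\}$ the trace $\mathcal{I}^{u_1}_1\cap B(0,2L_0)$ is the union of the ranges of $M$ i.i.d.\ doubly-infinite trajectories $\eta^1_1,\dots,\eta^1_M$; each $\eta^1_i(0)$ is distributed according to the normalized equilibrium measure of $B(0,2L_0)$, which is supported on $\partial_i B(0,2L_0)$, and conditionally on $\eta^1_i(0)$ the forward part $(\eta^1_i(k))_{k\ge0}$ is an ordinary simple random walk while the backward part is a simple random walk conditioned never to re-enter $B(0,2L_0)$. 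The analogous description holds for $\mathcal{I}^{u_2}_2$ with trajectories $\eta^2_1,\dots,\eta^2_M$, independently of the first family. Two observations reduce the statement to a single sphere. First, if $\partial_i B(0,L_0)\overset{\mathcal{I}^{u_1}_1\cap\mathcal{I}^{u_2}_2}{\longleftrightarrow}\partial_i B(0,2L_0)$ then $\mathcal{I}^{u_1}_1\cap\mathcal{I}^{u_2}_2\cap\partial_i B(0,L_0)\neq\emptyset$, since the endpoint of the crossing path on the inner boundary lies in the intersection. Second, the backward part of $\eta^1_i$ meets $B(0,2L_0)$ only at $\eta^1_i(0)\in\partial_i B(0,2L_0)$, so it never reaches $\partial_i B(0,L_0)$; hence
\begin{equation*}
\mathcal{I}^{u_1}_1\cap\partial_i B(0,L_0)=\bigcup_{i=1}^{M}\Big(\eta^1_i[0,\infty)\cap\partial_i B(0,L_0)\Big),
\end{equation*}
and likewise for $\mathcal{I}^{u_2}_2$, with all contributing pieces coming from genuine simple random walks started on $\partial_i B(0,2L_0)$.

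It therefore suffices to show that the conditional probability of $\{\mathcal{I}^{u_1}_1\cap\mathcal{I}^{u_2}_2\cap\partial_i B(0,L_0)\neq\emptyset\}$ tends to $0$. A union bound over the $M^2$ pairs of trajectories bounds it by $\sum_{i,j=1}^M\mathbbm{P}[\eta^1_i[0,\infty)\cap\eta^2_j[0,\infty)\cap\partial_i B(0,L_0)\neq\emptyset\mid\cdots]$, and for a fixed pair, conditioning on the starting points $\eta^1_i(0)=x$, $\eta^2_j(0)=y\in\partial_i B(0,2L_0)$, the two forward parts are independent simple random walks from $x$ and $y$, so each term equals $P_{x,y}[X^1[0,\infty)\cap X^2[0,\infty)\cap\partial_i B(0,L_0)\neq\emptyset]$. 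When $d=3$ this is at most $C_1/\log(L_0)$ uniformly in $x,y\in\partial_i B(0,2L_0)$ by Proposition \ref{key_estimate} applied with its radius parameter equal to $L_0$; when $d\ge4$ a crude first moment suffices, namely $P_{x,y}[\cdots]\le\sum_{z\in\partial_i B(0,L_0)}P_x[\tau_{\{z\}}<\infty]\,P_y[\tau_{\{z\}}<\infty]\le |\partial_i B(0,L_0)|\,(CL_0^{-(d-2)})^2\le CL_0^{3-d}$, where we used $P_w[\tau_{\{z\}}<\infty]=G(w,z)/G(z,z)\le C|w-z|_\infty^{-(d-2)}$ and $|w-z|_\infty\ge L_0$ for $w\in\partial_i B(0,2L_0)$, $z\in\partial_i B(0,L_0)$. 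In either case the conditional probability is at most $M^2\max\{C_1/\log(L_0),\,CL_0^{3-d}\}\to 0$ as $L_0\to\infty$ with $M$ fixed, which is the claim.

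The main obstacle is the case $d=3$: it is critical for the first-moment heuristic (the bound $L_0^{3-d}$ equals $1$ and does not decay), because on the sphere $\partial_i B(0,L_0)$ two walks have an expected $\Theta(1)$ number of common points, yet any intersection forces a clump of $\Theta(\log L_0)$ of them, so the probability of intersecting at all is only $\Theta(1/\log L_0)$. Capturing this is precisely the content of Proposition \ref{key_estimate}, which is the real input here. A secondary point needing care is the bookkeeping of the conditioning: one must check, directly from \eqref{def_RI}, that conditioning on $\{N^1_{B(0,2L_0)}=N^2_{B(0,2L_0)}=M\}$ and on the starting points leaves the forward halves distributed as independent simple random walks, and recall the standard fact that the equilibrium measure of a box charges only its inner boundary, which is what makes the backward halves irrelevant on $\partial_i B(0,L_0)$; everything else is a union bound together with the elementary Green-function hitting estimate.
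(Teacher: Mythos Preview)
Your proof is correct and follows essentially the same approach as the paper: reduce the crossing event to $\mathcal{I}^{u_1}_1\cap\mathcal{I}^{u_2}_2\cap\partial_i B(0,L_0)\neq\emptyset$, use the local description \eqref{def_RI} to view each interlacement as $M$ simple random walks from $\partial_i B(0,2L_0)$, union bound over the $M^2$ pairs, and then apply the first-moment Green-function bound for $d\ge4$ and Proposition~\ref{key_estimate} for $d=3$. If anything, you are slightly more explicit than the paper in justifying why the backward halves of the trajectories can be ignored on $\partial_i B(0,L_0)$.
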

\begin{proof}
Denote the probability in the left-hand side by $I$. The proof is fairly simple for $d \geq 4$. Equation \eqref{def_RI} implies that conditional on $\left\{ N^1_{B(0,2L_0)}=M\right\}$, interlacements $\mathcal{I}^{u_1}_1$ in $B(0,2L_0)$ can be seen as $M$ independent simple random walks started at some points in $\partial_i B(0,2L_0)$. The same is true for $N^2_{B(0,2L_0)}$ and $\mathcal{I}^{u_2}_2$. To connect $\partial_i B(0,L_0)$ with $\partial_i B(0,2L_0)$, at least one vertex of $\partial_i B(0,L_0)$ should be occupied. There are at most $C{L_0}^{d-1}$ many vertices but the probability of a vertex to be occupied is at most $CM^2\cdot 1/{L_0}^{d-2} \cdot 1/{L_0}^{d-2}$. Thus, 
\begin{align*}
    I &\leq C M^2 {L_0}^{d-1} \cdot \frac{1}{{L_0}^{d-2}} \cdot \frac{1}{{L_0}^{d-2}}= C \frac{1}{L_0^{d-3}}.
\end{align*}
For $d= 3$, we use Proposition \ref{key_estimate}, a more powerful estimate. By Proposition \ref{key_estimate},
\begin{align*}
    I &\leq M^2  \max_{x,y \in \partial_i B(0,2L_0)}P_{x,y}\left[X^1[0,\infty) \cap X^2[0,\infty) \cap \partial_i B(0,L_0) \neq \emptyset\right] \\
    &<  \frac{C}{\log (L_0)}.
\end{align*}
Let $L_0$ tend to $\infty$. Then, both terms above tend to 0 and we complete the proof.
\end{proof}
We define the seed event $G_x$ as the event that $\partial_i B(x,L_0)$ is connected to $\partial_i B(x,2L_0)$ in $\mathcal{K}$. Thus, $G_x$ is measurable with respect to the configuration in $B(x,2L_0)$, shift-invariant and increasing.

Let $u_1=u_2$ and $M=u_1 \cdot {L_0}^{d-2}=u_2 \cdot {L_0}^{d-2}$. Observe that there exists $B$ such that ${\rm cap}(B(0,2L_0)) \leq BL_0^{d-2}$ and $N_{B(x,2L_0)}$ $\sim$ ${\rm Poisson}(u \cdot {\rm cap} (B(0,2L_0)))$. Thus, by the Hoeffding's inequality, there exist two constants $\delta$ and $\gamma$ such that
\begin{equation*}
\begin{split}
    \mathbbm{P}\left[G_x\right] &\leq \mathbbm{P}\left[N^1_{B(x,2L_0)} > 2BM\right] + \mathbbm{P}\left[N^2_{B(x,2L_0)} > 2BM\right]\\
    & \quad +\mathbbm{P}\left[G_x | N^1_{B(x,2L_0)} \leq 2BM,N^2_{B(x,2L_0)} \leq 2BM\right]\\
    &\leq \delta e^{-\gamma M} +\mathbbm{P}\left[G_x | N^1_{B(x,2L_0)} = [2BM],N^2_{B(x,2L_0)} = [2BM]\right]
\end{split}
\end{equation*}
($[x]$ represents the largest integer $\leq x$). Take $\epsilon= 1/2$ in Proposition \ref{decoupling} and $l_0=A$ ($A$ is the integer in Proposition \ref{decoupling}). Inserting $u_1^-=u_1/2, u_1=u_1, u_2^-=u_2/2, u_2=u_2$ into \eqref{decoup_increase}, toghther with \eqref{def_epsilon} we get that
\begin{equation*}
\begin{split}
&\mathbbm{P}\left[\mathcal{K}^{\frac{1}{2}u_1,\frac{1}{2}u_2} \in G_{0,n}\right]\\ 
\leq &(2A+1)^{d \cdot 2^{n+1}} \left[\mathbbm{P}\left[\mathcal{K}^{u_1,u_2} \in G_0\right]+ \epsilon \left(\frac{1}{2}u_1, L_0, A\right)+ \epsilon \left(\frac{1}{2}u_2, L_0, A\right)\right]^{2^n}\\
\leq &(2A+1)^{d \cdot 2^{n+1}}  \Bigg{[} \delta e^{-\gamma M}+4\frac{e^{-\frac{1}{2}M{A}^\frac{d-2}{2}}}{1-e^{-\frac{1}{2}M{A}^\frac{d-2}{2}}}\\
& \quad\quad\quad\quad\quad\quad\quad\quad+ \mathbbm{P}\left[G_x | N^1_{B(x,2L_0)} = [2BM],N^2_{B(x,2L_0)} = [2BM]\right]\Bigg{]}^{2^n}.
\end{split}
\end{equation*}
Next, pick a large $M$ such that 
\begin{equation*}
    (2A+1)^{2d}\left( \delta e^{-\gamma M} + 4\frac{e^{-\frac{1}{2}M{A}^\frac{d-2}{2}}}{1-e^{-\frac{1}{2}M{A}^\frac{d-2}{2}}}\right) < \frac{1}{4}.
\end{equation*}
Finally, thanks to Lemma \ref{4.5}, we can enlarge $L_0$ such that
\begin{equation*}
    (2A+1)^{2d}\mathbbm{P}\left[G_x | N^1_{B(x,2L_0)} = [2BM],N^2_{B(x,2L_0)} = [2BM]\right]<\frac{1}{4}.
\end{equation*}
Combining the above three inequalities, we get that there exist certain $u_1=u_2>0$ and $L_0,l_0$ such that
\begin{equation}
\label{nonpercolate_123}
    \mathbbm{P}\left[\mathcal{K}^{u_1,u_2} \in G_{0,n}\right] \leq 2^{-2^n}.
\end{equation}

We claim that if 0 is connected to $\partial_i B(0,2L_n)$ in $\mathcal{K}$, then the event $G_{0,n}$ happens (see \eqref{seed} for the definition of $G_{0,n}$). We can prove this claim by induction. For $n=0$, this holds immediately. If for $n=k$ it holds, then we consider the case $n=k+1$. For a simple nearest neighbor path connecting $0$ to $\partial_i B(0,2L_{k+1})$, it must pass $\partial_i B(0,L_{k+1}/3)$ and $\partial_i B(0,2L_{k+1}/3)$. By the induction hypothesis, we can prove that the $L_k$ boxes first passed by the path in $\partial_i B(0,L_{k+1}/3)$ and $\partial_i  B(0,2L_{k+1}/3)$ satisfying $G_{x,k}$ and their distance is by definition larger than $L_{k+1}/100$. Hence, the claim holds for $n=k+1$. By induction, it holds for all $n$. So, $\mathbbm{P}\left[0 \overset{\mathcal{K}}{\longleftrightarrow} \partial_i B(0,2L_n)\right] \leq \mathbbm{P}\left[\mathcal{K} \in G_{0,n}\right]$. Together with \eqref{nonpercolate_123}, we have
\begin{equation*}
    \mathbbm{P}\left[0 \overset{\mathcal{K}}{\longleftrightarrow} \partial_i B(0,2L_n)\right]\leq 2^{-2^n}.
\end{equation*}
Let $n$ tend to $\infty$. Then, we have $\mathbbm{P}\left[0 \overset{\mathcal{K}}{\longleftrightarrow}\infty\right]=0$ for some $u_1=u_2>0$. We take $c(d)=u_1=u_2$ in Proposition \ref{nonpercolate_K} and get the non-trivial phase transition of $\mathcal{K}$.
\end{proof}
\begin{remark}
In this proof, we obtain the stretched exponential decay of connectivity function. By the method in Section 7 of \cite{soft local times}, we can greatly improve the bound to exponential decay for $d \geq 4$ and exponential decay with a logarithmic correction for $d=3$ (one can see Remark \ref{exponential decay} for more details). 
\end{remark}
We just proved that when both $u_1$ and $u_2$ tend to $0$, there are no infinite components in $\mathcal{K}$. We conjecture that for $d \geq 3$ if one fix one of $u_1$ and $u_2$ and let the other tend to $0$, there are no infinite components in $\mathcal{K}$. 

We can prove this rigorously for $d \geq 5$. We now present the proof of Proposition \ref{larger than 5}, which gives a precise statement of the claim above.
\begin{proof}[Proof of Proposition \ref{larger than 5}]
It is sufficient to prove a variant of Lemma \ref{4.5} with only one conditioning, i.e., for $d\geq 5$, $u_1,u_2>0$ and any interger $M \geq 1$
\begin{equation*}
     \lim_{L_0 \to \infty}\mathbbm{P}\left[\partial_i B(0,L_0) \overset{\mathcal{I}^{u_1}_1 \cap \mathcal{I}^{u_2}_2}{\longleftrightarrow} \partial_i B(0,2L_0)|N^1_{B(0,2L_0)}=M\right]=0.
\end{equation*}
It follows from \eqref{def_RI} that contional on $\left\{ N^1_{B(0,2L_0)}=M \right\}$, interlacements $\mathcal{I}^{u_1}_1$ in $B(0,2L_0)$ are $M$ independent simple random walks started at some random points in $\partial_i B(0,2L_0)$. Note that
\begin{equation*}
    \lim_{L_0 \to \infty} P[\mbox{no two simple random walks of these } M \mbox{ ones intersect}]=1,
\end{equation*}
for that with high probability their starting points are at least $\sqrt{L_0}$ from each other and random walks started from these points do not intersect when $d \geq 5$. Therefore, we can assume that $M=1$. By the strong Markov property, $\mathcal{I}^{u_1}_1$ in $B(0,2L_0)$ can be dominated by two independent simple random walks from some point $x$ in $\partial_i B(0,L_0)$. With high probability, these two random walks do not intersect out a small box, say $B(x,L_0/2)$. So, we only need to prove that
\begin{equation}
\label{need}
    \lim_{L_0 \to \infty}\sup_{x\in \partial_i B(0,L_0)}P_x\left[ \partial_i B(0,\frac{3}{2}L_0) \overset{ X[0, \infty) \cap \mathcal{I}^{u_2}_2}{\longleftrightarrow} \partial_i B(0,2L_0)\right]=0.
\end{equation}
The time $n$ is called a cut time if $X[0,n] \cap X(n,\infty) = \emptyset$. Call a time $n$ bad if it is a cut time and $X(i)$ is not occupied by $\mathcal{I}^{u_2}_2$, otherwise good. Given $\delta>0$, we can choose $N>0$ and $\epsilon>0$ such that for all integer $L_0 \geq 1$
\begin{equation}
\label{event1}
    P\left[\min_{i>NL_0^2}|X(i)|_{\infty}>4L_0\right]>1-\delta
\end{equation}
and
\begin{equation}
\label{event2}
    P\left[\max_{|i-j| \leq \epsilon L_0^2;i,j \leq NL_0^2}|X(i)-X(j)|_{\infty}<\frac{1}{2}L_0\right]>1-\delta.
\end{equation}
Take $N$ large first and then $\epsilon$ small. This can be proved by the central limit theorem and reflection principle. Furthermore, for arbitrary $N,\epsilon>0$ and $L_0$ large,
\begin{equation}
\label{event3}
    P\left[\mbox{there are no consecutive } \epsilon L_0^2 \mbox{ good times in }\left[0,NL_0^2\right]\right]>1-2\delta.
\end{equation}
Following is the proof of \eqref{event3}. The cut times of $X$ are independent of $\mathcal{I}^{u_2}_2$. Given $NL_0^2$ different points chronologically, for $L_0$ large 
\begin{equation*}
P\left[\mbox{some consecutive }L_0\mbox{ points are all occupied by }\mathcal{I}^{u_2}_2\right]<\delta, 
\end{equation*}
since
\begin{equation*}
    P\left[\mathcal{I}^{u_2} \mbox{ contains } M \mbox{ given points}\right] \leq Ce^{-cM^\frac{d-2}{d}}.
\end{equation*}
Therefore, 
\begin{equation*}
P\left[\mbox{there is a bad time in every consecutive }L_0 \mbox{ cut times}\right]>1- \delta.
\end{equation*}
In addition, for a random walk in $d \geq 5$, the density of cut times will converge to a positive constant as shown in (1) of \cite{cut time}. Combining these two facts, we can get \eqref{event3}.

If the above three events in \eqref{event1}, \eqref{event2} and \eqref{event3} occur at the same time, then the event in \eqref{need} will not happen for that $X$ leaves $B(0,2L_0)$ after $NL_0^2$ and the first $NL_0^2$ steps are cut into disjoint subpaths with diameter smaller than $L_0/2$ by the bad times. Let $\delta$ tend to zero and the proof of \eqref{need} is completed. This completes the proof for the case $d\geq 5$. 
\end{proof}
\begin{remark}
For $d=4$, the bi-infinite simple random walk still has cut times and bad times, so we guess that $\partial_i B(0,L_0)$ and $\partial_i B(0,2L_0)$ can be disconnected by bad times. However, for $d=3$, there are no cut times for a doubly-infinite simple random walk, i.e., $\zeta_3>1/4$ (see (2) to (3) in \cite{cut time}), and a new approach is required.
\end{remark}
\section{On the coexistence of infinite clusters}
\label{section_5}
In this section, we will consider the phase diagram of $\mathcal{K}$ and $\mathcal{V}$ put together and prove Theorem \ref{1.4} which is split into two parts, namely two propositions below.

\begin{proposition}
\label{5.1}
There exists a region in the phase diagram such that $\mathcal{K}$ and $\mathcal{V}$ both have a unique infinite component a.s.
\end{proposition}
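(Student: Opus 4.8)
The plan is to produce the region by simply intersecting the two parameter domains already secured in Sections \ref{section_3} and \ref{section_4}, so that Proposition \ref{5.1} comes out as a corollary of Propositions \ref{unique_V}, \ref{existence_V}, \ref{unique_K} and \ref{percolate_K}. The one thing to keep track of is that $\mathcal{V}$ wants a \emph{small} coordinate whereas $\mathcal{K}$ wants a \emph{large} one, so these constraints must be placed on different coordinates; the symmetry of the model under $u_1 \leftrightarrow u_2$ makes this harmless.

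Concretely, I would first fix any $u_1 \in (0, u_*)$, which is possible because $0 < u_* < \infty$ for $d \geq 3$. For this choice and \emph{every} $u_2 > 0$, the set $\mathcal{V} = \mathcal{V}_1^{u_1} \cup \mathcal{V}_2^{u_2}$ contains $\mathcal{V}_1^{u_1}$, which by \eqref{def_u*} a.s.\ has an infinite component when $u_1 < u_*$; enlarging the set cannot destroy that component, and Proposition \ref{unique_V} rules out more than one. Hence $\mathcal{V}$ a.s.\ has a unique infinite component for all such $(u_1,u_2)$. Then, treating $u_1 \in (0,u_*)$ as fixed, Proposition \ref{percolate_K} furnishes a finite constant $C = C(u_1)$ so that for all $u_2 > C(u_1)$ the set $\mathcal{K}$ a.s.\ has a unique infinite component (uniqueness again via Proposition \ref{unique_K}, which is already incorporated there). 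Therefore the region
\begin{equation*}
    \mathcal{R} = \{(u_1,u_2) : 0 < u_1 < u_*,\ u_2 > C(u_1)\}
\end{equation*}
is nonempty, and on all of $\mathcal{R}$ both $\mathcal{K}$ and $\mathcal{V}$ a.s.\ have a unique infinite component, which is exactly the assertion.

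I do not expect any real obstacle: every input is an already-proved statement, and the argument is pure bookkeeping. The only points deserving a sentence of care are that $u_* \in (0,\infty)$ so that $\mathcal{R}$ is genuinely non-degenerate, and that the two monotone constraints are compatible precisely because they are imposed on the two different coordinates — equivalently, one could also take $u_2 \in (0,u_*)$ and $u_1 > C(u_2)$ by symmetry.
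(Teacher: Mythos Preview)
Your proposal is correct and follows essentially the same approach as the paper's own proof: fix $u_1\in(0,u_*)$ so that $\mathcal{V}$ percolates by Proposition~\ref{existence_V}, and then take $u_2>C(u_1)$ from Proposition~\ref{percolate_K} so that $\mathcal{K}$ percolates as well. The paper's proof is just the one-line version of exactly this argument, and your additional remarks on uniqueness and on $u_*\in(0,\infty)$ merely make the bookkeeping explicit.
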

\begin{proof}
Take $u_1\in (0,u_*)$ and $u_2$ sufficiently large. By Propositions \ref{existence_V} and \ref{percolate_K}, we conclude that both $\mathcal{K}$ and $\mathcal{V}$ have an infinite component.
\end{proof}

Next, we consider whether there is some region such that neither of $\mathcal{K}$ and $\mathcal{V}$ percolates, or equivalently the occupied vertices and the vacant vertices wrap each other. In general, this problem is difficult in low dimensions except $d=2$ due to lack of adequate tools. We claim that in high dimensions there does not exist such region, see the following proposition for a rigorous statement.
\begin{proposition}
\label{5.2}
There exists $D_2<\infty$ such that for all $d>D_2$ and $u_1,u_2>0$ at least one of $\mathcal{K}$ and $\mathcal{V}$ has a unique infinite component a.s. 
\end{proposition}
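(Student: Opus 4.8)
The plan is to combine the high-dimensional lower bound on the interlacement threshold $u_*$ with the two percolation criteria already established — Claim (2) of Theorem \ref{1.1} for $\mathcal{V}$ and Theorem \ref{1.3} for $\mathcal{K}$ — and then to check that the corresponding regions exhaust the quadrant $\{(u_1,u_2):u_1,u_2>0\}$, so that for large $d$ no $(u_1,u_2)$ is left uncovered.

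First I would recall the relevant input from \cite{lower bound} (Theorem 0.1 there): in high dimensions $u_*=u_*(\mathbb{Z}^d)$ grows without bound, so there is a dimension $D_3<\infty$ with $u_*(\mathbb{Z}^d)>1$ for all $d>D_3$. Let $D_1$ be the constant of Theorem \ref{1.3}, and set $D_2=\max\{D_1,D_3\}$. Fix $d>D_2$ and $u_1,u_2>0$.

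Then I would split into two exhaustive cases. If $\min\{u_1,u_2\}\leq 1$, then since $u_*>1$ we have $u_1<u_*$ or $u_2<u_*$, so Claim (2) of Theorem \ref{1.1} gives that $\mathcal{V}$ a.s.\ has a unique infinite component. If instead $u_1>1$ and $u_2>1$, then since $d>D_1$, Theorem \ref{1.3} gives that $\mathcal{K}$ a.s.\ has a unique infinite component. Since these two cases cover every $(u_1,u_2)\in(0,\infty)^2$, this is exactly the statement of Proposition \ref{5.2}, with $D_2$ as above.

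Because all the substantive work — the local stochastic domination of Bernoulli site percolation on the hypercube via Lemma \ref{dominate}, the high-dimensional connectivity analysis, and the renormalization argument — is already carried out in the proof of Theorem \ref{1.3}, and the threshold growth is imported from \cite{lower bound}, the only things to verify here are the exhaustiveness of the case split and the bookkeeping of constants; the genuinely hard part is Theorem \ref{1.3} itself, not this corollary-type deduction. The one point that requires a little care is that one really needs a lower bound on $u_*$ rather than merely on $u_{**}$, since Claim (2) of Theorem \ref{1.1} is phrased in terms of $u_*$ — which is precisely what \cite{lower bound} supplies.
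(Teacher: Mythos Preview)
Your proposal is correct and follows essentially the same approach as the paper: both combine the high-dimensional lower bound on $u_*$ from \cite{lower bound} with Theorem \ref{1.3} and Proposition \ref{existence_V} via an exhaustive case split according to whether $\min\{u_1,u_2\}$ lies below or above a threshold guaranteeing $u_*>1$. The only cosmetic difference is that the paper splits at $u_*(d)$ itself (using $u_*(d)\geq \log(d)/2>1$), whereas you split directly at $1$; the logic is identical.
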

\begin{proof}
The proof relies on Theorem 1.3 and Theorem 0.1 of \cite{lower bound}. By Theorem 0.1 in \cite{lower bound}, we know that 
\begin{equation*}
    \liminf_{d\rightarrow \infty}\frac{u_*(d)}{\log(d)} \geq 1.
\end{equation*}
Thus, there exists a constant $D$ such that for all $d \geq D$, we have $u_*(d) \geq  \log (d) /2$. Theorem \ref{1.3} tells us that when $d \geq D_1$ and $u_1,u_2>1$, the intersection $\mathcal{K}$ percolates. Let $D_2=\min \{D_1,D,10 \}$.

For all $d>D_2$ and $u_1,u_2>0$, there are two cases.

(1).\ $\min \{ u_1,u_2 \} <u_*(d)$. By Proposition \ref{existence_V}, the vacant set $\mathcal{V}$ percolates.

(2).\ $\min \{ u_1,u_2 \} \geq u_*(d)>1$. By Theorem \ref{1.3} and $d>D_2 \geq D_1$, the intersection $\mathcal{K}$ percolates.
\end{proof}
The following part is devoted to the proof of Theorem \ref{1.3}. The proof contains two parts. The first is to prove that in each hypercube $\{ 0,1 \}^d$, with high probability, there is a ubiquitous connected component (meaning that most vertices in the hypercube are connected to it, see below for a rigorous definition) and this ubiquitous component is also connected to those of the neighboring hypercubes (which also exist with high probability). The second step is to prove that such hypercubes with ubiquitous components percolate in the whole space. $H$ represents a hypercube. For $x \in \mathbb{Z}^d$, let $H_x =x+ \{ 0,1 \}^d$ be the hypercube at $x$. Recall that Bernoulli site percolation with parameter $p$ is a model in which each vertex is independently occupied with probability $p$ and vacant with probability $1-p$, denoted by Bernoulli$(p)$.
\begin{proof}[Proof of Theorem \ref{1.3}]
Lemma \ref{dominate} says that $P_0 ({\tau}_{\{ 0,1\} ^d}^+=\infty )>c_1$. Thanks to this Lemma, we claim that the interlacements with intensity $1$ in a hypercube $H$ can dominate Bernoulli site percolation with parameter $1-e^{-c_1^2}$. For a vertex $x$ in $H$, we only consider the paths that pass through $H$ only at $x$ and only once. They have intensity $P_x({\tau}_{H}^+=\infty) \cdot P_x({\tau}_{H}^+=\infty) \geq c_1^2$. Hence, conditional on all the other vertices in $H$, the probability that $x$ is occupied is at least $1-e^{-c_1^2}$. This completes the proof of the claim. Furthermore, $\mathcal{K}^{1,1}$ can dominate Bernoulli site percolation with parameter $(1-e^{-c_1^2})^2$ in $H$. Let $3p=(1-e^{-c_1^2})^2$. It is sufficient to prove the increasing properties about $\mathcal{K}^{1,1}$ in $H$ for Bernoulli site percolation with parameter $3p$.

Next, we will define some notation to be used later. We want to mention that all the inequalities below hold when $d$ is larger than a universal constant and all the constants are independent of $d$. Let $n=2^d=|H|$. For any subset $X$ of a hypercube $H$, write $N(X)$ for all the neighbors of $X$ in $H$ and $\overline{X}$ for $X \cup N(X)$. A connected component of $H$ is called an atom if it contains more than $d^{100}$ vertices. Call a connected component $A$ of $H$ a ubiquitous component if $\left|\overline{A}\right|>\left(1-1/d^2\right)n$. There is only one ubiquitous component in a hypercube $H$. Suppose that in contrast $H$ has two distinct ubiquitous components $A$ and $B$. Since $B \cap \overline{A} = \emptyset$, we have $|B| \leq n-\left|\overline{A}\right| <n/d^2$. Hence,
\begin{align*}
    |\overline{B}|& = |B|+|N(B)| \\
    &\leq |B| +d|B| < \left(1-1/d^2\right)n.
\end{align*}
$B$ is not a ubiquitous component. Therefore, $H$ has at most one ubiquitous component.

The seed event $G_x$ is defined as the intersection of the following two events: (1).\ for any $e \in \{(0,0),(0,1),(0,-1),(1,0),(-1,0) \} \times \{0 \}^{d-2}$, $\mathcal{K}^{1,1} \cap H_{x+e}$ has a ubiquitous component; (2).\ all the above five ubiquitous components are connected in $\mathcal{K}^{1,1}$. The event $G_x$ is measurable with respect to the configuration in $B(x,1)$, shift-invariant and increasing. $\overline{G}_x$ is the complement of $G_x$. We first prove the following inequality: 
\begin{equation}
\label{highdimen_local}
    \lim_{d \rightarrow \infty}d^3\mathbbm{P}\left[\overline{G}_x\right]=0.
\end{equation}

It is sufficient to prove this property for Bernoulli site percolation with parameter $3p$. Now, $H_x$ is a fixed hypercube and $H_1, H_2, H_3, H_4$ are the four neighboring hypercubes of $H_x$ in the first and second directions. The proof follows three steps. With high probability, (1).\ in Bernoulli(p), most vertices in $H_x$ have a neighbor in an atom; (2).\ in Bernoulli(2p), these atoms are connected to a ubiquitous component in $H_x$; (3).\ in Bernoulli(3p), this ubiquitous component is connected with those of the neighboring four hypercubes. If the above three events happen simultaneously, then $G_x$ happens. 

Consider Bernoulli site percolation on $H_x$ with parameter $p$. For a fixed vertex of $H_x$, we can construct a 1000-high tree in $H_x$ in which every node except leaves has more than $d/2000$ descendants. By the Hoeffding's inequality, $P[Y] \geq 1-e^{-Cd}$, where $Y$ represents the event that each node except leaves has more than $pd/10000$ occupied descendants. When $Y$ happens, this vertex has a neighbor in an atom. Thus, by the Markov's inequality
\begin{equation*}
    P[Z] \geq 1-e^{-Cd},
\end{equation*}
where $Z$ represents the event that except $n/e^{Cd}$ vertices, every vertex of $H_x$ has a neighbor in an atom.

Consider, now, the set of atoms obtained in $H_x$. We open all the vacant vertices independently with probability $q=p/(1-p)$. With these additional open vertices, the atoms in $H_x$ have a large probability to be connected to a ubiquitous component. We claim that with high probability no union of atoms $A$ covering more than $n/d^5$ vertices can be separated from the union of all the other atoms $B$, when $B$ also has at least $n/d^5$ vertices. This follows from the following lemma.
\begin{lemma}
Assume that $Z$ happens and $A, B \subset H_x$ satisfy the above conditions including $|A|,|B|>n/d^5$ and $A \cap \overline{B} = \overline{A} \cap B= \emptyset$. Then, there exist $cn/d^7$ pairwise disjoint paths connecting $A$ and $B$, which have length at most three, i.e., in the form $A\leftrightarrow y \leftrightarrow B$ or $A \leftrightarrow y \leftrightarrow z \leftrightarrow B$.
\end{lemma}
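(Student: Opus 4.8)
The plan is to reduce the lemma entirely to the edge–isoperimetric inequality on $\{0,1\}^d$ together with an elementary matching argument, so that no sharp vertex–isoperimetric estimate is needed. Write $n=2^d=|H_x|$; for $S\subseteq H_x$ let $\partial_e S$ be the set of edges of $H_x$ with exactly one endpoint in $S$, and for $S,T\subseteq H_x$ let $e(S,T)$ be the number of edges joining $S$ to $T$. Put $U=N(A)\setminus A$ and $V=N(B)\setminus B$. Since $A$ and $B$ are disjoint with no edge between them, $U$ and $V$ are disjoint from $A\cup B$; and since $A$ and $B$ together are the union of all atoms, no vertex of $U\cup V$ lies in any atom, so the interior vertices of all the paths we shall produce are either vacant or in a small component, as the sprinkling step requires. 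Decompose
\[
H_x=A\ \sqcup\ B\ \sqcup\ (U\setminus V)\ \sqcup\ (V\setminus U)\ \sqcup\ (U\cap V)\ \sqcup\ R_0,
\]
where $R_0$ consists of the vertices outside $A\cup B$ with no neighbour in $A\cup B$. Because $A\cup B$ is the union of all atoms, $R_0$ is contained in the exceptional set of the event $Z$, so $|R_0|\le ne^{-Cd}$.

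First I would set $\widehat A=A\cup(U\setminus V)\cup(U\cap V)$ and $\widehat B=B\cup(V\setminus U)$, so that $\widehat A$ and $\widehat B$ are disjoint and $\widehat A\sqcup\widehat B\sqcup R_0=H_x$, with $n/d^5\le|\widehat A|,|\widehat B|\le n-n/d^5$. The edge–isoperimetric inequality in the hypercube gives $|\partial_e\widehat A|\ge\min(|\widehat A|,\,n-|\widehat A|)\ge n/d^5$, and every edge of $\partial_e\widehat A$ that does not join $\widehat A$ to $\widehat B$ joins $\widehat A$ to $R_0$, hence $e(\widehat A,\widehat B)\ge n/d^5-d|R_0|\ge n/(2d^5)$ once $d$ is large. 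The key combinatorial point is then that, by the definitions of $U$ and $V$ and the hypothesis that there is no edge between $A$ and $B$, every edge counted in $e(\widehat A,\widehat B)$ is either an edge between $U\setminus V$ and $V\setminus U$ or an edge incident to $U\cap V$; consequently
\[
e(U\setminus V,\,V\setminus U)+d\,|U\cap V|\ \ge\ \frac{n}{2d^5}.
\]

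It remains to convert each alternative into many short paths with pairwise disjoint interiors. If $|U\cap V|\ge n/(4d^6)$, then letting the midpoint $y$ run over distinct vertices of $U\cap V$ produces at least $n/(4d^6)$ length‑two paths $A\leftrightarrow y\leftrightarrow B$ with disjoint interiors. Otherwise $e(U\setminus V,\,V\setminus U)\ge n/(4d^5)$; this bipartite graph has maximum degree at most $d$ on each side, so it contains a matching of size at least $n/(8d^6)$, and each matching edge $\{y,z\}$ with $y\in U\setminus V$ and $z\in V\setminus U$ yields a length‑three path $A\leftrightarrow y\leftrightarrow z\leftrightarrow B$, the interiors of distinct such paths being disjoint. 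In either case one obtains at least $n/(8d^6)\ge cn/d^7$ paths of length at most three with pairwise disjoint interiors once $d$ exceeds a universal constant, which is the assertion of the lemma (should fully disjoint paths be wanted one can thin out by a further factor $d^2$, or simply note that $A$ and $B$ are each connected so that only the interiors matter for the sprinkling).

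I expect the only genuine obstacle to be the combinatorial point in the second paragraph — identifying exactly which edges can lie in $\partial_e(\widehat A,\widehat B)$ and keeping track of the loss $d|R_0|$ coming from the exceptional set of $Z$; the remaining ingredients, namely the hypercube edge–isoperimetric inequality and the fact that a bounded–degree bipartite graph has a large matching, are standard, and the deliberate use of the \emph{edge} version rather than the vertex version is what keeps the argument clean in the regime $|A|,|B|\asymp 2^d/d^5$.
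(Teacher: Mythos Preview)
Your argument is correct. Both your proof and the paper's proof follow the same overall skeleton—apply an isoperimetric inequality to $\overline A$ (your $\widehat A$ is exactly $\overline A$), then convert the resulting boundary into many short $A$–$B$ paths with disjoint interiors—but the technical implementations differ. The paper splits into cases according to whether $|N(A)\cap N(B)|$ exceeds $n/d^7$; in the small case it invokes the \emph{vertex}-isoperimetric bound $|N(\overline A)|\ge |\overline A|(n-|\overline A|)/(nd)$, uses $Z$ to force most $z\in N(\overline A)$ to have a neighbour in $B$, and then thins by a factor $d$ via a greedy degree-count on the intermediate vertex $y$. You instead apply the \emph{edge}-isoperimetric inequality $|\partial_e \widehat A|\ge\min(|\widehat A|,n-|\widehat A|)$, classify the boundary edges combinatorially to obtain $e(U\setminus V,V\setminus U)+d\,|U\cap V|\ge n/(2d^5)$, and then split on $|U\cap V|$, using a maximum-matching bound in the bipartite case rather than a greedy selection. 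Your route has the advantage that the two cases are structurally parallel and the edge bound $|\partial_e S|\ge\min(|S|,n-|S|)$ is the most elementary hypercube estimate available; the paper's route is a touch shorter in Case~2 since each vertex of $N(\overline A)$ directly yields a path. Either way the loss is at most a factor $d^2$ over the isoperimetric input, landing safely at $cn/d^7$.
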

\begin{proof}
There are two cases. Case $1$: $|N(A) \cap N(B)| >n/d^7$, then take $A\leftrightarrow y \leftrightarrow B$, where $y \in N(A) \cap N(B)$. Case $2$: $|N(A) \cap N(B)| <n/d^7$. Suppose that $\left|\overline{A}\right|\leq \left|\overline{B}\right|$, then we can prove that $\left|\overline{A}\right|<3n/4$. With the isoperimetric inequality, $\left|N\left(\overline{A}\right)\right|>\left|\overline{A}\right|\left(n-\left|\overline{A}\right|\right)/(nd)>cn/d^6$. In addition, by $Z$, all the vertices in $N\left(\overline{A}\right)$ except $n/e^{Cd}$ ones should have a neighbor in $B$. Thus, there are at least $cn/d^6$ different paths in the form $A \leftrightarrow y  \leftrightarrow B$ or $A \leftrightarrow y \leftrightarrow z  \leftrightarrow B$, where $y \in N(A)$ and $ z\in N\left(\overline{A}\right)$. Since $y$ can be in at most $d$ different paths, there are at least $cn/d^7$ disjoint paths.
\end{proof}
The number of choices of $A$ and $B$ satisfying the above conditions is at most $2^{n/d^{100}}$. By the above lemma, each pair has smaller than ${(1-q^2)}^{cn/d^7}$ probability to be still disconnected. So, the probability of existing such $A$ and $B$ is at most ${(1-q^2)}^{cn/d^7} \cdot 2^{n/d^{100}}$. If there are no such pairs of $A$ and $B$, we obtain a ubiquitous component (because the vertices not in this component is at most $n/e^{Cd}+d n/d^5$). Therefore,
\begin{equation*}
P[U] \leq  e^{-Cd} +{\left(1-q^2\right)}^{cn/d^7} 2^{n/d^{100}},
\end{equation*}
where $U$ represents the event that there are no ubiquitous components in $H_x$. By symmetry,
\begin{equation*}
P[V] \leq  5\left(e^{-Cd} +{\left(1-q^2\right)}^{cn/d^7} 2^{n/d^{100}} \right),
\end{equation*}
where $V$ represents the event that there are no ubiquitous components in $H_1$, $H_2$, $H_3$, $H_4$ or $H_x$.

Suppose that $V$ does not happen and we obtain five ubiquitous components in these five hypercubes. The final step is to connect the ubiquitous component in $H_x$ with the neighboring four ones. We open all the vacant vertices independently with probability $r=p/(1-2p)$. Note that there are $2^{d-1}$ common vertices for the neighboring two hypercubes $H_x$ and $H_1$ and at least $2^{d-1}-2n/d^2>cn$ of them are connected to both the two ubiquitous components in $H_x$ and $H_1$. Thus, with probability more than $1-(1-r)^{cn}$, the two ubiquitous components in $H_x$ and $H_1$ are connected with each other, the same for $H_2$, $H_3$ and $H_4$.

Summing over all the probabilities of bad events, we have
\begin{equation*}
    P\left[\overline{G}_x\right] \leq 5 \left( e^{-Cd} +{\left(1-q^2\right)}^{cn/d^7}\cdot 2^{n/d^{100}} \right)+ 4 (1-r)^{cn}.
\end{equation*}
This completes the proof of \eqref{highdimen_local}.

\begin{remark}
The most costly step is the first one. It is easy to find that this argument also holds when $u=\frac{1}{d^{1/2-\epsilon}}$ for any $\epsilon >0$.
\end{remark}

For the second part, we directly use Theorem 2.2 in \cite{lower bound}, i.e.,
\begin{proposition}
$G_x$ is measurable with respect to the configuration in $B(x,1)$ and shift-invariant. If $\limsup_{d}d^3\mathbbm{P}\left[\overline{G}_x\right]<\infty$, then $G_x$ will percolate in the slab $\mathbb{Z}^2 \times \{ 0 \} ^{d-2}$ for some large $d$ meaning that there exists an infinite long nearest neighbor path in which every vertex satisfies $G_x$.
\end{proposition}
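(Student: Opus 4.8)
The plan is to prove this exactly as the non-percolation statements of Sections~\ref{section_3} and \ref{section_4} are proved, i.e.\ as a local-to-global (renormalization) statement, the only new features being that the ambient graph is the two-dimensional slab $\mathbb{Z}^2\times\{0\}^{d-2}$ and that $\overline{G}_x$ is a local functional of $\mathcal{K}^{1,1}=\mathcal{I}^1_1\cap\mathcal{I}^1_2$. A classical finite-dependent percolation comparison (Liggett--Schonmann--Stacey) is unavailable, since $\mathcal{I}^u$ has long-range correlations and hence $(\overline{G}_x)_x$ is not a finitely dependent family; instead I would run the sprinkled decoupling of Proposition~\ref{decoupling} along a renormalization scheme that lives \emph{inside} the slab. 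Keeping the scheme two-dimensional is the crucial point: it makes the entropy prefactor polynomial in $d$ rather than of order $(2l_0+1)^{\Theta(d\,2^n)}$, which would overwhelm the (minuscule) seed probability.

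Concretely: fix $\epsilon\in(0,1)$, put $u_1=u_2=(1+\epsilon)^{-1}$, $L_0=1$, $l_0=A(d,\epsilon)$, $L_n=l_0^n$, and, for $x$ in the slab, let $\overline{G}_{x,n}$ be the hierarchical events defined by the two-dimensional analogue of \eqref{seed}: a dyadic tree of scale-$(n-1)$ boxes taken from $\big(L_{n-1}(\mathbb{Z}^2\times\{0\}^{d-2})\big)\cap B(x,L_n)$ at mutual $\ell^\infty$-distance $>L_n/100$. Since $\overline{G}_x$ is a decreasing seed event measurable with respect to $B(x,1)\subset B(x,2L_0)$, and since the sprinkling coupling underlying Proposition~\ref{decoupling} depends only on the $d$-dimensional boxes attached to the tree and on their pairwise separation, not on the lattice indexing the scheme, the very same proof yields, with a numerical constant $C$ in place of $d$ in the prefactor exponent,
\[
\mathbbm{P}\!\left[\mathcal{K}^{1,1}\in\overline{G}_{0,n}\right]\ \le\ (2l_0+1)^{C\cdot 2^{n+1}}\Big[\,\mathbbm{P}\!\left[\mathcal{K}^{(1+\epsilon)^{-1},(1+\epsilon)^{-1}}\in\overline{G}_0\right]+2\,\epsilon\!\left((1+\epsilon)^{-1},1,l_0\right)\Big]^{2^n}.
\]

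The local probability is where the hypothesis enters. The hypercube analysis that produced \eqref{highdimen_local} goes through verbatim with intensity $(1+\epsilon)^{-1}>\tfrac12$ in place of $1$: Lemma~\ref{dominate} still gives domination of $\mathcal{I}^{(1+\epsilon)^{-1}}$ inside a hypercube by Bernoulli site percolation with the positive parameter $1-e^{-c_1^2/(1+\epsilon)}$, and so the three-step argument (most vertices have a neighbor in an atom; the atoms merge into a ubiquitous component; the ubiquitous components of neighboring hypercubes glue) again gives $\limsup_d d^3\,\mathbbm{P}[\mathcal{K}^{(1+\epsilon)^{-1},(1+\epsilon)^{-1}}\in\overline{G}_0]<\infty$, in particular this probability is $o(1)$. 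Since $\epsilon((1+\epsilon)^{-1},1,l_0)$ is doubly-exponentially small in $d$ and the prefactor base $(2l_0+1)^{C}$ is only polynomial in $d$ for fixed $\epsilon$, for all large $d$ the bracket times this base is $<\tfrac12$, so $\mathbbm{P}[\mathcal{K}^{1,1}\in\overline{G}_{0,n}]\le 2^{-2^n}\to 0$. To convert this into percolation I would use the claim recalled after \eqref{seed}, which holds verbatim in the slab (induction on $n$): if a *-neighbor path of $\overline{G}$-vertices joins $0$ to $\partial_i B(0,L_n)$ within the slab, then $\overline{G}_{0,n}$ occurs. Hence the event that $0$ lies in an infinite *-cluster of $\overline{G}$-vertices in the slab is contained in $\bigcap_n\overline{G}_{0,n}$ and is therefore null; by translation invariance the $\overline{G}$-vertices a.s.\ have no infinite *-cluster in the slab, and planar duality in $\mathbb{Z}^2$ then forces the $G$-vertices to contain an infinite nearest neighbor path a.s., which is the assertion.

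The main obstacle is the interplay of the two large parameters. The renormalization must be kept two-dimensional so that the combinatorial prefactor is $(2l_0+1)^{O(2^n)}$ and not $(2l_0+1)^{\Theta(d\,2^n)}$ --- the latter would dominate the seed probability however small it is --- while at the same time the sprinkling forces the passage from intensity $1$ to $(1+\epsilon)^{-1}$, and one must verify this does not spoil the hypercube estimate; it does not, because $(1+\epsilon)^{-1}$ stays bounded away from $0$ and the hypercube analysis uses nothing about the dominated Bernoulli parameter beyond a positive lower bound. The $d^3$ in the hypothesis is a comfortable margin for this polynomial-in-$d$ bookkeeping; the seed probability in fact decays much faster in $d$, and all one genuinely needs is that $\mathbbm{P}[\overline{G}_x]\to 0$ quickly enough to beat the prefactor.
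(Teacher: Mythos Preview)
The paper does not prove this proposition at all: it is quoted verbatim as Theorem~2.2 of Sznitman~\cite{lower bound}, and immediately afterwards the paper remarks that ``the proof of this result uses a direct decoupling inequality rather than the sprinkling version, so we do not even need monotonicity of the seed event.'' So your proposal is not a reconstruction of the paper's argument but a genuinely different route.

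The difference matters in two ways. First, the proposition is stated for an \emph{arbitrary} seed event $G_x$ measurable in $B(x,1)$, with no monotonicity assumption and with the hypothesis $\limsup_d d^3\,\mathbbm{P}[\overline{G}_x]<\infty$ only at the fixed pair of intensities $(1,1)$. Sznitman's direct decoupling bounds the covariance of events in well-separated boxes by a polynomially small (in the separation) error without changing $u$; this is why no monotonicity is needed and why the hypothesis at $(1,1)$ alone suffices. Your sprinkled decoupling forces the passage from $(1,1)$ to $((1+\epsilon)^{-1},(1+\epsilon)^{-1})$, and you then have to re-derive the seed estimate at the lower intensity by rerunning the hypercube analysis. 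That is fine for the particular $G_x$ of Theorem~\ref{1.3}, but it means you have not proved the proposition as stated---you have proved an ad hoc version tailored to this $G_x$. What the direct decoupling buys is exactly this generality and the clean ``hypothesis at one intensity'' formulation.

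Second, there is a genuine loose end in your bookkeeping. You set $l_0=A(d,\epsilon)$ and then assert that $(2l_0+1)^{C}$ is polynomial in $d$. Proposition~\ref{decoupling} gives no information on how $A(d,\epsilon)$ grows with $d$; the $d$-dependence of the sprinkling threshold is hidden in the capacity and hitting estimates underlying the coupling and is not discussed anywhere in the paper. Sznitman's proof in~\cite{lower bound} is written precisely so that all renormalization constants are controlled uniformly (or polynomially) in $d$, and this is what makes the innocent-looking factor $d^3$ in the hypothesis the right currency. If you want to push the sprinkling route through, you would have to go back to the coupling and extract an explicit polynomial bound on $A(d,\epsilon)$, which is extra work not present in your outline.
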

The proof of this result uses a direct decoupling inequality rather than the sprinkling version, so we do not even need monotonicity of the seed event. Observe that $u \leq d$ also holds here.

Once there exists an infinitely long nearest neighbor path in which every vertex satisfies $G_x$, there is an infinite component of $\mathcal{K}$ along this path, because every hypercube along this path has a ubiquitous component and all these components are connected with the neighboring four ones.
\end{proof}

\renewcommand{\abstractname}{Acknowledgements}
\begin{abstract}
The author wishes to thank Xinyi Li for suggesting the problem and for useful discussions and a careful reading of the manuscript.
\end{abstract}

\end{document}